\newtheoremstyle{standard}
{16pt} %Abstand nach oben
{16pt} %Abstand nach oben
{} %Schrift Text
{} %Einrücken
{\bfseries}%\normalfont} %Schrift Kopf
{} %Zeichen nach Kopf
{ } %Abstand
{{\thmname{#1~}}{\thmnumber{#2.}}\thmnote{~(#3)}} %Format
\newtheoremstyle{kursiv}
{16pt} %Abstand nach oben
{16pt} %Abstand nach oben
{\itshape} %Schrift Text
{} %Einrücken
{\bfseries}%\normalfont} %Schrift Kopf
{} %Zeichen nach Kopf
{ } %Abstand
{{\thmname{#1~}}{\thmnumber{#2.}}\thmnote{~(#3)}} %Format
\theoremstyle{standard}
\newtheorem{defn} [subsection]{Definition}
\newtheorem{ex} [subsection]{Example}
\newtheorem{rem} [subsection]{Remark}
\newtheorem{setup} [subsection]{}
\theoremstyle{kursiv}
\newtheorem{thm}[subsection]{Theorem}
\newtheorem*{mainthm1}{First main theorem}
\newtheorem*{mainthm2}{Second main theorem}
\newtheorem{prop} [subsection]{Proposition}
\newtheorem{cor} [subsection]{Corollary}
\newtheorem{lem} [subsection]{Lemma}
\newtheorem{hp}[subsection]{Assumption}
\newcommand{\cA}{\ensuremath{\mathcal{A}}}
\newcommand{\cB}{\ensuremath{\mathcal{B}}}
\newcommand{\cF}{\ensuremath{\mathcal{F}}}
\newcommand{\cG}{\ensuremath{\mathcal{G}}}
\newcommand{\cM}{\ensuremath{\mathcal{M}}}
\newcommand{\cN}{\ensuremath{\mathcal{N}}}
\newcommand{\dd}{\mathop{}\!\mathrm{d}}
\newcommand{\R}{\ensuremath{\mathbb{R}}}
\newcommand{\N}{\ensuremath{\mathbb{N}}}
\DeclareMathOperator{\id}{id}
\DeclareMathOperator{\Diff}{Diff}
\DeclareMathOperator{\comp}{comp}
\DeclareMathOperator{\one}{\mathbf{1}}
\newcommand{\Dmu}[1][s]{\mathrm{Diff}^{#1}_\mu(K)}
\newcommand{\tr}{\mathrm{tr}}
\newcommand{\lan}{\langle}
\newcommand{\ran}{\rangle}
\newcommand{\VFsmu}[1][s]{\mathfrak{X}^{#1}_\mu (K)}
\newcommand{\VFslmu}{\mathfrak{X}^{s+\ell}_\mu (K)}
\newcommand{\Frechet}{Fr\'{e}chet }
\newcommand{\coloneq}{\colonequals}
\date{}
\title{Incompressible Euler equations with stochastic forcing: a geometric approach}
\author{Mario Maurelli\footnote{Dipartimento di Matematica `Federigo Enriques', Universit\`{a} degli Studi di Milano, via Saldini 50, 20133 Milano, Italy, \href{mailto:mario.maurelli@unimi.it}{mario.maurelli@unimi.it}}, Klas Modin\footnote{Department of Mathematical Sciences, Chalmers and University of Gothenburg, Chalmers Tvargata, SE-41296, Gothenburg, Sweden, \href{mailto:klas.modin@chalmers.se}{klas.modin@chalmers.se}}, Alexander Schmeding\footnote{Department of Mathematics, Universitet i Bergen, All\'{e}gate 41, 5020 Bergen, Norway \href{mailto:alexander.schmeding@uib.no}{alexander.schmeding@uib.no}.}}
\begin{document}

\maketitle

\begin{abstract}
We consider a stochastic version of Euler equations using the infinite-dimensional geometric approach as pioneered by Ebin and Marsden \cite{EM70}. For the Euler equations on a compact manifold (possibly with smooth boundary) we establish local existence and uniqueness of a strong solution in spaces of Sobolev mappings (of high enough regularity). Our approach combines techniques from stochastic analysis and infinite-dimensional geometry and provides a novel toolbox to establish local well-posedness of stochastic non-linear partial differential equations. 
% For the readers convenience we included extensive background material on stochastic analysis and geometry on infinite-dimensional Hilbert manifolds.   
\end{abstract}

%\medskip

\textbf{MSC2010:} 
35Q31 (primary);  % Euler equations
60H15, %Stochastic partial differential equations
76B03, %Incompressible inviscid fluids: Existence, uniqueness, and regularity theory 
58D15, % global analysis: manifolds of mappings
58B25 % Group structures and generalizations on infinite-dimensional manifolds
\\[2.3mm]

\textbf{Keywords:}
stochastic Euler equation, half-Lie group, manifold of Sobolev\\ mappings, Ebin-Marsden theory, stochastic integration on Hilbert manifolds  
\newpage
\tableofcontents

\section*{Introduction and main result}

Since formulated in 1757~\cite{Eu1757}, Euler's equations for the motion of an inviscous incompressible fluid have had a profound role in science; in geophysics, in meteorology, in aerospace engineering, in astrophysics, and, of course, in mathematics where advanced techniques for existence and uniqueness in 2D and 3D provide important mathematical tools and new theoretical insights.

% Solutions to the Euler equations can exhibit highly complicated behavior. 
% The development of Karman vortex streets for a flow past a cylinder is a good example.
% At the same time, Euler's fluid model is very simple: in words, the fluid particles are moving as best they can along straight lines (geodesics) without violating the incompressibility constraint.
To accommodate external influence for which a precise model is missing, it is natural to consider stochastic versions of the Euler equations.
The main result in this paper is a new framework for local existence and uniqueness of stochastic nonlinear partial differential equations (PDEs) of hydrodynamic type\footnote{Although we here consider stochastic versions of the classical Euler equation, the framework is general enough for the wider class of \emph{Euler--Arnold equations}, c.f.\ Section~\ref{sect:above_beyond}.} evolving on compact manifolds. 
A cornerstone in the analysis of deterministic hydrodynamic PDEs is the infinite-dimensional geometric theory developed in 1970 by Ebin and Marsden~\cite{EM70}.
Our framework constitutes an extension of this theory to stochastic PDEs.

We introduce noise as \emph{stochastic forcing}.
That is, the noise is an external fluctuating force acting on the fluid particles. The force is assumed to have Gaussian distribution, uncorrelated in time but correlated in space: the latter condition means roughly that, at any time, nearby fluid particles should experience nearly the same force. 
As a first case, we consider here \emph{additive noise} in the forcing, i.e.\ noise not depending on the solution itself.
We leave the more general case of \emph{multiplicative noise} for future investigation.

Early works on Euler equations with stochastic forcing are mostly in the two-dimensional (2D) case \cite{Bes1999,BesFla1999,BrzPes2001}. The three-dimensional (3D) case has been treated in \cite{MR1747615,Kim2009, Glatt-Holtz-Vicol-2014}. In particular, in \cite{Glatt-Holtz-Vicol-2014} local existence and uniqueness among smooth solutions in 2D and 3D domains is proved for a wide class of noises.
% the noise can be additive or \emph{multiplicative}, that is, the noise amplitude is a function, of quite general form, of the solution. 
Let us also mention that, complementary to stochastic forcing, \emph{transport noise} offers a different way to introduce noise in the Euler equations: here one considers the vorticity formulation and noise is added in the Poisson equation relating the stream function and the vector field, see e.g.\ \cite{CFH17,MR3325187,BrzFlaMau2016}. Other types of noises are also possible, like non-Gaussian noises (for example L\'evy noises) or not time-uncorrelated noises (for example rough paths). 

In the aforementioned work, the analysis is based on PDE techniques combined with stochastic analysis.
Our approach is different; it is based on the infinite-dimensional geometric technique first devised by Arnold~\cite{Ar1966}, who discovered that solutions to the deterministic Euler equations can be interpreted as geodesic curves on the infinite-dimensional configuration manifold of volume preserving diffeomorphisms equipped with a right-invariant Riemannian metric.
Ebin and Marsden~\cite{EM70} thereafter used Arnold's geometric viewpoint to obtain local well-posedness of the deterministic equations, including smooth dependence on intial conditions.
As the streamlined presentation in \cite{Eb2015} shows, their strategy is to prove that the Lagrangian formulation of the Euler equations, as a second order system on the tangent bundle of a Hilbert manifold of diffeomorphisms, is a smooth, infinite-dimensional ordinary differential equation (ODE). Once this is achieved, standard Picard iterations yield the local well-posedness (since the finite-dimensional ODE analysis extends to Banach spaces). Let us highlight here that this approach is not just a formal way to view PDEs as equations in infinite dimension: this ODE on the Hilbert manifold is driven by a \emph{smooth} velocity field and not by an unbounded operator.

Extending the Ebin and Marsden framework to a stochastic setting requires us to deal with stochastic differential equations (SDEs) on infinite-dimensional manifolds. A theory for SDEs, in the case of Hilbert manifolds, appears in \cite{Elw82}. It is based on Stratonovich integration, a stochastic integral invariant under change of charts, and can be extended to Banach manifolds, cf.\ \cite{BaE01}. 
%Elworthy already used the Ebin and Marsden machinery, in \cite[Chapter VIII, Par. 1]{Elw82} to get the existence of a smooth (in space) flow solving an SDE in finite dimension (see also \cite{Elw1978}). 
Another approach to stochastic integration on manifolds is developed in \cite{DalBel1989}, which is more based on It\^o integration and needs a strong Riemannian structure on the (infinite-dimensional) manifold. Applications of SDEs on infinite-dimensional manifolds to mathematical physics and stochastic PDEs have been considered, for example in the case of SDEs on loop manifolds (e.g.\ \cite{BaE01}), manifold-valued stochastic PDEs (e.g.\ \cite{Fun1992,BrzOnd2013,Hus2015}), SDEs on infinite-dimensional Lie groups (e.g.\ \cite{AlbDal2000}). The lifting to an infinite-dimensional manifold of diffeomorphisms has been used in \cite{Elw1978,BrzElw1996} to show the existence of stochastic flows for finite-dimensional SDEs.
However, none of these obtain a stochastic extension of the Ebin and Marsden result as we accomplish in this paper.
% In all these examples, an Ebin and Marsden approach to stochastic PDEs is not developed.

Closer to our framework is instead an interesting approach to deterministic, viscous PDEs like Navier-Stokes equations. Roughly speaking, the idea is that the Laplacian at the Eulerian level corresponds to noise at the Lagrangian level, provided an average (over the noise) is taken. An example is given by Gliklikh in \cite[Chapter 16]{Glik11} (based on previous works like \cite{BelGli2002}): in particular, the Lagrangian SDEs \cite[\S 16.24]{Glik11} and \cite[\S 16.25]{Glik11} are similar to our Lagrangian SDE \eqref{eq:euler_formal_lagrangian} (more precisely, to \eqref{Euler:stochastic_Lagrangian}).
However, in those equation the noise is taken on the velocity (not on the force) and mean derivatives are used for the velocity equation, so that the Eulerian counterpart becomes the deterministic Navier-Stokes equations (and not stochastic Euler equations).
Also, Gliklikh uses an It\^o-like formulation (which needs a connection at the infinite-dimensional level, something avoided here). This line of research has been developed by Cruzeiro and coauthors \cite{CruSha2009,ArnCruFan2018}. In particular, the recent paper \cite{Cru2019} also considers stochastic Euler equations (with transport noise), where the Eulerian-to-Lagrangian link is shown without going into the infinite-dimensional analysis.
Another important contribution to the geometric viewpoint on stochastic Euler equations is given in \cite{CaM08,CaFaM07}: these papers take Arnold's viewpoint and, in 2D, construct a solution to Euler equations via Girsanov transform (a transformation of the SDE that removes the drift), though no direct analysis of the infinite-dimensional Lagrangian SDE is given.

In summary, the notion of using the Ebin and Marsden approach to stochastic PDEs has been considered by many authors, but to fully develop such an analysis has remained an open problem.
Here we solve this problem in a first setting: stochastic Euler equations with additive noise. As a byproduct,
we get local existence and uniqueness for stochastic Euler equations on any compact manifold (of any finite dimension $d\ge 2$). To the best of our knowledge this has not
been obtained before.

We stress that our purpose is not to improve on previous existence results\footnote{The existence results in \cite{Glatt-Holtz-Vicol-2014}, based on traditional SPDE techniques on $\mathbb{R}^n$, are sharper than what presented here in terms of regularity and in terms of classes of noise than what we obtain in this paper. It is possible to sharpen our results in terms of regularity and we believe we can treat more general types of noises. However, a refined analysis easily becomes technical and would diverge from the essence of this paper. Instead, we chose to present the most accessible results to illustrate our framework. We postpone refined analysis to future, more specialized publications.}.
Rather, we anticipate the framework presented here to be the first step in a stochastic analog of the Ebin and Marsden framework for deterministic non-linear PDEs, which for 50 years has been a valuable complement to more traditional PDE analysis, and which for various equations has led to deep insights:
Fredholmness of solution maps~\cite{EbMiPr2006,MiPr2010}, global existence results~\cite{Mi2002,MiMu2013}, vanishing geodesic distances~\cite{MiMu2005,BaHaPr2020}, limits between compressible and incompressible Euler equations~\cite{Eb1975}, averaged Euler equations and sharp results on viscous limits~\cite{MaRaSh2000,MaSh2003,CoSh2007}, stability of steady solutions~\cite{Ar1965,FrSh2001,Pr2004}, connections between fluid dynamics and optimal transport~\cite{MR2456522}, improved numerical methods~\cite{MoVi2020}, etcetera.
The key point is that a stochastic Ebin and Marsden framework enables stochastic calculus on manifolds as long as the techniques one use are oblivious to the manifold being modeled on a Banach (or Hilbert) space.
One such technique is local existence of stochastic ordinary differential equations on Hilbert and Banach manifolds~\cite{Elw82,BaE01}, as we elevate in this paper.
In analogous ways, our framework may also lead to progress on related stochastic topics.
Some of these, not addressed in this paper, include:
(i) A different, geometric approach to stationary measures for stochastic Euler and Navier-Stokes equations, developed e.g. in ~\cite{Ku2012}.
(ii) Stability analysis for stochastic Euler equations (with possibly multiplicative noise): the interplay between stability/completeness/explosion and geometry for a stochastic system has been investigated in the finite dimensional case for example in \cite{Aze1974,Li1994}.
(iii) Metastability (i.e. the notion where, for small noise and large time, the solution ``jumps'' from a stable steady solution to the deterministic equation to another one), see e.g.~\cite{BouLauZab2014} for 2D Euler equation, one could try to study the transition paths between steady states from a geometric viewpoint.
(iv) Interpretations of the stochastic Euler equations (with added damping and an potential) as a Hamiltonian Monte-Carlo method on an infinite-dimensional Riemannian manifold~\cite{GiCa2011}.
(v) Structure preserving numerical methods for stochastic Euler equations, inspired by the deterministic setting in~\cite{MoVi2020}.

We now continue the introduction by presenting the stochastic Euler model.
Thereafter we state the main result.
The fluid domain is a compact oriented Riemannian manifold $K$ of dimension $d$, possibly with smooth boundary, and the equations we study are (formally) given by
\begin{equation}\label{eq:euler_formal}
\left\{
\begin{aligned}
	& \frac{\partial u}{\partial t} + \nabla_u u + \nabla p = \dot W \\
	& \operatorname{div}(u) = 0
\end{aligned}\right.
\end{equation}
where $u$ is a vector field on $K$ of Sobolev regularity $H^s$ describing the fluid velocity, $p$ is the pressure function, $\nabla_u$ denotes the co-variant derivative along $u$, and the vector valued noise $\dot W$ corresponds to a fluctuating external force field; more precisely, $W$ is a Wiener process with values in the space of Sobolev, divergence free vector fields.
% The initial data is given by the vector field $v_0$.
If $K$ has a boundary, we require $u$ and the noise field $\dot W$ to be tangential to it.

%To make the formulation~\eqref{eq:euler_formal} rigorous we first need to model the stochastic process $W$ for the noise term.
%From the physical point-of-view of infinitesimal fluid particles, it is natural to assert that nearby particles experience approximately the same force.
%In other words that the acceleration of the fluid particles is spatially continuous, so that no particles are `ripped apart'.
%From a stochastic point-of-view this means that the noise should be \emph{spatially correlated}.
%To encode this we take white noise in time with values in the Hilbert space of $H^s$-Sobolev vector fields on $K$.
%The next step is to reformulate \eqref{eq:euler_formal} as an integral equation.
%For this we need to choose between the Ito or Stratonovich formulation of the stochastic integrand.
%We choose Stratonovich because the chain rule for these integrals allows us to consider stochastic equations and their solutions on manifolds independently of their representation in charts.

The next step is to reformulate equation~\eqref{eq:euler_formal} using the Lagrangian variable $\Phi$, with $\dot\Phi = u\circ\Phi$.
The Lagrangian equation then takes the form
\begin{equation}\label{eq:euler_formal_lagrangian}
\left\{
\begin{aligned}
& \nabla_{\dot{\Phi}}\dot{\Phi} + \nabla p\circ\Phi = \dot W\circ\Phi \\
& \operatorname{div}(\dot\Phi\circ\Phi^{-1}) = 0 .
\end{aligned}\right.
\end{equation}
%where $\bullet$ denotes Stratonovich integration and $\comp$ the composition between functions. The drift $B$ is the geodesic spray and, by the Ebin and Marsden theory, is smooth.
Here, we interpret the flow $\Phi$ as evolving on the infinite-dimensional manifold of volume preserving diffeomorphisms of Sobolev regularity $H^s$.
Using geometric and stochastic analysis on infinite-dimensional manifolds we prove the following result.

% Here, $(\$ $\Diff^s(K)$ is the Hilbert manifold of $H^s$ diffeomorphisms on $K$.

% The rigorous formulation of the Euler equations with stochastic forcing is now given by
% \begin{equation}\label{eq:euler_rigorous}
% \left\{
% \begin{aligned}
% 	& d v + \nabla_v v \, dt + \nabla p\, dt = \sum_{k=1}^\infty \sigma_k \circ d W \\
% 	& \operatorname{div}(v) = 0	
% \end{aligned}\right.
% \end{equation}

\begin{mainthm1}
	Fix $s>d/2+1$ and suppose that the noise takes values in the space of $H^{s+2}$ divergence-free (and tangential) vector fields.
	Then local existence and uniqueness hold for the Lagrangian formulation \eqref{eq:euler_formal_lagrangian}.
\end{mainthm1}

Relating this result to the original Eulerian formulation~\eqref{eq:euler_formal} is more complicated than in the deterministic case because of the stochastic terms.
That is why the following theorem requires higher regularity.

\begin{mainthm2}
	Fix $s>d/2+4$ and suppose that the noise takes values in the space of $H^{s+2}$ divergence-free (and tangential) vector fields. 
	Then local existence and uniqueness for the stochastic Euler equation~\eqref{eq:euler_formal} hold.
\end{mainthm2}

We expect that the regularity required from noise terms and initial data is not optimal. Indeed, regularisation arguments (cf.\ Section~\ref{sect:above_beyond}) should lead to much lower regularity assumptions. 
The point of the paper is to develop, as neatly as possible, a stochastic version of the Ebin and Marsden results.
For this reason we do not deal with optimal regularity questions here. Instead we plan show in a separate paper how to close the regularity gap between our two main theorems above, getting local well-posedness for the stochastic Euler equation~\eqref{eq:euler_formal} for $s>d/2+1$.

The paper is organized as follows: In Section~\ref{sect: SDE:HMfd}, we give a self-contained presentation of Stratonovich SDEs on infinite-dimensional Hilbert manifolds. This is complemented by extensive background material on stochastic integration on infinite-dimensional spaces in Appendix \ref{App:stochastic}. In Section~\ref{sect:VF:Sobo:diff}, we introduce the Hilbert manifold of Sobolev diffeomorphisms on $K$ and analyze the regularity properties of the composition map. This is also complemented by background material on spaces of Sobolev maps in Appendix~\ref{app: Sobolev}. In Section~\ref{sect: EM-SEuler}, we state and prove the main results: we prove the local existence and uniqueness for the Lagrangian formulation and then we show, first formally and then rigorously, the link between the Lagrangian form and the Eulerian form, concluding the local existence and uniqueness for the latter. In Section~\ref{sect:above_beyond}, we discuss possible extensions and future developments; as another example of application of our methods, we show local well-posedness of averaged Euler equation \eqref{eq:averaged_euler_stochastic}.
%In the Appendix~\ref{app: Sobolev}, we recall some properties of spaces of Sobolev maps.

% Throughout the article we fix the following conventions and notations:\smallskip
 
\paragraph*{Notation and Conventions}
 Let $E,F$ be Banach spaces and $U\subseteq E$ open. A map $f \colon E \supseteq U \rightarrow F$ is said to be of class $C^k$ if it is $k$ times continuously \Frechet differentiable. We write $Df$ for its derivative and say that a $C^k$-map is of class $C^{k,1}$ (or $C^{k,1}_{\text{loc}}$) if its $k$th derivative is (locally) Lipschitz continuous.
 Spaces of linear operators $L(E,F)$ are endow with the operator norm $\|\cdot\|_{L(E,F)}$. If there is no possible confusion, we write $\|\cdot\|_{L}$ for the operator norm.
 
 We let $K$ be a compact manifold (possibly with smooth boundary $\partial K$). 
 Further, $H,H'$ etc.\ will be separable Hilbert spaces and $M$ a (possibly infinite-dimensional) metrisable and separable manifold modelled on $H$. Our main example will be  $M=\Diff_\mu^s(K)$, the group of volume preserving diffeomorphisms of class $H^s$ on $K$. %This is a manifold of mappings, meaning that maps on the finite dimensional level, e.g.\ diffeomorphisms on $K$, become points on the infinite-dimensional manifold. 
 For a differentiable map $f \colon M \rightarrow N$ we let $Tf$ be its tangent map. %, then we use $\Phi$ as generic element for $M$ and $\eta$ as generic element of $TM$, identified with $V\circ \Phi$ for some vector field $V$ on $K$.
 Generic charts will be denoted by $\kappa \colon U\rightarrow \R^d$ and by $\psi \colon U\rightarrow H$. % For elements of $\kappa(U)$ or $\psi(U)$, that is elements on the modelled space, we still use $k$ and $x$ and $\Phi$.
 % Given a map $f \colon K\rightarrow K$ we set $f^\kappa=\kappa \circ f\circ k\kappa^{-1}$ for $f$ in the chart $\kappa$. 
 For a a vector field $X$ we let $X^\kappa$ either be its representative in a chart $\kappa$, or (abusing notation) its principal part $\text{pr}_2\circ T\kappa \circ X \circ \kappa^{-1}$.  
 %By $k$ denote a generic element of $K$ and by $V$ both a generic element of $TK$ and a vector field on $K$. we denote by $x$ a generic element of $M$ and again by $V$ both a generic element of $TM$ and a vector field on $M$. %Given a differentiable map $f:M\rightarrow N$ among two manifolds $M$, $N$, we denote by $Tf \colon TM\rightarrow TN$ its tangent map. When $N=H$ is a Hilbert space (and so $TN\cong H\times H$), we denote by $Df=\text{pr}_2 Tf$ the principal component of $Tf$ (here this is the projection on the second component of $H\times H$).
 %We denote by $A\circ B$ the composition of two maps $A$ and $B$, both on the finite and on the infinite-dimensional level (depending on the context). We sometimes use $\comp$ on the finite dimensional level, when we need to stress the differentiability properties of the composition map.
% We denote by $\pi$ the bundle projection (on a manifold $M$ from the first tangent bundle $TM$ or the second tangent bundle $TM$), we denote by $\text{pr}_2$ the projection on the principal component (this is chart dependent in general).
 
 Concerning the stochastic setting, we denote the % $(\Omega,\cA,P)$ a probability space and by $\cF=(\cF_t)_t$, $t\in [0,+\infty[$, a filtration on it, satisfying the assumptions in \ref{setup:filtration_standard}. 
 Borel-$\sigma$-algebra of a topological space $T$ by $\mathcal{B}(T)$ and let $\one_A$ be the indicator function of the set $A$. %Also, in the stochastic setting, we will use letters like $X$ and $\xi$ to denotes processes (instead of vector fields).
 Moreover, we diverge from usual notation and denote Stratonovich integrals by $\int f \bullet dW$. The usual ''$\circ$'' will be reserved for composition.
\medskip

\noindent \textbf{Acknowledgements} We would like to thank D.~Holm, F.~Flandoli, Z.~Brze\'zniak, I.~Bailleul, P.\ Harms and M.\ Bauer for helpful discussions on the subject of this work; we thank Z.~Brze\'zniak also for providing useful references on the topic. Moreover, we thank H.~Gl\"ockner for providing information which led to Lemma~\ref{lem: expo:lin}.
The work was supported by EU Horizon 2020 grant No 691070, and by the Swedish Research Council (VR) grant No 2017-05040. Part of this work was undertaken when M.M.\ was at the University of York, supported by the Royal Society via the Newton International Fellowship NF170448 ``Stochastic Euler equations and the Kraichnan model''. A.S.\ was supported by the Einstein foundation while conducting work at TU Berlin.

\section{Stochastic differential equations on Hilbert manifolds}\label{sect: SDE:HMfd}

We assume familiarity with the basic objects of probability theory and stochastic processes such as Brownian motion. 
This section will review Stratonovich integration on Hilbert manifolds. We take the main results (with small modifications), from, and follow the approach in, \cite{Elw82,BaE01}, taking also some facts from \cite{daPaZ} and \cite{BrNeVeWe2008}. Moreover, Appendix \ref{App:stochastic} includes a self contained review of stochastic integration and stochastic differential equations on Hilbert spaces.

%\subsection*{Stochastic differential equations on Hilbert manifolds}

We will introduce SDEs on Hilbert manifolds and obtain the main local well-posedness result. This will enable us to establish the local well-posedness of Euler flows in Section~\ref{sect: EM-SEuler}. The Stratonovich integral is the right type of integral to use here: it is invariant under change of charts, because the It\^o formula for Stratonovich integrals is analogue to the classical chain rule, without second order terms. This invariance is at the basis of the so-called Malliavin's ``transfer principle'': quoting \cite{Eme1990}, `geometric constructions involving manifold-valued curves can be extended to manifold-valued [stochastic] processes by replacing classical calculus with Stratonovich stochastic calculus'. On the contrary, the It\^o integral instead would not be invariant under change of charts (cf.\ Appendix \ref{App:stochastic}).  Let us now describe the basic setting used throughout this section.

\begin{setup}\label{setup:filtration_standard}
Fix a probability space $(\Omega,\cA,P)$ together with a filtration $\cF=(\cF_t)_t$, $t\in [0,+\infty[$, that is a non-decreasing (i.e. $\cF_s\subseteq \cF_t$ for every $s\le t$) family of $\sigma$-algebras contained in $\cA$, indexed by $t\in [0,+\infty[$.
We assume that $\cA$ is the $\sigma$-algebra $\cF_\infty$ generated by all $\cF_t$ and that $\cF$ is complete and right-continuous (these are technical and classical assumptions).
Further, we let $L^p(\Omega)$ be the space of real valued $L^p$-functions on $\Omega$. For $X$ in $L^1(\Omega)$, the symbol
\begin{align*}
E[X] = \int_\Omega X dP
\end{align*}
denotes the expectation under $P$. For a random variable or process with values in a topological space, measurability and progressive measurability are understood with respect to the Borel $\sigma$-algebra on the topological space, unless differently specified.
\end{setup}

\begin{setup}
Let $H,E$ be separable Hilbert space. We choose an $E$-valued $Q$-Brownian motion $W$ with respect to $\cF$ (cf.\ \ref{setup:QWiener} for the definition of Brownian motion). Let now $M$ be a metrizable separable differentiable (i.e.\ $C^\infty$) manifold modelled on the Hilbert space $H$.\footnote{We are mostly interested in the case where $M = \Diff^s_\mu (K)$ is the manifold of $H^s$-diffeomorphisms preserving a volume form $\mu$ and $H = \VFslmu$ is the space of divergence free vector fields (for $s$ large enough) and tangent to the boundary $\partial K$, cf.\ Appendix \ref{app: Sobolev}.} Let $(\psi_\alpha \colon O_\alpha \rightarrow V_\alpha)$ be a countable atlas of $C^\infty$ charts, with $\psi_\alpha$ and $\psi_\alpha^{-1}$ and their first $100$ derivatives bounded (we have chosen $100$ just as high enough number for what we need). Here ``bounded'' refers for the mappings to the natural metric on manifold and model space, and for the derivatives with respect to the natural operator norms induced by the Hilbert space norm and the norm on the tangent spaces induced by a strong Riemannian metric.
\end{setup}

\begin{setup}
%In the following, given two separable Hilbert spaces $H_1$, $H_2$, we call $L(H_1,H_2)$ the space of linear continuous operators from $H_1$ to $H_2$; it is a Banach space with the operator norm. 
Let $\mathbb{A}$ and $\mathbb{B}$ be vector bundles over $M$ whose fibres we denote by $A_x$ and $B_x$. Recall from \cite[Lemma 1.2.12]{MR1330918} the associated vector bundle of linear maps 
\begin{align*}
L(\mathbb{A};\mathbb{B}) := \bigcup_{x\in M} \{x\} \times L(A_x, B_x).
\end{align*}
A trivialisation of $L(\mathbb{A};\mathbb{B})$ over a chart $(U,\psi)$ of $M$ is then given by $L(\varphi_\mathbb{A},\varphi_\mathbb{B})(x,F) := (\psi (x),\varphi_\mathbb{B} (x,F(\varphi_\mathbb{A}^{-1} (\psi(x),\cdot))))$, where $\varphi_\mathbb{A}$ and $\varphi_\mathbb{B}$ are bundle trivialisations over $(U,\psi)$. Note that, since we are not considering maximal solutions here, it will suffice to work in a fixed bundle trivialisation. If one of the vector bundles is trivial with typical fibre $E$, we write (if $\mathbb{A}$ is trivial) $L(E;\mathbb{B})$ to shorten the notation. As usual we denote the space of $C^k$-sections, i.e.\ $C^k$ maps $f:M\rightarrow L(\mathbb{A};\mathbb{B})$ with $\pi(f)=\id$, by $\Gamma_{C^k} (L(\mathbb{A};\mathbb{B}))$.

Let now $f \colon M\rightarrow \tilde{H}$ be a $C^{k+1}$ map with values in a separable Hilbert space $\tilde{H}$ and $Df := \text{pr}_2 \circ Tf \colon TM \rightarrow \tilde{H}$ the principal part of the tangent map.
Note that $Df$ induces a $C^k$-bundle morphism over the identity $(\pi, Df) \colon TM \rightarrow M\times \tilde{H}$, where $\pi \colon TM \rightarrow M$ is the bundle projection. Hence postcomposition yields a $C^k$-bundle map $(Df)_* \colon L(\mathbb{A};TM) \rightarrow L(\mathbb{A};\tilde{H})$.\footnote{Working in local trivialisations (cf.\ e.g.\ \cite[III, \S 4]{MR1666820}) we may assume $M\subseteq H$ open. Thus the $C^k$-property follows immediately from smoothness of operator composition, \cite[I, \S 2, Proposition 2.6]{MR1666820}, and the \Frechet derivative $Df \colon M \rightarrow L(H,\tilde{H})$ being a $C^k$-map.} Thus for $\sigma \in \Gamma_{C^k} (L(E;TM))$ we obtain a $C^k$-map $Df_*(\sigma) \colon M\rightarrow L(E,\tilde{H})), m \mapsto (Df)_*(\sigma(m))$.
\end{setup}

Differently from the case of Hilbert space, we do not have a notion of It\^o differential that we can use to define a Stratonovich integral.
However, we can give the notion of solution to a stochastic differential equation on $M$:
\begin{align}
\begin{aligned}\label{eq:SDE_man}
&dX_t = b(X_t)dt +\sigma(X_t) \bullet dW_t,\\
&X_0 = \zeta.
\end{aligned}
\end{align}
Here the drift $b\colon M\rightarrow TM$ and the diffusion coefficient $\sigma:M\rightarrow L(E;TM)$ are given sections assumed continuous and in $C^1$ resp., the initial datum $\zeta\colon\Omega\rightarrow M$ is a $\cF_0$-measurable random variable.

\begin{defn}\label{def:SDE_man_sol}
An $M$-valued, progressively measurable process $X$, defined on $[0,\tau)$ for some accessible stopping time $\tau$ with $\tau>0$ $P$-a.s., is called a local strong solution to \eqref{eq:SDE_man} if it has $P$-a.s. continuous paths and, for every $C^2$ function $h \colon M\rightarrow \tilde{H}$ with values in a separable Hilbert space $\tilde{H}$, there holds, $P$-a.s.,
\begin{equation}\begin{aligned}
h(X_t) = h(\zeta) +&\int_0^t Dh \circ b(X_r) dr +\int_0^t Dh \circ \sigma(X_r) dW_r \\+&\frac12 \int_0^t \tr[D(Dh \circ \sigma) \circ \sigma(X_r)Q] dr, \quad \forall t\in[0,\tau).\end{aligned} \label{eq:SDE_man_def}
\end{equation}
Here the trace is understood as
\begin{align*}
\tr[D(Dh \circ \sigma) \circ \sigma(X_r)Q] = \sum_k D(Dh\circ \sigma) \circ \sigma(X_r)Q^{1/2}e_k \circ Q^{1/2}e_k
\end{align*}
with $\{e_k\}_k$ a complete orthonormal basis of $E$.
\end{defn}
This definition extends to progressively measurable processes $X$, defined on the closed interval $[0,\tau]$ for some $P$-a.s. finite stopping time $\tau$ with $\tau>0$ $P$-a.s., requiring that \eqref{eq:SDE_man_def} holds for all $t$ in $[0,\tau]$. Moreover, if $X$ takes values $P$-a.s. in an open subset $U$ of $M$, it is enough that $b$ and $\sigma$ are defined and in $C^0$, in $C^1$ resp.\ on $U$.
If $M=H$, thanks to Theorem \ref{thm:Ito_formula_Strat} in Appendix \ref{App:stochastic}, the above definition is equivalent to the definition of solution on Hilbert spaces.

Given a solution $X$ to \eqref{eq:SDE_man} and a $C^1$ map $g\colon M\rightarrow L(E,\tilde{H})$, with $\tilde{H}$ a separable Hilbert space, we define the Stratonovich integral
\begin{align*}
\int_0^t g(X_r) \bullet dW_r = \int_0^t g(X_r) dW_r +\frac12 \int_0^t \tr[Dg \circ \sigma(X_r) Q] dr,\quad t\in[0,\tau)
\end{align*}
(or for $t\in[0,\tau]$ if $X$ is defined on $[0,\tau]$). By this definition, the Stratonovich integral depends a priori on $X$, $g$ and $\sigma$ separately, though also here one can show that the definition depends only on $g(X)$ (see Remark \ref{rem:Strat_def} in Appendix \ref{App:stochastic}).

The above definitions and properties are extended to Stratonovich differentials, that is the case of a more general drift $B$, namely
\begin{align}
dX_t = B_t dt +\sigma(X_t) \bullet dW_t\label{eq:Strat_diff_manifold}
\end{align}
where $B$ is a $TM$-valued progressively measurable process, with $P$-a.s. continuous paths, requiring that, $P$-a.s., $\pi(B_t) = X_t$ for every $t$ (we recall that $\pi$ is the bundle projection on the base point). For example, a progressively measurable process $X$, defined on $[0,\tau)$ for some accessible stopping time $\tau$, satisfies \eqref{eq:Strat_diff_manifold} if it has $P$-a.s. continuous paths, it satisfies $P$-a.s. $\pi(B_t)=X_t$ for every $t$ and, for every $C^2$ function $h\colon M\rightarrow \tilde{H}$ with values in a separable Hilbert space $\tilde{H}$, there holds, $P$-a.s.,
\begin{align}
\begin{aligned}\label{eq:Strat_diff_manifold_def}
h(X_t) = h(X_0) +&\int_0^t Dh \circ B_r dr +\int_0^t Dh \circ \sigma(X_r) dW_r \\+&\frac12 \int_0^t \tr[D(Dh \circ \sigma) \circ \sigma(X_r)Q] dr, \quad \forall t\in[0,\tau).
\end{aligned}
\end{align}

\begin{thm}[It\^o formula for manifold-valued processes]\label{thm:Ito_formula_manifold}
Assume the above setting. Let $X$ be a solution to \eqref{eq:SDE_man} and let $f:M\rightarrow N$ be a $C^2$ function, where $N$ is a metrizable separable differentiable manifold modelled on a (separable) Hilbert space $H'$. Then there holds, $P$-a.s.,
\begin{align}\label{eq:Ito_formula_manifold}
f(X_t) = f(X_0) +\int_0^t Tf \circ B_r dr +\int_0^t Tf \circ \sigma (X_r) \bullet dW_r,\quad \forall t\in[0,\tau).
\end{align}
The meaning of \eqref{eq:Ito_formula_manifold} is understood rigorously as follows: For every $C^2$ function $h \colon N\rightarrow\tilde{H}$ with values in a separable Hilbert space $\tilde{H}$, there holds for all $t\in [0,\tau)$, $P$-a.s.,
\begin{align}
\begin{aligned}\label{eq:Ito_formula_manifold_rig}
h\circ f(X_t) = h\circ f(X_0) +&\int_0^t Dh \circ Tf \circ B_r dr +\int_0^t Dh \circ Tf \circ \sigma(X_r) dW_r  \\ +&\frac12 \int_0^t \tr[D(Dh \circ Tf \circ \sigma) \circ \sigma(X_r)Q] dr.
\end{aligned}
\end{align}
\end{thm}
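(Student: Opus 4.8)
The plan is to reduce the assertion to the very definition of a (Stratonovich) differential on $M$, exploiting that the rigorous meaning \eqref{eq:Ito_formula_manifold_rig} is phrased purely in terms of Hilbert-space-valued test functions. The guiding observation is that precomposition with $f$ sends test functions on $N$ to test functions on $M$: if $h\colon N\to\tilde H$ is $C^2$ and $f\colon M\to N$ is $C^2$, then $\tilde h:=h\circ f\colon M\to\tilde H$ is again a $C^2$ map into a separable Hilbert space, hence an admissible test function in the defining identity \eqref{eq:Strat_diff_manifold_def} for $X$ (with $B_r=b(X_r)$ in the situation of \eqref{eq:SDE_man}).

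First I would apply \eqref{eq:Strat_diff_manifold_def} to the test function $\tilde h=h\circ f$, which yields, $P$-a.s.\ for all $t\in[0,\tau)$,
\begin{align*}
(h\circ f)(X_t) = (h\circ f)(X_0) &+\int_0^t D(h\circ f)\circ B_r\,dr +\int_0^t D(h\circ f)\circ\sigma(X_r)\,dW_r \\ &+\frac12\int_0^t\tr[D(D(h\circ f)\circ\sigma)\circ\sigma(X_r)Q]\,dr .
\end{align*}
It then remains only to match the integrands with those in \eqref{eq:Ito_formula_manifold_rig}. For this I would invoke the chain rule for tangent maps between manifolds, $T(h\circ f)=Th\circ Tf$; passing to principal parts gives the pointwise identity $D(h\circ f)=\text{pr}_2\circ Th\circ Tf=Dh\circ Tf$. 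Substituting this term by term turns the first two integrals directly into $\int_0^t Dh\circ Tf\circ B_r\,dr$ and $\int_0^t Dh\circ Tf\circ\sigma(X_r)\,dW_r$, while in the trace term $D(h\circ f)\circ\sigma=(Dh\circ Tf)\circ\sigma$ gives $\tr[D(Dh\circ Tf\circ\sigma)\circ\sigma(X_r)Q]$. The resulting identity is exactly \eqref{eq:Ito_formula_manifold_rig}, which is by definition the meaning of \eqref{eq:Ito_formula_manifold}, so the proof is finished.

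There is essentially no analytic obstacle; the content is conceptual and is built into the Stratonovich formulation. The only bookkeeping I would check is regularity: $f\in C^2$ and $h\in C^2$ give $h\circ f\in C^2$, so $D(h\circ f)$ is $C^1$, $D(h\circ f)\circ\sigma$ is $C^1$ (as $\sigma\in C^1$), and the trace term in \eqref{eq:Strat_diff_manifold_def} is well-defined; all integrals that appear are already the ones guaranteed to exist by the definition of solution applied to the $C^2$ test function $h\circ f$, so no separate integrability estimate is needed. The single step to watch is the principal-part chain rule $D(h\circ f)=Dh\circ Tf$; this is precisely why the manifold-level It\^o formula \eqref{eq:Ito_formula_manifold} carries no second-order correction term, reflecting the transfer principle that Stratonovich calculus obeys the ordinary chain rule.
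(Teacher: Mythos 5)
Your proposal is correct and is exactly the paper's own argument: the paper proves Theorem~\ref{thm:Ito_formula_manifold} by applying the defining identity \eqref{eq:Strat_diff_manifold_def} to the test function $h\circ f$ and invoking the chain rule $D(h\circ f)=Dh\circ Tf$, which is precisely your two steps. Your additional remarks on regularity bookkeeping are sound but not needed beyond what the definition already guarantees.
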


\begin{proof}
The result follows from the definition of Stratonovich differential, precisely formula \eqref{eq:Strat_diff_manifold_def}, with $h$ replaced by $h\circ f$, noting that $D(h\circ f) = Dh \circ Tf$.
\end{proof}

\begin{rem}
If $X$ takes values $P$-a.s. in an open subset $U$ of $M$, then equation \eqref{eq:Strat_diff_manifold_def} holds $P$-a.s. also for every $f \in C^2 (U, \tilde{H})$, that is, $X$ solves \eqref{eq:Strat_diff_manifold} also as $U$-valued process, where $U$ inherits the manifold structure from $M$. We argue by localisation and take for each $n$,
\begin{align*}
U_n = \{x \in M \mid \text{dist}(x,U^c)>1/n\} \subseteq U,
\end{align*}
where $\text{dist}$ is the distance induced by the metric on $M$. Now $M$ is paracompact and modelled on a Hilbert space. As Hilbert spaces admit smooth bump functions, cf.\ \cite[16.16.\ Corollary]{MR1471480}, the usual partition of unity argument shows that for every $n$ there is a $C^2$ (even smooth) $\psi^n\colon M\rightarrow \R$ with $\psi^n=1$ on $U_n$ and $\psi^n=0$ on $U_{n+1}^c$. Define $\tau^n$ as the minimum of $\tau$ and the first exit time of $X$ from $U_n$. Then for any $f \colon U\rightarrow \tilde{H}$, we apply \eqref{eq:Strat_diff_manifold_def} to $\psi^n \cdot f $ (trivially extended to a $C^2$ function on $M$), and get \eqref{eq:SDE_man_def} for $f$ before time $\tau^n$. Letting $n$ go to $\infty$, we get \eqref{eq:Strat_diff_manifold_def} for $f$ before $\tau$.
\end{rem}

As a consequence, we get the invariance of the equation under diffeomorphism:

\begin{lem}\label{lem:invariance_SDE_diffeo}
Let $U$, $V$ be open sets resp. on $M$, $N$, with $N$ another metrizable separable Hilbert manifold (modelled possibly on another separable Hilbert space), let $\varphi\colon U\rightarrow V$ be a $C^2$ diffeomorphism. Let $X$ be a solution on $[0,\tau)$ (or on $[0,\tau]$) to \eqref{eq:SDE_man} such that $X$ takes values in $U$ $P$-a.s.. Then $Y=\varphi(X)$ is a solution to
\begin{align*}
dY = T\varphi\circ b\circ \varphi^{-1}(Y) dt +T\varphi\circ \sigma\circ \varphi^{-1}(Y) \bullet dW.
\end{align*}
\end{lem}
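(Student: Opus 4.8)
The plan is to reduce the statement to the defining identity \eqref{eq:SDE_man_def} and then transport it through $\varphi$ by the Stratonovich chain rule. First I would invoke the preceding Remark: since $X$ takes values in $U$ $P$-a.s., it is also a solution of \eqref{eq:SDE_man} viewed as a $U$-valued process, so \eqref{eq:SDE_man_def} holds (with $b|_U,\sigma|_U$) for every $C^2$ map $f\colon U\to\tilde H$. Given an arbitrary $C^2$ test function $h\colon V\to\tilde H$, the composite $h\circ\varphi\colon U\to\tilde H$ is again $C^2$ because $\varphi$ is $C^2$, so I may feed $f=h\circ\varphi$ into \eqref{eq:SDE_man_def}. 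Writing $Y=\varphi(X)$, so that $X=\varphi^{-1}(Y)$ $P$-a.s.\ by bijectivity of $\varphi$, the left-hand side and the initial term read $h(Y_t)$ and $h(Y_0)$, which already match the definition of a solution for $Y$. Before comparing the remaining terms I would record the regularity bookkeeping: since $\varphi\in C^2$ the tangent map $T\varphi$ is $C^1$, and together with $b\in C^0$, $\sigma\in C^1$, $\varphi^{-1}\in C^2$ and smoothness of operator composition (exactly as in the setup above) this gives $\tilde b\coloneq T\varphi\circ b\circ\varphi^{-1}\in C^0$ and $\tilde\sigma\coloneq T\varphi\circ\sigma\circ\varphi^{-1}\in C^1$, so the target equation is of the admissible form.

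For the two first-order terms the chain rule $D(h\circ\varphi)=Dh\circ T\varphi$ does all the work. Using $X_r=\varphi^{-1}(Y_r)$, the drift integrand becomes $Dh\circ T\varphi\circ b(X_r)=Dh\circ\tilde b(Y_r)$ and the stochastic integrand becomes $Dh\circ T\varphi\circ\sigma(X_r)=Dh\circ\tilde\sigma(Y_r)$. These two integrals are therefore precisely the drift and stochastic terms prescribed by \eqref{eq:SDE_man_def} for $Y$ with coefficients $\tilde b,\tilde\sigma$.

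The only genuine computation, and the step I expect to be the crux, is the second-order (trace) term, where the invariance of the Stratonovich integral has to be checked by hand. The key observation is that, as a map on $U$, $Dh\circ T\varphi\circ\sigma$ factors as $F\circ\varphi$ with $F\coloneq Dh\circ\tilde\sigma\colon V\to L(E,\tilde H)$, because $\tilde\sigma\circ\varphi=T\varphi\circ\sigma$. Differentiating and applying the chain rule $D(F\circ\varphi)=DF\circ T\varphi$, each summand of the trace transforms as
\begin{align*}
&\big[D(Dh\circ T\varphi\circ\sigma)(X_r)\cdot\sigma(X_r)Q^{1/2}e_k\big]\,Q^{1/2}e_k \\
&\qquad =\big[DF(Y_r)\cdot\big(T\varphi\,\sigma(X_r)Q^{1/2}e_k\big)\big]\,Q^{1/2}e_k
=\big[D(Dh\circ\tilde\sigma)(Y_r)\cdot\tilde\sigma(Y_r)Q^{1/2}e_k\big]\,Q^{1/2}e_k,
\end{align*}
where I used $T\varphi\,\sigma(X_r)=\tilde\sigma(Y_r)$. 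Summing over $k$ yields $\tr[D(Dh\circ T\varphi\circ\sigma)\circ\sigma(X_r)Q]=\tr[D(Dh\circ\tilde\sigma)\circ\tilde\sigma(Y_r)Q]$, so no It\^o-type correction survives---this is exactly why the argument requires no connection. Collecting the three terms shows that \eqref{eq:SDE_man_def} holds for $Y$ for every $C^2$ function $h$, i.e.\ $Y$ solves the stated equation; the identical argument runs on the closed interval $[0,\tau]$. In fact this whole computation is just the rigorous content of the It\^o formula, Theorem~\ref{thm:Ito_formula_manifold}, applied to $f=\varphi$ on the manifold $U$, once one notes that the resulting Stratonovich drift $T\varphi\circ b(X_r)$ equals the value $\tilde b(Y_r)$ of a genuine drift section along $Y$.
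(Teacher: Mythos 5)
Your proposal is correct and follows essentially the same route as the paper: the paper also reduces the claim to the It\^o formula (Theorem~\ref{thm:Ito_formula_manifold}, i.e.\ testing \eqref{eq:SDE_man_def} against $h\circ\varphi$), and its key step is precisely your trace identity, obtained from the chain-rule factorisation $D(Dh\circ T\varphi\circ\sigma)=D(Dh\circ T\varphi\circ\sigma\circ\varphi^{-1})\circ T\varphi$. Your additional bookkeeping (invoking the preceding localisation remark so that test functions need only be defined on $U$, and checking the regularity of $\tilde b$, $\tilde\sigma$) is a faithful spelling-out of details the paper leaves implicit.
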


\begin{proof}
The result follows from It\^o formula \eqref{eq:Ito_formula_manifold_rig}, provided that there holds, for any $C^2$ function $h \colon N\rightarrow\tilde{H}$ (with $\tilde{H}$ separable Hilbert space),
\begin{align*}
\tr[D(Dh \circ T\varphi \circ \sigma) \circ \sigma(X_r)Q] = \tr[D(Dh \circ T\varphi\circ \sigma\circ \varphi^{-1}) \circ T\varphi\circ \sigma\circ \varphi^{-1}(Y_r)Q].
\end{align*}
But this follows from
\begin{align*}
D(Dh \circ T\varphi\circ \sigma) = D(Dh \circ T\varphi\circ \sigma \circ \varphi^{-1} \circ \varphi) = D(Dh \circ T\varphi\circ \sigma\circ \varphi^{-1}) \circ T\varphi.
\end{align*}
The proof is complete.
\end{proof}

\begin{rem}\label{rem:invariance_SDE_map}
As one sees from the proof, we can relax the assumption that $\varphi$ is a $C^2$ diffeomorphism, requiring instead the following condition: $\varphi\colon U\rightarrow V$ is $C^2$ and there exist a continuous section $\tilde{b}\colon V\rightarrow TN$ and a $C^1$ section $\tilde{\sigma}\colon V\rightarrow L(E;TN)$ such that, for every $x$ in $U$, $\tilde{b}\circ \varphi(x) = T\varphi \circ b(x)$ and $\tilde{\sigma}\circ \varphi(x) = T\varphi \circ \sigma(x)$. In this case, $Y=\varphi(X)$ satisfies
\begin{align*}
dY = \tilde{b}(Y) dt +\tilde{\sigma}(Y) \bullet dW.
\end{align*}
We can also relax the assumptions on $b$ and $\tilde{b}$, requiring that $b$ and $\tilde{b}$ are Borel sections and $b(X)$ and $\tilde{b}(Y)$ coincide $P$-a.s. and are continuous in time $P$-a.s.
\end{rem}

As a consequence, taking $\varphi=\psi_\alpha$ (where $(\psi_\alpha)$ is a countable atlas of smooth charts), we get the expression of the SDE \eqref{eq:SDE_man} in chart and the invariance of the solution under change of chart. In the following, for any $\alpha$, we call
\begin{align*}
& b^\alpha\colon  V_\alpha\rightarrow H, \quad b^\alpha = D\psi_\alpha \circ b \circ \psi_\alpha^{-1},\\
& \sigma^\alpha\colon  V_\alpha\rightarrow L(E,H), \quad \sigma^\alpha = D\psi_\alpha \circ \sigma \circ \psi_\alpha^{-1}.
\end{align*}

\begin{cor}[Invariance under change of chart]\label{cor:change_chart}
Let $X$ be a $M$-valued process on $[0,\tau)$ (or $[0,\tau]$) such that, for some $\alpha$, $X$ takes values in $O_\alpha$ $P$-a.s.. Then $X$ solves \eqref{eq:SDE_man} if and only if $X^\alpha_t = \psi_\alpha(X_t)$ solves
\begin{align}
\begin{aligned}\label{eq:SDE_man_chart}
& dX^\alpha_t = b^\alpha(X^\alpha_t) dt +\sigma^\alpha(X^\alpha_t) \bullet dW_t,\\
& X^\alpha_0 = \psi_\alpha(\xi).
\end{aligned}
\end{align}
In particular, if, for some $\alpha$ and $\beta$, \eqref{eq:SDE_man_chart} holds and $X$ takes values in $O_\alpha\cap O_\beta$ $P$-a.s., then \eqref{eq:SDE_man_chart} holds also with $\beta$ in place of $\alpha$.
\end{cor}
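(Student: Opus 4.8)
The plan is to deduce everything from Lemma~\ref{lem:invariance_SDE_diffeo}, since the chart maps are by construction smooth diffeomorphisms onto their images, so essentially no new stochastic work is needed. First I would record that for each $\alpha$ the chart $\psi_\alpha \colon O_\alpha \to V_\alpha$ is a $C^\infty$ (in particular $C^2$) diffeomorphism between an open subset of $M$ and an open subset $V_\alpha$ of the model space $H$, and that the hypothesis ``$X$ takes values in $O_\alpha$ $P$-a.s.'' is exactly the condition required to invoke the lemma on the open set $U=O_\alpha$.

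For the forward implication, suppose $X$ solves \eqref{eq:SDE_man} and takes values in $O_\alpha$. Applying Lemma~\ref{lem:invariance_SDE_diffeo} with $\varphi=\psi_\alpha$ and with $V_\alpha$ regarded as a Hilbert manifold open in $H$ yields that $X^\alpha=\psi_\alpha(X)$ solves
\begin{align*}
dX^\alpha = T\psi_\alpha\circ b\circ\psi_\alpha^{-1}(X^\alpha)\,dt + T\psi_\alpha\circ\sigma\circ\psi_\alpha^{-1}(X^\alpha)\bullet dW.
\end{align*}
The only remaining point is to recognise the intrinsic coefficients on the right as $b^\alpha$ and $\sigma^\alpha$. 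Under the canonical trivialisation $TV_\alpha\cong V_\alpha\times H$ the principal part of a section is obtained by applying $\text{pr}_2$; since $\text{pr}_2\circ T\psi_\alpha = D\psi_\alpha$, one reads off $T\psi_\alpha\circ b\circ\psi_\alpha^{-1}=b^\alpha$ and, postcomposing the $L(E;TM)$-valued map $\sigma$, $T\psi_\alpha\circ\sigma\circ\psi_\alpha^{-1}=\sigma^\alpha$, which are precisely the definitions preceding the statement. This bookkeeping between $T\psi_\alpha$ and its principal part $D\psi_\alpha$, together with the trivialisation of $TM$ over a chart, is the only place requiring genuine care, and it is routine. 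The converse implication follows by the identical argument applied to $\varphi=\psi_\alpha^{-1}$, again a $C^2$ diffeomorphism; here ``$X^\alpha$ takes values in $V_\alpha$'' is equivalent to ``$X=\psi_\alpha^{-1}(X^\alpha)$ takes values in $O_\alpha$'', so the lemma applies and returns \eqref{eq:SDE_man}, the transformed coefficients collapsing back to $b$ and $\sigma$ by the same identification.

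Finally, for invariance under change of chart, suppose \eqref{eq:SDE_man_chart} holds for $\alpha$ and $X$ takes values in $O_\alpha\cap O_\beta$ $P$-a.s. By the converse implication just established, $X$ solves \eqref{eq:SDE_man} on $M$; since $X$ in fact takes values in $O_\beta$ $P$-a.s., the forward implication applied with $\beta$ shows that $X^\beta=\psi_\beta(X)$ solves \eqref{eq:SDE_man_chart} with $\beta$ in place of $\alpha$. I expect no real obstacle beyond the coefficient identification highlighted above: all measurability, continuity, and stopping-time requirements are inherited verbatim from Lemma~\ref{lem:invariance_SDE_diffeo}, and no additional localisation is needed because the lemma is already stated for processes valued in an open set.
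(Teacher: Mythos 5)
Your proof is correct and takes essentially the same route as the paper, which states the corollary as an immediate consequence of Lemma~\ref{lem:invariance_SDE_diffeo} applied with $\varphi=\psi_\alpha$ (and $\varphi=\psi_\alpha^{-1}$ for the converse), identifying $T\psi_\alpha\circ b\circ\psi_\alpha^{-1}$ and $T\psi_\alpha\circ\sigma\circ\psi_\alpha^{-1}$ with the principal-part coefficients $b^\alpha$ and $\sigma^\alpha$ exactly as you do. The change-of-chart claim then follows by chaining the two implications through $O_\alpha\cap O_\beta$, as in your last paragraph.
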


Now we give the main local well-posedness result:

\begin{thm}\label{thm:local_abstract_SDE_result}
Assume that there exists an open set $U$ in $M$ and an index $\alpha$, with $\bar{U} \subseteq O_\alpha$, such that $X_0=\xi$ is in $U$ $P$-a.s., $b^\alpha$ is in $C^{0,1}$ on $\psi_\alpha(\bar{U})$ and $\sigma^\alpha$ is in $C^{1,1}$ on $\psi_\alpha(\bar{U})$. Then existence and uniqueness up to the first exit time from $U$ hold for \eqref{eq:SDE_man}, that is, for every $T>0$: there exists a solution $X$ on $[0,\tau_U\wedge T]$, where $\tau_U$ is the exit time of $X$ from $U$, of the SDE \eqref{eq:SDE_man} and, if $\tilde{X}$ is another solution defined on $[0,\tilde{\tau}]$, then $\tilde{X}=X$ on $[0,\tilde{\tau}\wedge\tau_U\wedge T]$ $P$-a.s.; moreover $\tau_U>0$ $P$-a.s..
\end{thm}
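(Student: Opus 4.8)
The plan is to reduce the manifold-valued equation to a standard It\^o SDE on the model Hilbert space $H$ via the chart $\psi_\alpha$, invoke the Hilbert-space existence and uniqueness theory of Appendix~\ref{App:stochastic}, and transfer the solution back using Corollary~\ref{cor:change_chart}. By that corollary, as long as the process stays in $O_\alpha$, solving \eqref{eq:SDE_man} is equivalent to solving the chart equation \eqref{eq:SDE_man_chart} for $X^\alpha = \psi_\alpha(X)$. Rewriting the Stratonovich integral there in It\^o form (by the definition of $\int \sigma^\alpha \bullet dW$, with the underlying diffusion being $\sigma^\alpha$ itself), the process $X^\alpha$ must solve the It\^o SDE
\begin{align*}
dX^\alpha_t = \tilde{b}^\alpha(X^\alpha_t)\,dt + \sigma^\alpha(X^\alpha_t)\,dW_t, \qquad X^\alpha_0 = \psi_\alpha(\xi),
\end{align*}
with It\^o drift $\tilde{b}^\alpha = b^\alpha + \frac12 \tr[D\sigma^\alpha \circ \sigma^\alpha\, Q]$.

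The crucial step is to verify that $\tilde{b}^\alpha$ and $\sigma^\alpha$ are Lipschitz on $\psi_\alpha(\bar U)$. Since $b^\alpha \in C^{0,1}$ it is Lipschitz, and since $\sigma^\alpha \in C^{1,1}$ both $\sigma^\alpha$ and $D\sigma^\alpha$ are bounded and Lipschitz there. The correction term is a composition of $D\sigma^\alpha$ and $\sigma^\alpha$ followed by the bounded linear trace pairing against the trace-class operator $Q$, hence Lipschitz; consequently $\tilde{b}^\alpha$ is Lipschitz. This is exactly where the hypothesis $\sigma^\alpha \in C^{1,1}$ (rather than merely $C^1$) is used: Lipschitzness of $D\sigma^\alpha$ is what renders the It\^o drift Lipschitz, and hence the equation well-posed.

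Next I would pass from local to global coefficients. Using a $C^2$ cutoff on $H$ (bump functions exist on Hilbert spaces, cf.\ \cite[16.16.~Corollary]{MR1471480}) equal to $1$ on $\psi_\alpha(U)$ and supported in $\psi_\alpha(\bar U)$, together with the boundedness of the chart derivatives from the standing setup, I extend $\tilde{b}^\alpha, \sigma^\alpha$ to globally Lipschitz coefficients $\hat{b}, \hat{\sigma}$ on all of $H$ agreeing with the originals on $\psi_\alpha(U)$. The Hilbert-space It\^o theory of Appendix~\ref{App:stochastic} then yields, for each $T>0$, a unique global strong solution $\hat{X}$ on $[0,T]$ with $\hat{X}_0 = \psi_\alpha(\xi)$. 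Letting $\tau_U$ be the first exit time of $\hat X$ from $\psi_\alpha(U)$, on $[0,\tau_U)$ the process stays where $\hat b, \hat\sigma$ coincide with $\tilde b^\alpha, \sigma^\alpha$, so $\hat X$ solves the chart It\^o equation, hence the chart Stratonovich equation \eqref{eq:SDE_man_chart}; by Corollary~\ref{cor:change_chart} the pullback $X := \psi_\alpha^{-1}(\hat X)$ solves \eqref{eq:SDE_man} on $[0, \tau_U \wedge T]$, and $\tau_U$ is precisely the exit time of $X$ from $U$. Since $\xi \in U$ with $U$ open $P$-a.s.\ and $X$ has continuous paths, we get $\tau_U > 0$ $P$-a.s.

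Finally, for uniqueness, if $\tilde X$ is another solution on $[0,\tilde\tau]$ then, while it remains in $U$, the pushforward $\psi_\alpha(\tilde X)$ solves the chart It\^o equation with coefficients agreeing with $\hat b, \hat \sigma$; uniqueness for the Hilbert-space SDE forces $\psi_\alpha(\tilde X) = \hat X$, hence $\tilde X = X$, on $[0, \tilde\tau \wedge \tau_U \wedge T]$ $P$-a.s. The main obstacle I anticipate lies in the regularity bookkeeping of the crucial step: making rigorous that the It\^o--Stratonovich correction $\tr[D\sigma^\alpha \circ \sigma^\alpha\, Q]$ is well-defined as a genuinely convergent (trace-class) sum and Lipschitz in $x$, and checking that the global extension preserves Lipschitz constants so that the appendix's theorem applies verbatim.
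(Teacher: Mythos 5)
Your overall route is the same as the paper's: reduce \eqref{eq:SDE_man} to the chart equation \eqref{eq:SDE_man_chart} via Corollary~\ref{cor:change_chart}, solve on the model Hilbert space, and pull the solution back through $\psi_\alpha^{-1}$. The paper does exactly this, except that it simply invokes the appendix result, Theorem~\ref{thm:wellpos_SDE_Strat}, whose hypotheses ($U$ bounded open in $H$, drift $C^{0,1}$ and diffusion $C^{1,1}$ on $\bar U$) are tailored to the present situation, whereas you re-derive that result inline (Stratonovich-to-It\^o conversion, Lipschitz check on the correction term, cutoff and globalization, localization). The It\^o-form drift and the Lipschitz estimate for $\tr[D\sigma^\alpha\circ\sigma^\alpha\, Q]$ that you sketch are precisely the content of the paper's proof of Theorem~\ref{thm:wellpos_SDE_Strat}, so that part of your plan is sound, and your identification of where $C^{1,1}$ (rather than $C^1$) is needed is correct.

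However, your globalization step as written fails: there is no $C^2$ (or even continuous) cutoff on $H$ that equals $1$ on the open set $\psi_\alpha(U)$ and is supported in $\psi_\alpha(\bar U)$. Any continuous function equal to $1$ on $\psi_\alpha(U)$ equals $1$ on the closure $\overline{\psi_\alpha(U)}$ by continuity, so requiring it to vanish outside $\psi_\alpha(\bar U)=\overline{\psi_\alpha(U)}$ would force it to be the indicator function of that closed set, which is discontinuous on its boundary. Consequently the globally Lipschitz coefficients $\hat b,\hat\sigma$ you construct do not exist as described. The paper's proof of Theorem~\ref{thm:wellpos_SDE_Ito} avoids this by using a \emph{sequence} of cutoffs $\varphi_n$ equal to $1$ on the shrunken sets $U_n=\{x\in H \mid \mathrm{dist}(x,U^c)>1/n\}$ (whose closures lie inside $U$) and vanishing on $U^c$, solving the globalized SDE for each $n$, observing via the local uniqueness statement (Lemma~\ref{lem:uniq_SDE_Ito}) that these solutions agree up to the exit times from $U_n$, and passing to the limit $n\to\infty$. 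If you repair your argument this way — or, more simply, cite Theorem~\ref{thm:wellpos_SDE_Strat} as a black box, as the paper does — the remainder of your proof, in particular the uniqueness step and the positivity of $\tau_U$, goes through as you wrote it.
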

% $b^\alpha$ is in $C^{0,1}$ on $\psi_\alpha(\bar{U})$ and $\sigma^\alpha$ is in $C^{1,1}$ on $\psi_\alpha(\bar{U})$

\begin{rem}
In Theorem \ref{thm:local_abstract_SDE_result} the requirements on the coefficients are formulated with respect to a manifold chart. To define intrinsically a Lipschitz section (independent of the chart), an auxiliary structure, like a strong Riemannian metric, is needed, cf.\ \cite{JaL14} for the finite-dimensional case. Working in charts we avoid a lengthy discussion or the strengthening of the requirements on the coefficients by requiring e.g.\ more orders of differentiability.  
\end{rem}

\begin{proof}[Proof of Theorem \ref{thm:local_abstract_SDE_result}]
The result follows from existence and uniqueness on Hilbert spaces, Theorem \ref{thm:wellpos_SDE_Strat}, via the equivalence between the SDE \eqref{eq:SDE_man} and its expression in chart \eqref{eq:SDE_man_chart}. Precisely, $\psi_\alpha(U)$ is an open bounded subset of $H$ (with $\psi_\alpha(\bar{U})=\bar{\psi_\alpha(U)}$), $b^\alpha$, $\sigma^\alpha$ are resp. $C^{0,1}$, $C^{1,1}$ on $\psi_\alpha(U)$ and $\psi_\alpha(\xi)$ is in $\psi_\alpha(U)$ $P$-a.s.. Hence, by Theorem  \ref{thm:wellpos_SDE_Strat}, there exists a (unique) solution $X^\alpha$ to the SDE \eqref{eq:SDE_man_chart}, on $[0,\tau]$, where $\tau$ is the exit time of $X^\alpha$ from $\psi_\alpha(U)$, and $\tau>0$ $P$-a.s.. Then, by Corollary \ref{cor:change_chart}, $X=\psi_\alpha^{-1}(X^\alpha)$ is a solution to \eqref{eq:SDE_man} on $[0,\tau]$ and $\tau$ is also the exit time of $X$ from $U$, which proves existence. Again by Corollary \ref{cor:change_chart}, for any other solution $\tilde{X}$ to \eqref{eq:SDE_man} on $[0,\tilde{\tau}]$, $\tilde{X}^\alpha=\psi_\alpha(\tilde{X})$ satisfies \eqref{eq:SDE_man_chart}, hence, by Theorem  \ref{thm:wellpos_SDE_Strat}, it coincides with $X^\alpha$ on $[0,\tilde{\tau}\wedge\tau]$ $P$-a.s., and so $X$ and $\tilde{X}$ coincide $P$-a.s., that is uniqueness. The proof is complete.
\end{proof}

\begin{rem}\label{rmk:man_randm_drift}
The invariance under diffeomorphism Lemma \ref{lem:invariance_SDE_diffeo}, Remark \ref{rem:invariance_SDE_map} and Corollary \ref{cor:change_chart} can be extended (with the same proof) to the case of a random drift, under the following assumption: given an accessible stopping time $\tau$ and an open set $U$ in $M$, the drift $b\colon [0,\tau)\times\Omega \times U \rightarrow TM$ is such that
\begin{itemize}
\item for every $x$ in $U$, $b(\cdot,\cdot,x)$ is progressively measurable,
\item it holds $P$-a.s.: for every $t$ in $[0,\tau)$, $b(t,\omega,\cdot)$ is a section on $U$, and
\item it holds $P$-a.s.: $(t,x)\mapsto b(t,\omega,x)$ is continuous.
\end{itemize}
Precisely, in Remark \ref{rem:invariance_SDE_map}, $\tilde{b}\colon [0,\tau)\times\Omega \times V \rightarrow TN$ satisfy the above assumptions on $V$ and it holds $P$-a.s.: for every $t$, $\tilde{b}(t,\omega,\cdot)\circ \varphi = T\varphi \circ b(t,\omega,\cdot)$.

Moreover, as in the flat case, the existence and uniqueness Theorem \ref{thm:local_abstract_SDE_result} can be extended to the case of a random drift, under the following assumption: given an accessible stopping time $\tau$ and an open set $U$ as in Theorem \ref{thm:local_abstract_SDE_result}, the drift $b\colon [0,\tau)\times\Omega\times U \rightarrow TM$ is such that
\begin{itemize}
\item for every $x$ in $\psi_\alpha(\bar{U})$, $b^\alpha(\cdot,\cdot,x)$ is progressively measurable,
\item it holds $P$-a.s.: for every $t$ in $[0,\tau)$, $b(t,\omega,\cdot)$ is a section on $U$,
\item it holds $P$-a.s.: $(t,x)\mapsto b^\alpha(t,\omega,x)$ is continuous, and
\item it holds $P$-a.s.: for every $t$ in $[0,\tau)$, $b^\alpha(t,\omega,\cdot)$ is Lipschitz continuous on $\psi_\alpha(\bar{U})$, uniformly with respect to $(t,\omega)$.
\end{itemize}
The proof is analogous, applying Theorem \ref{thm:wellpos_SDE_Strat} together with Remark \ref{rmk:Strat_random_drift}.
\end{rem}

\section{Vector fields on Sobolev diffeomorphisms}\label{sect:VF:Sobo:diff}

In this section, we establish the geometric setting in which we will solve the stochastic differential equations. Thus we leave the stochastic considerations of the last section for the time being and consider the geometry of the Hilbert manifold of Sobolev diffeomorphisms preserving a volume form. See also \cite{Glik11} for an introduction to the topic.
Our aim will be to construct certain second order vector fields on this manifold. This construction drives our later investigation, as the vector fields are crucial ingredients in the formulation of the stochastic differential equations we aim to investigate. 

\begin{setup}
All manifolds considered will be assumed to be smooth. Recall from \cite[Section 1]{MR2954043} that a $d$-dimensional manifold $K$ has smooth boundary, if it is locally homeomorphic to open subsets of $\overline{\R}^{d}_+ \coloneq \{ (x_1,\ldots, x_d) \in \R^d\mid x_d \geq 0\}$.
We fix a compact $d$-dimensional Riemannian manifold $(K,g_K)$ (possibly with $C^\infty$-boundary)\footnote{In case the boundary $\partial K$ is non-empty, note that the Riemannian metric turns $\partial K$ into a totally geodesic submanifold \cite[Lemma 6.4]{EM70}, i.e.\ a geodesic originating at a boundary point $k$ whose initial derivative is in $T_k \partial K$ stays in the boundary for all time.}.
 Denote by $\mu$ the volume form associated to the metric $g_K$. Further, $(N,g_N)$ will be another Riemannian manifold (possibly also with boundary).  
\end{setup}

%\subsection*{Sobolev morphisms on manifolds}

Spaces of mappings of Sobolev type between (subsets of) Euclidean space are well studied in the literature dealing with partial differential equations, see e.g.\ \cite{MR1163193,MR0248880}. The corresponding notion for manifold valued mappings is also classical, but much less well known \cite{IKT13,EM70,MR0248880,MR0198494}.

\begin{defn}\label{defn: nonlinear:Sobolev}
Fix $s > \tfrac{d}{2}$, where $d$ is the dimension of the compact manifold $K$ (possibly with boundary). Then a continuous map $f \colon K \rightarrow N$ between manifolds is \emph{locally of class $H^s$} around $k\in K$, if there exist a pair of charts $(\kappa, \lambda)$ around $k$ and $f(k)$ such that $\lambda \circ  f \circ \kappa^{-1}$ makes sense and is a mapping of Sobolev class $H^s$, whose distributional derivatives up to order $s$ are in $L^2$.
Then we define the space 
$$H^s (K,N) \coloneq \{f \colon K \rightarrow N \mid  f \text{ is locally of class } H^s \text{ for every } k \in K \}$$
of all $H^s$-Sobolev maps. We recall in Appendix \ref{app: Sobolev} that this space can be turned into a Hilbert manifold modelled on spaces $H^s (E)$ of Sobolev sections for certain vector bundles $\pi_E \colon E \rightarrow K$. For $E=TM$ we write $\mathfrak{X}^s (K) :=H^s (TM)$.  
\end{defn}

\begin{setup}[Warning]\label{warning} On a manifold, a map is of class $H^s$ if it is everywhere locally of class $H^s$. Note that the notion of locally being $H^s$ incorporates a boundedness concept on the derivatives which do not have intrinsic meaning on a manifold (without specifying a Riemannian structure). In particular, a map locally of class $H^s$ in some pair of charts might fail to be of class $H^s$ in another pair of charts, see \cite[3.1]{IKT13}.
\end{setup}

\begin{setup}\label{setup:DMU}
From now on we choose an $s> \frac{d}{2}+1$ and define the group of volume preserving $H^s$-Sobolev diffeomorphisms
$$\Dmu \coloneq \{g \in H^s (K,K) \mid g \text{ bijective with } g^{-1} \in H^s (K,K), g^*\mu = \mu\}.$$
For simplicity, we shall also assume without further notice that $s\in \N$. Though most of the results will generalise also to fractional Sobolev spaces (cf.\ Section~\ref{sect:above_beyond} for a discussion), the integer assumption allows us to conveniently cite most results needed in our approach.

It is well known (compare \cite{EM70} or Appendix \ref{app: Sobolev}) that $\Dmu$ is a Hilbert manifold modelled on the space of divergence free $H^s$-vector fields $\VFsmu$.
Now $\Dmu$ is a topological group for which the right multiplication operator is smooth (a so called half Lie group \cite{MR3836195}), that is here $\Phi\mapsto \comp(\Phi,\phi)$ is smooth for every $\phi$. Moreover, $\Dmu$ is a metrizable manifold such that every component is separable. For our purpose separability of components is sufficient (albeit we asked the whole manifold to be separable in Section~\ref{sect: SDE:HMfd}) since solutions of stochastic equations are continuous, whence they evolve in one component if the initial conditions live in one component.
\end{setup}

\begin{setup}
If $K$ is a manifold with smooth boundary, the model space of $\Dmu$ consists of all divergence free $H^s$-vector fields which are tangential to the boundary. While it is important to have the condition in the presence of boundary, it is of no consequence for the arguments we are about to develop. Hence for the rest of the article, we will suppress the boundary condition in our notation. Thus for $K$ (with or without boundary) we shall simply write $\VFsmu$ and assume that elements in this space are tangential to the boundary.
As is explained in Appendix \ref{app: Sobolev}, the group $\Dmu$ has the same properties we discussed in \ref{setup:DMU} in the boundary-less case.
\end{setup}
 
To generate a second order equation which corresponds to the Euler equation, we construct a vertical vector fields from elements of the tangent space at the identity. Following \cite[Section 11]{EM70}, we can combine this field with the metric spray to obtain the desired second order vector field (cf.\ \cite[IV, \S 3]{MR1666820}). Let us recall some facts on $\Dmu$ and its tangent bundle from Appendix~\ref{app: Sobolev}.

\begin{setup}
One can identify the tangent space at $\Phi \in \Dmu$ as follows:
$$T_\Phi \Dmu = \{X \circ \Phi \mid X \in \VFsmu \} = T_{\id}R_{\Phi} (\VFsmu)$$
Thus $T_{\id}R_{\Phi} \colon \VFsmu \rightarrow T_\Phi \Dmu$ is a continuous linear isomorphism (with inverse $T_{\Phi}R_{\Phi^{-1}})$) and we obtain a trivialisation of the tangent bundle via the homeomorphism\footnote{As $J$ is not even $C^1$ (due to $\Dmu$ being just a half-Lie group), it is \emph{not} a diffeomorphism.}
$$J\colon T\Dmu \rightarrow \VFsmu \times \Dmu ,\quad T_{\Phi} \Dmu \ni \eta_\Phi \mapsto (\eta_\Phi \circ \Phi^{-1}, \Phi).$$
The inverse of $J$ is given by the (continuous) \emph{composition map} 
$$\text{comp} \colon \VFsmu \times \Dmu \rightarrow T\Dmu , (V,\Phi) \mapsto V\circ \Phi.$$ 
\end{setup}
 
We stress here that the lack of differentiability of the maps $J$ and $\text{comp}$ is caused by the lack of differentiability of left composition, cf.\ \ref{setup: HL}. However, the composition allows us to extend elements of the tangent space at the identity to vector fields on $\Dmu$.

\begin{defn}\label{defn:assomap}
Let $V$ be a divergence-free $H^s$ vector field on $K$ (i.e.\ an element of $\VFsmu$). Define a continuous vector field $\bar{V} \in C(\Dmu,T\Dmu)$ on $\Dmu$ as
\begin{align*}
\bar{V}(\Phi) = \text{comp}(V,\Phi).
\end{align*} 
Furthermore, every such vector field yields a continuous map 
$$B_V \colon T\Dmu \rightarrow T\Dmu,\quad \eta \mapsto \bar{V} (\pi(\eta)),$$
where $\pi \colon T\Dmu \rightarrow \Dmu$ is the bundle projection.
\end{defn}

Due to \ref{setup: HL} the section $\bar{V}$ and thus also $B_V$ will be of class $C^k$ if $V \in H^{s+k}_\mu$.
By construction $B_V$ is fibre-preserving, i.e.\ $\pi(B_V (U)) = \pi (U), \forall U\in T\Dmu$. 
Thus we can construct a vector field on $T\Dmu$ from $B_V$ by the vertical lift \cite[p.55]{KMS}.

\begin{setup}\label{setup:VLIFT}
Recall that the collection $VT\Dmu \coloneq \bigcup_{z \in T^2\Dmu} \text{ker} T_z \pi$ is a vector subbundle of $T^2 \Dmu \rightarrow T\Dmu$, called the vertical bundle. In local coordinates (see e.g.\ \cite[X, \S 4]{MR1666820} for a detailed discussion), the vertical bundle is given by elements of the form $((x,v),(0,w))$. Recall that the vertical lift $$\text{vl}_{T{\Dmu}} \colon T\Dmu \oplus T\Dmu \rightarrow VT\Dmu, (v_x,u_x) \mapsto \left.\frac{d}{d t}\right|_{t=0} (v_x +tu_x),$$
is given, in a pair $(T\psi, T\psi), T^2\psi$ of charts, by $i_{\text{vert}} ((x,v),(x,u)) \coloneq ((x,v),(0,u))$, cf.\ e.g.\ \cite[6.11]{KMS}. Now $\text{vl}_{T{\Dmu}} \colon T\Dmu \oplus T\Dmu \rightarrow VT\Dmu$ is a smooth bundle isomorphism.
Since $B_V$ from Definition \ref{defn:assomap} is fibre-preserving, we obtain a vector field 
$$E_V \colon T\Dmu \rightarrow T^2\Dmu, E_V \coloneq \text{vl}_{T\Dmu} \circ (\id,B_V).$$
By construction $E_V$ is continuous and a $C^k$-vector field if $V \in H^{s+k}_\mu$. 
\end{setup}

We will now represent $E_V$ in local charts. Fix a manifold chart $\psi \colon \Dmu \supseteq O \rightarrow \psi (O) \subseteq \VFsmu$ such that the following properties are satisfied:
\begin{enumerate}
\item $\id \in O$ and $\psi (\id) = 0$ (the zero vector field)
\item $T_{\id} \psi \colon \VFsmu = T_{\id} \Dmu \rightarrow \VFsmu$ is the identity operator
\end{enumerate}
Let us now compute the representative $E^\psi_V \coloneq T^2\psi \circ E_V \circ T\psi^{-1}$ of $E_V$ in the chart $\psi$.
\begin{equation}\label{eq: incharts}
\begin{aligned}
E^\psi_V(\Phi,\eta) &= T^2\psi \circ \text{vl}_{T\Dmu} (T\psi^{-1} (\Phi,\eta) , B_V T\psi^{-1} (\Phi,\eta)) \\
 &= i_{\text{vert}}\circ (T\psi \oplus T\psi ) (T\psi^{-1} (\Phi,\eta) , B_V T\psi^{-1} (\Phi,\eta)) \\
 &= (\Phi,\eta, 0, T \psi \circ \comp (V (\pi \circ T\psi^{-1} (\Phi,\eta)) \\
 &= (\Phi,\eta, 0, T_{\psi^{-1} (\eta)} \psi (\comp (V, \psi^{-1} (\Phi)) 
\end{aligned}\end{equation}
where $\Phi \in \psi(O) \subseteq \Diff_\mu^s (K)$. Thus for $V \in \VFslmu$, the principal part of the representative is determined by the $C^\ell$-mapping 
\begin{align}\label{eq:def:ekappal}
e^{\psi,\ell} \colon \VFslmu \times \psi (O) \rightarrow \VFsmu,\quad e^{\psi,\ell} (V , \Phi ) \coloneq  T_{\psi^{-1}(\eta)} \psi (\comp (V , \psi^{-1} (\Phi)).\end{align}
%In the following, we will write $e^{\kappa,\ell}_\sigma \coloneq e^{\kappa,\ell} (\sigma , \cdot)$ if we wish to emphasise the r\^{o}le of $\sigma$ as a parameter. In particular, the derivatives appearing in the following are supposed to be taken only with respect to $\eta$, i.e.\ $D^ke^{\kappa,\ell}_\sigma = D^k_2 e^{\kappa,\ell} (\sigma,\cdot)$. 
The key result to obtain regularity will be a Lipschitz estimate for $e^{\psi,\ell}$ and its derivatives. We will deduce these properties from the following technical lemma:

\begin{lem}\label{lem: expo:lin}
Let $\mathsf E,\mathsf F,\mathsf G$ be Banach spaces and $U \subseteq \mathsf E$ an open subset. Assume that $F \colon U \times \mathsf F \rightarrow \mathsf G$ is a $C^{k,1}_{\text{loc}}$-map such that 
for every $x\in U$ the map $F(x,\cdot) \colon \mathsf F \rightarrow \mathsf G$ is linear.
Then the map $F^\wedge \colon U \rightarrow L(\mathsf F,\mathsf G), x \mapsto F(x,\cdot)$ makes sense and is a mapping of class $C^{k,1}_{\text{loc}}$ (i.e.\ of class $C^k$ with $k$th derivative being locally Lipschitz-continuous).
\end{lem}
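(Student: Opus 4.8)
The plan is to argue by induction on $k$, at each step expressing the regularity of $F^\wedge$ in terms of a partial derivative of $F$ that is again linear in its second slot but of one order lower. The device that makes everything work is that linearity of $F(x,\cdot)$ lets one rescale estimates valid only for small second arguments into estimates in the operator norm of $L(\mathsf F,\mathsf G)$; this is what replaces the (unavailable) compactness of the unit ball of $\mathsf F$. First note that $F^\wedge$ is well defined: for fixed $x$ the linear map $F(x,\cdot)$ is continuous as a slice of the jointly continuous $F$, hence bounded, so $F(x,\cdot)\in L(\mathsf F,\mathsf G)$.

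For the base case $k=0$ I would fix $x_0\in U$ and choose a neighbourhood $V\subseteq U$, a radius $\delta>0$ and a constant $L$ with $\|F(x,w)-F(x',w)\|\le L\|x-x'\|$ for $x,x'\in V$ and $\|w\|\le\delta$. Applying this to $\delta w$ and using $F(x,\delta w)=\delta F(x,w)$ gives $\|F(x,w)-F(x',w)\|\le (L/\delta)\|x-x'\|$ for every $\|w\|\le 1$; taking the supremum over the unit ball yields $\|F^\wedge(x)-F^\wedge(x')\|_{L(\mathsf F,\mathsf G)}\le (L/\delta)\|x-x'\|$, so $F^\wedge$ is locally Lipschitz, in particular $C^{0,1}_{\text{loc}}$.

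For the inductive step, assume the claim for $k-1$ and all Banach spaces. Writing $D_1F\colon U\times\mathsf F\to L(\mathsf E,\mathsf G)$ for the partial derivative in the first variable, we have $D_1F\in C^{k-1,1}_{\text{loc}}$; differentiating $F(x,\lambda v+v')=\lambda F(x,v)+F(x,v')$ in $x$ shows that $D_1F(x,\cdot)$ is again linear (and bounded, as a slice of the continuous $D_1F$). The induction hypothesis applied to $D_1F$ then gives that $(D_1F)^\wedge\colon U\to L(\mathsf F,L(\mathsf E,\mathsf G))$ is well defined and $C^{k-1,1}_{\text{loc}}$. I then claim $F^\wedge$ is differentiable with $D(F^\wedge)=\mathrm{flip}\circ (D_1F)^\wedge$, where $\mathrm{flip}\colon L(\mathsf F,L(\mathsf E,\mathsf G))\to L(\mathsf E,L(\mathsf F,\mathsf G))$ is the canonical isometric, hence smooth, argument-swap isomorphism. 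To verify this I use the fundamental theorem of calculus in the first slot to write the remainder $R(x,h,w)=F(x+h,w)-F(x,w)-D_1F(x,w)h=\int_0^1[D_1F(x+th,w)-D_1F(x,w)]h\,dt$, estimate $\|D_1F(x+th,w)-D_1F(x,w)\|_{L(\mathsf E,\mathsf G)}\le\|(D_1F)^\wedge(x+th)-(D_1F)^\wedge(x)\|\,\|w\|$, and conclude $\sup_{\|w\|\le1}\|R(x,h,w)\|\le\|h\|\sup_{t\in[0,1]}\|(D_1F)^\wedge(x+th)-(D_1F)^\wedge(x)\|=o(\|h\|)$ by continuity of $(D_1F)^\wedge$. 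Since $\mathrm{flip}$ is bounded linear, $D(F^\wedge)=\mathrm{flip}\circ(D_1F)^\wedge$ is $C^{k-1,1}_{\text{loc}}$, whence $F^\wedge\in C^{k,1}_{\text{loc}}$.

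The hard part is not the bookkeeping of the induction but precisely the two places where local data about $F$ must be upgraded to \emph{uniform} control over the whole unit ball of $\mathsf F$: the Lipschitz estimate in the base case and the uniformity in $w$ of the differentiability remainder. Both are resolved by exploiting linearity in the second variable — by rescaling in the first case and by factoring $\|w\|$ out of the remainder in the second — which is the essential reason the statement persists in infinite dimensions.
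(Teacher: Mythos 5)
Your proof is correct, but it takes a genuinely different route from the paper. The paper does not induct: it first observes that a $C^{k,1}_{\text{loc}}$-map between Banach spaces is $\mathcal{L}ip^k$ in the sense of convenient calculus, invokes the exponential law for $\mathcal{L}ip^k$-maps (Fr\"olicher--Kriegl, Theorem 4.3.5) to conclude that $F^\wedge$ is $\mathcal{L}ip^k$, extracts $C^{k-1}$-regularity of $F^\wedge$ from this via Kriegl--Michor, and only then argues directly at top order, identifying $D^kF^\wedge$ with $(D_1^kF)^\wedge$ and proving local Lipschitz continuity of the latter in the operator norm by a rescaling argument ($\overline v = \tfrac{R}{2}\tfrac{v}{\lVert v\rVert}$) that exploits linearity in the second slot. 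You instead run a self-contained induction on $k$ entirely within standard Banach-space calculus: your base case is exactly the paper's rescaling trick (applied at order zero rather than order $k$), and your inductive step constructs $D(F^\wedge)$ explicitly as $\mathrm{flip}\circ(D_1F)^\wedge$, with the fundamental theorem of calculus plus factoring $\lVert w\rVert$ out of the remainder supplying the uniformity over the unit ball of $\mathsf F$. What the paper's route buys is brevity modulo the cited machinery; what your route buys is independence from the $\mathcal{L}ip^k$/convenient-calculus literature --- which is not a small gain here, since the paper's own remark after the lemma explains that the authors deliberately avoided certain shortcuts in that literature because of a counterexample of Gl\"ockner in the non-normable setting, and had to tread carefully around which cited statements are safe. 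Your argument sidesteps those citation subtleties entirely, at the cost of slightly more bookkeeping (the verification that $D_1F$ is again $C^{k-1,1}_{\text{loc}}$ and linear in the second variable, which you assert correctly but could spell out via post-composition of $DF$ with the bounded linear restriction $L(\mathsf E\times\mathsf F,\mathsf G)\to L(\mathsf E,\mathsf G)$).
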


\begin{proof}
Let us note first that since $F$ is a mapping of class $C^{k,1}_{\text{loc}}$, the composition $F\circ c$ of $F$ with any $C^{k,1}_{loc}$-curve $c \colon \R \rightarrow U \times \mathsf E$ is again a curve of class $C^{k,1}_{loc}$. A mapping with this property is called $\mathcal{L}ip^k$ map (cf.\ \cite[Section 12]{MR1471480} or \cite{MR961256} for a detailed discussion). Clearly $F^\wedge \colon U \rightarrow L(\mathsf F,\mathsf G), x \mapsto F(x,\cdot)$ makes sense and by \cite[Theorem 4.3.5]{MR961256} $F^\wedge$ is again of class $\mathcal{L}ip^k$. Since $F^\wedge$ is $\mathcal{L}ip^k$, we can iteratively apply \cite[Theorem 12.8]{MR1471480} to see that all iterated directional derivatives $d^\ell F^\wedge \colon U \times \mathsf E^\ell \rightarrow L(\mathsf F,\mathsf G)$ for $\ell \leq k$ exist and are continuous, so $F^\wedge$ is a mapping of class $C^{k-1}$ \cite[Lemma A.3.3]{MR2952176}. 

To see that $F^\wedge \colon U \rightarrow L(\mathsf F,\mathsf G)$ is actually a $C^{k,1}_{\text{loc}}$ mapping we work with the directional derivatives. Recall from \cite[Proposition A.3.2]{MR2952176} that (as $F^\wedge$ is $C^{k-1}$) $F^\wedge$ will be a $C^k$-map if we can show that the mapping 
$$D^k F^\wedge \colon U \rightarrow L^k (\mathsf E,L(\mathsf F,\mathsf G)) , x \mapsto d^kF^\wedge (x;\cdot)$$ 
is continuous. 
To see this, we exploit that due to the construction of $F^\wedge (x) = F(x,\cdot)$, the $k$th directional derivative satisfies 
\begin{equation}\label{eq: relderiv}\begin{aligned}
d^k F^\wedge (x;v_1,\ldots, v_k) &= d^k F(x,\cdot; (v_1,0) , \ldots ,(v_k,0))\\
								 &= d_1^k F(x,\cdot; v_1, \ldots, v_k), \quad  \forall x\in U, \text{ and } v_1, \ldots , v_k \in E,
								 \end{aligned}
\end{equation}
where $d_1^k$ denotes the $k$th iterated partial derivative with respect to the first component of $F$. Since $F$ is of class $C^{k,1}_{\text{loc}}$, the partial derivative (cf.\ \cite[Proposition 3.5]{MR1666820}) $D^k_1 F \colon U \times \mathsf F \rightarrow L^k (\mathsf E ,\mathsf G), (x,y) \mapsto d^k_1 F (x,y;\cdot)$ is locally Lipschitz. Furthermore, an inductive argument easily shows that $D^k_1 F (x,\cdot)$ is linear for every fixed $x$.
Using \eqref{eq: relderiv} we thus observe that $D^kF^\wedge$ can be expressed as follows
\begin{align*}
(D^k_1F)^\wedge \colon U \rightarrow L(\mathsf F,L^k (\mathsf E, \mathsf G) \cong L^k (\mathsf E, L(\mathsf F,\mathsf G)), x \mapsto D^k_1 F(x,\cdot) = d^k_1 F (x,\cdot) = D^k F^\wedge.
\end{align*}
As a consequence, the computation shows that $F^\wedge$ will be of class $C^{k,1}_{\text{loc}}$ if $(D^k_1F)^\wedge$ is locally Lipschitz continuous. To this end, we note that \eqref{eq: relderiv} implies that $D^k_1 F$ is locally Lipschitz continuous as $F$ is a $C^{k,1}_{\text{loc}}$-map.
Thus for $(x_0,0) \in U \times \mathsf F$ there is $R:=R(x_0)>0$ and $L:=L(x_0) >0$ such that for all $x,y\in B_R(x_0)$ and $\lVert v\rVert , \lVert w\rVert < R$ we have 
$$\lVert D^kF(x,v) - D^kF (y,w) \rVert_{L^k (E,G)} \leq L \max \{\lVert x-y\rVert , \lVert v-w\rVert\}.$$
For $v\in \mathsf F\setminus \{0\}$ we define now $\overline{v} := \frac{R}{2} \frac{v}{\lVert v\rVert}$ and see that for $x,y \in B_R (x_0)$ we have
\begin{align*}\lVert D^kF(x,v) - D^kF (y,v) \rVert_{L^k (\mathsf E,\mathsf G)} &= \frac{2}{R} \lVert v\rVert \lVert D^kF(x,\overline{v}) - D^kF (y,\overline{v}) \rVert_{L^k (\mathsf E,\mathsf G)} \\
 &\leq \frac{2L}{R} \lVert v\rVert \lVert x-y\rVert .
\end{align*}
We conclude that $(D^kF)^\wedge$ is indeed locally Lipschitz continuous as the operator norm can be estimated as:
$\lVert (D^kF)^\wedge (x) - (D^kF)^\wedge (y)\rVert_{L(\mathsf F,L^k(\mathsf E,\mathsf G))} \leq \frac{2L}{R}\lVert x-y\rVert$.
\end{proof}

\begin{rem} (a)
It is essential that the map $F$ from Lemma \ref{lem: expo:lin} is $k$-times differentiable with $k$th derivative being locally Lipschitz. Weakening the Lipschitz assumption to mere continuity, the statement of Lemma \ref{lem: expo:lin} becomes false as \cite[12.13. Smolyanov’s Example]{MR1471480} shows. However, the converse statement (i.e.\ that $F$ is $C^k$ if $F^\wedge$ is $C^k$) is a standard result \cite[Proposition 3.10]{MR1666820} which does not hinge on Lipschitz continuity of the $k$th derivative.
\\
(b) Combining \cite[Proposition 4.3.16 and Theorem 4.3.27]{MR961256} one can deduce that between (open sets of) Banach spaces a $\mathcal{L}ip^k$-mapping is automatically of class $C^{k,1}_{\text{loc}}$. Using this result, the proof of Lemma \ref{lem: expo:lin} could have been considerably shorter. The reason we did not use this is that the cited results hinge on the following statement: A map $f\colon E\supseteq U \rightarrow F$ from a normed space to a locally convex space is $\mathcal{L}ip^0$ if and only if the mapping is locally Lipschitz \cite[Lemma 12.7]{MR1471480}. This is false if $F$ is non-normable as a counterexample due to H.\ Gl\"ockner shows (cf.\ the errata of \cite{MR1471480}). Studying the proof it was unclear to us whether the result holds for normed $F$ (this was established in \cite[Theorem 1.4.2]{MR961256} with essentially the same proof). Thus we chose to err on the side of caution and have avoided using these results.   
\end{rem}

We can now deduce from Lemma \ref{lem: expo:lin} the regularity of $e^{\psi,\ell}$.
Due to linearity of the bundle trivialisation $T\psi$ of $T\Dmu$ the $C^\ell$-map 
$$e^{\psi,\ell} \colon \VFslmu \times \psi (O) \rightarrow \VFsmu,\quad e^\psi (V , \eta ) =  T_{\psi^{-1}(\eta)} \psi (\comp (V , \psi^{-1} (\eta)).$$ 
is linear in $V$. We can thus deduce from Lemma \ref{lem: expo:lin} the following proposition (see also \cite[Chapter VIII Section 1]{Elw82} and \cite[Corollary 5.10]{BrzElw1996} for a similar result):

\begin{prop}\label{prop:VFisgood}
The $C^{\ell-1,1}_{\text{loc}}$ map $e^{\psi,\ell}$ (see \eqref{eq:def:ekappal}) gives rise to a $C^{\ell-1,1}_{\text{loc}}$-map 
$$(e^{\psi,\ell})^\wedge \colon \psi (O) \rightarrow L(\VFslmu , \VFsmu).$$
\end{prop}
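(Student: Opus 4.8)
The plan is to deduce the statement as a direct application of Lemma~\ref{lem: expo:lin}, which was designed precisely to turn a jointly regular map that is linear in one of its two slots into an operator-valued map of the same regularity class. First I would match the data of the proposition to the hypotheses of the lemma. Take $\mathsf E = \VFsmu$ with open subset $U = \psi(O) \subseteq \VFsmu$, take $\mathsf F = \VFslmu$ (the slot occupied by the vector field $V$), and $\mathsf G = \VFsmu$ (the target). The map $e^{\psi,\ell}$ carries its linear variable $V$ in the \emph{first} argument, whereas Lemma~\ref{lem: expo:lin} expects the linear variable in the second, so I would set $F(\Phi,V) \coloneq e^{\psi,\ell}(V,\Phi)$, i.e.\ simply transpose the two arguments; this transposition is a linear homeomorphism of the product spaces and hence preserves the differentiability class. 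With this convention, $F^\wedge(\Phi) = F(\Phi,\cdot) = e^{\psi,\ell}(\cdot,\Phi)$ is exactly $(e^{\psi,\ell})^\wedge(\Phi) \in L(\VFslmu,\VFsmu)$, so the operator-valued map in the statement is literally the $F^\wedge$ produced by the lemma.

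Next I would verify the two hypotheses. Linearity of $F(\Phi,\cdot)$ in $V$ is immediate: right composition $V \mapsto \comp(V,\psi^{-1}(\Phi)) = V \circ \psi^{-1}(\Phi)$ is linear in $V$, and postcomposition with the linear bundle trivialisation $T\psi$ preserves linearity, exactly as noted in the paragraph preceding the statement. For the regularity hypothesis I would use that $e^{\psi,\ell}$ is a $C^\ell$-map (as recorded when it was introduced in \eqref{eq:def:ekappal}) and observe that any $C^\ell$-map is automatically of class $C^{\ell-1,1}_{\text{loc}}$: its $(\ell-1)$st derivative is continuously differentiable, hence locally Lipschitz. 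Since transposing arguments does not affect the class, $F$ is $C^{\ell-1,1}_{\text{loc}}$ as well.

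With $k = \ell-1$, Lemma~\ref{lem: expo:lin} then yields directly that $F^\wedge = (e^{\psi,\ell})^\wedge \colon \psi(O) \rightarrow L(\VFslmu,\VFsmu)$ is of class $C^{\ell-1,1}_{\text{loc}}$, which is the assertion. I do not expect a genuine obstacle, since all the analytic difficulty has already been absorbed into Lemma~\ref{lem: expo:lin}; what remains is bookkeeping of the regularity index and of the two Banach-space structures involved. The only point worth flagging is \emph{why} the conclusion is $C^{\ell-1,1}_{\text{loc}}$ and not $C^\ell$: applying the lemma at level $k=\ell$ would require the top-order derivative of $e^{\psi,\ell}$ to be locally Lipschitz, whereas $C^\ell$-regularity only guarantees Lipschitzness of the derivative one order below the top, so one is forced down to $k=\ell-1$. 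Care is also needed to confirm that the domain genuinely has the product form $U \times \mathsf F$ of the lemma, with $U = \psi(O)$ open in the Hilbert space $\VFsmu$ and $\mathsf F = \VFslmu$ a full Banach space, so that no openness restriction in the $V$-variable interferes with taking $F(\Phi,\cdot)$ over all of $\VFslmu$.
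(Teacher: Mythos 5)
Your proposal is correct and follows essentially the same route as the paper: the paper likewise notes that $e^{\psi,\ell}$ is a $C^\ell$-map (hence $C^{\ell-1,1}_{\text{loc}}$) which is linear in $V$ due to linearity of right composition and of the trivialisation $T\psi$, and then deduces the proposition directly from Lemma~\ref{lem: expo:lin}. Your additional bookkeeping --- transposing the arguments to match the lemma's convention, and the remark on why one cannot run the lemma at level $k=\ell$ --- is exactly the (implicit) content of the paper's one-line argument and of its subsequent remark that $e^{\psi,\ell}$ fails to be $C^{\ell,1}_{\text{loc}}$.
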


We finally note that the mapping $e^{\psi,\ell}$ can not be expected to be of class $C^{\ell,1}_{\text{loc}}$. The reason for this is that $e^{\psi,\ell}$ essentially is given by the composition map of the half-Lie group $\Diff^s_\mu (K)$ and it is a folklore fact that composition with Sobolev vector fields of class $H^{s+\ell}$ is only $C^{\ell}$ but not $C^{\ell,1}_{\text{loc}}$.\footnote{If the composition were Lipschitz, the solution map of the Euler equation would be uniformly continuous, which is false, cf.\ \cite[Theorem 2.1]{HaM10}. We thank G.\ Misio\l{}ek for pointing this out.}

\section{Ebin-Marsden theory for the stochastic Euler equation}\label{sect: EM-SEuler}

In this section we combine the stochastic and geometric considerations developed in the last sections to obtain existence and uniqueness results for a stochastic version of the Euler equation for an incompressible fluid on a manifold.

\begin{setup}
As in Section~\ref{sect:VF:Sobo:diff}, we fix the following data: a compact (oriented) manifold $K$ (possibly with boundary), a Riemannian metric with associated volume form $\mu$ and a pressure function $p \in H^{s+1} (K,\R)$.
Now the \emph{classical Euler equation for an incompressible fluid} occupying $K$ is
\begin{equation}\label{Euler:classic}
\begin{cases}
\dot{u} + \nabla_u u = - \nabla p\\
\operatorname{div} u_t = 0 \text{ and } u_t \text{ tangential to } \partial K\\
u_0 \in \mathfrak{X}^s_\mu(K) %, f \in C^\infty (\R \times K) \text{given}
\end{cases}
\end{equation}
Moreover, we can consider the \emph{Euler equation with (deterministic) forces}, where we add to the right hand side of \eqref{Euler:classic} a forcing term $f \colon \R \times K \rightarrow TK$, $\pi \circ f (t,k) = k, \forall (t,k)$ of suitable regularity, i.e.\ $f$ should be continuous and of class $H^{s+2}$ with respect to $K$.
\end{setup}
Following an idea by Arnold \cite{Ar1966}, one can rewrite \eqref{Euler:classic} as an ordinary differential equation on the Hilbert manifold $\Dmu$. To do this, recall the following formal facts: First, if we consider $\Dmu$ as manifold embedded into $L^2(K;K)$, then, for any $\Phi$ in $\Dmu$, $T_\Phi \Dmu=\{V\circ \Phi\mid V \in \mathfrak{X}^s_\mu(K)\}$ is orthogonal to $\nabla g \circ \phi$ for any function $g\colon K\rightarrow \R$. Second, given a manifold $M$, a vector field $B$ on $T^2M$ is the geodesic spray, that is the curve $\gamma$ satisfying $\dot(\gamma,\dot\gamma) = B(\gamma,\dot\gamma)$ is a geodesic, if and only if the velocity component of $B(V)$ is $V$ and the acceleration component is orthogonal to $T_{\pi(V)}M$, for every $V$ in $TM$.

Note that, if $u$ satisfies the Euler equation, and $\Pi$ is the Leray projection on the divergence-free vector fields, then
\begin{align*}
(I-\Pi)[\nabla_u u] +\nabla p =0.
\end{align*}
Hence, if $\Phi$ is the flow solution to $\dot\Phi = u(t,\Phi)$, then $\Phi(t,\cdot)$ is measure-preserving (because $u$ is divergence-free), hence $\Phi$ is a curve on $\Dmu$.

\begin{setup}\label{setup:Eulerforce}
To treat the Euler equation with forces as a second order equation on the infinite-dimensional manifold $\Dmu$, we augment $f \in \VFsmu[s+2]$ to a right invariant vector field which we then vertically lift to a second order vector field with values in $T^2\Dmu$. Thus we consider the following map 
$$\mathcal{V}_f \colon \R \times T\Dmu \rightarrow T^2\Dmu , \mathcal{V}_f (t, V_\Phi) := \text{vl}_{T\Dmu} (f(t,\cdot) \circ \Phi).$$
Then one modifies the geodesic spray $B$ by defining $B_f := B + \mathcal{V}_f$, \cite[\S 11]{EM70}. By the chain rule, the second-order vector field $B_f$ satisfies
\begin{align*}
\frac{\dd}{\dd t}(\Phi,\dot\Phi) &= (\dot\Phi,-\nabla\rho(\Phi) +f(\Phi))  = (\dot\Phi,(I-\Pi)[\nabla_{\dot{\Phi}\circ\Phi^{-1}} \dot{\Phi}\circ\Phi^{-1}]) + (0,\comp(f,\Phi)) \\ &=: \tilde{B}(\Phi,\dot\Phi) +\mathcal{V}_f (\dot \Phi),
\end{align*}
Now the term $\tilde{B}(\eta)$ has velocity component $\eta$ and its acceleration component is orthogonal to $T_{\pi(\eta)}\Dmu$, hence $\tilde{B}$ coincide with the geodesic spray $B$.
\end{setup}
 The equation
\begin{align}\label{Euler:Lagrangian}
\frac{\dd}{\dd t}(\Phi,\dot\Phi) = B(\Phi,\dot\Phi) +\mathcal{V}_f(\dot \Phi) = B_f (\Phi, \dot \Phi),
\end{align}
is then called the \emph{Euler equation with forces in Lagrangian form}; we will call the classical Euler equation \eqref{Euler:classic} \emph{Euler equation in Eulerian form}.

In their seminal paper \cite{EM70}, Ebin and Marsden established smoothness of the spray of the $L^2$-metric, which is not automatic due to the metric being a weak Riemannian metric. Hence the Euler equation in Lagrangian form \eqref{Euler:Lagrangian} is an ordinary differential equation (in infinite dimension) with smooth drift, opposed to many standard PDEs, and so it is solvable, at least locally, by standard Banach manifold techniques.

Here we consider the classical Euler equation \eqref{Euler:classic} with an additive noise of the form $\dot{W}(t,k)$, Gaussian, white in time and smooth in space: we take the stochastic Euler equation with (deterministic and stochastic) forces in Eulerian form
\begin{align}
\begin{cases}\label{Euler:stochastic}
\partial_t u +\nabla_u u +\nabla p = f+  \dot{W}(t,k),\\
\operatorname{div} u = 0,
\end{cases}
\end{align}
where $W$ is a Brownian motion with values in a suitable function space and $f$ is as above a suitably regular forcing term. Hence the corresponding stochastic Euler equations in Lagrangian form reads formally
\begin{align}\label{Euler:stochastic_Lagrangian}
d(\Phi,\dot\Phi) = (B(\Phi,\dot\Phi)dt + \mathcal{V}_f (\dot \Phi))dt +(0,\comp(\cdot,\Phi))\bullet dW,
\end{align}
with $B$ the geodesic spray on $\Dmu$. We have used Stratonovich form here according to the ``transfer principle'' (see \cite{Bis1981}, \cite{Eme1990} and references therein), and ultimately because the Stratonovich chain rule has the same form of the classical chain rule, hence the formal computations in the deterministic case go through also in the stochastic case.

In the next subsection we show the first main result, that is existence and uniqueness for the stochastic Euler equation in Lagrangian form \eqref{Euler:stochastic_Lagrangian}. In the subsequent subsection we prove rigorously the link between the Eulerian and the Lagrangian form and derive the second main result, that is existence and uniqueness for the stochastic Euler equation in Eulerian form \eqref{Euler:stochastic}.

\subsection*{Local well-posedness in Lagrangian formulation}\addcontentsline{toc}{subsection}{Local well-posedness in Lagrangian formulation}

The objective of this section is to combine Theorem~\ref{thm:local_abstract_SDE_result} for SDE on Hilbert manifolds with Proposition~\ref{prop:VFisgood} for regularity of right translated vector fields to obtain local well-posedness for the stochastic Euler equation in Lagrangian form \eqref{Euler:stochastic_Lagrangian}. To deal with the diffusion term, we take a similar approach to \cite[Chapter VIII]{Elw82} and \cite{BrzElw1996}, which however are concerned with stochastic flows for finite-dimensional SDEs.

Now we define rigorously the drift and diffusion in \eqref{Euler:stochastic_Lagrangian}. We recall that $K$ is a compact (oriented) manifold $K$ (possibly with boundary), of dimension $d$, together with a Riemannian metric and its associated volume form $\mu$. We take $M=\Dmu$ with $s>d/2+1$. We assume to have a probability space $(\Omega,\cA,P)$ and a filtration $(\cF_t)_t$ as in \ref{setup:filtration_standard}.

The drift $B\colon T\Dmu \rightarrow T^2\Dmu$ in \eqref{Euler:stochastic_Lagrangian} is the geodesic spray associated to the right invariant $L^2$-metric on $\Dmu$.\footnote{Recall that the geodesic spray is the unique spray $F$ associated to a Riemannian metric such that a $C^2$-curve $\alpha$ is a geodesic if and only if $\frac{\dd^2}{\dd t^2} \alpha = F(\tfrac{\dd}{\dd t} \alpha)$, cf.\ \cite[IV \S 3 and VII \S 7]{MR1666820}} The smoothness of the drift is the main result by Ebin and Marsden:

\begin{setup}[{\cite[Theorem 11.2]{EM70}}]\label{thm:em70}
If $s>d/2+1$ then the geodesic spray $B$ on $\Diff_\mu^s(K)$ corresponding to the deterministic Euler equations is $C^\infty$. Furthermore, let $f \colon \R \rightarrow \mathfrak{X}_{\mu}^{s+1} (K)$ be continuous and consider $B_f := B + \mathcal{V}_f$ (with $\mathcal{V}_f$ as in \ref{setup:Eulerforce}). The second order vector field $B_f$ is associated to the Euler equation with forces is continuous and for every fixed $t$, $B_f(t,\cdot)$ is of class $C^1$. 
In particular, both $B$ and $B_f$ are of class $C^\infty$ (resp.\ $B_f(t,\cdot)$ is for $t$ fixed of class $C^{0,1}_{loc}$) in every chart.
\end{setup}

\begin{hp}\label{hp:noise}
Given $s^\prime$ non-negative integer, the process $W$ is a $Q$-Brownian motion (with respect to $(\cF_t)_t$) on $\mathfrak{X}^{s^\prime}_\mu(K)$, for some symmetric, positive-semidefinite and trace-class operator $Q$ on $\mathfrak{X}^{s^\prime}_\mu(K)$.
\end{hp}

For $s^\prime\ge s+2$ (without loss of generality, $s^\prime\ge s+2$), the diffusion coefficient in \eqref{Euler:stochastic_Lagrangian} is then $\Sigma\colon T\Dmu\rightarrow L(\mathfrak{X}^{s^\prime}_\mu(K);T^2\Dmu)$, where $\Sigma(\eta)V = E_V(\eta)$ for $\eta$ in $T\Dmu$ and $V$ in $\mathfrak{X}^{s^\prime}_\mu(K)$. We recall that
\begin{align*}
E_V(\eta) = \text{vl}_{T\Dmu} (\eta,\comp(V,\pi(\eta))).
\end{align*}

We assume that the initial datum $\eta_0$ satisfies $\pi(\eta_0)=\id$. The equation \eqref{Euler:stochastic_Lagrangian} now makes sense as SDE on the manifold $T\Dmu$, as in Definition \ref{def:SDE_man_sol} (as we will see in the proof of \ref{thm:main_Lagrangian}, the diffusion coefficient is $C^1$ in a neighbourhood of $\eta_0$).

We can now formulate our local well-posedness result:

\begin{thm}\label{thm:main_Lagrangian}
Fix $s>d/2+1$ and suppose Assumption \ref{hp:noise} with $s^\prime= s+2$, take $\eta_0$ in $\mathfrak{X}^s_\mu(K)$ with $\pi(\eta_0)=\id$ (the identity map). Then local strong well-posedness, in the sense of Theorem~\ref{thm:local_abstract_SDE_result}, holds for the Lagrangian formulation \eqref{Euler:stochastic_Lagrangian} of the stochastic Euler equations (with or without forces) on $T\Dmu$.
\end{thm}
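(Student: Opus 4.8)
The plan is to reduce the Lagrangian SDE \eqref{Euler:stochastic_Lagrangian} on the tangent bundle $T\Dmu$ to the abstract local well-posedness machinery of Theorem~\ref{thm:local_abstract_SDE_result}. To invoke that theorem I need a chart around the initial datum $\eta_0$ in which the drift coefficient is $C^{0,1}_{\mathrm{loc}}$ and the diffusion coefficient is $C^{1,1}_{\mathrm{loc}}$. The whole point of Sections~\ref{sect:VF:Sobo:diff} and the Ebin--Marsden result \ref{thm:em70} is to supply exactly these regularity estimates, so the proof is essentially an assembly of the pieces already in place.

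Concretely, first I would fix a chart $\psi$ on $\Dmu$ of the special form used before \eqref{eq: incharts} (with $\psi(\id)=0$ and $T_{\id}\psi=\id$), and take the induced chart $T\psi$ on $T\Dmu$ around $\eta_0$ (recall $\pi(\eta_0)=\id$). In this chart the drift $B_f=B+\mathcal V_f$ is the geodesic spray plus forcing term: by \ref{thm:em70} the spray $B$ is $C^\infty$, hence in particular $C^{0,1}_{\mathrm{loc}}$, and the forcing piece $\mathcal V_f$ is continuous with $B_f(t,\cdot)$ of class $C^{0,1}_{\mathrm{loc}}$ for each fixed $t$; since $f$ enters only through a time-dependent, progressively measurable but $C^{0,1}_{\mathrm{loc}}$-in-space drift, this fits the random-drift extension of Theorem~\ref{thm:local_abstract_SDE_result} recorded in Remark~\ref{rmk:man_randm_drift}. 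Next I would treat the diffusion $\Sigma(\eta)V=E_V(\eta)=\mathrm{vl}_{T\Dmu}(\eta,\comp(V,\pi(\eta)))$: in the chart $T\psi$ its principal part is, up to the linear vertical-lift identification, governed by the map $e^{\psi,\ell}$ of \eqref{eq:def:ekappal}, and Proposition~\ref{prop:VFisgood} tells us precisely that $(e^{\psi,\ell})^\wedge\colon\psi(O)\to L(\VFslmu,\VFsmu)$ is $C^{\ell-1,1}_{\mathrm{loc}}$. Taking $\ell=2$ (which is why Assumption~\ref{hp:noise} is imposed with $s'=s+2$) yields that the diffusion coefficient $\Sigma^{T\psi}$ is of class $C^{1,1}_{\mathrm{loc}}$, exactly the hypothesis needed.

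With both coefficients verified, I would choose a bounded open neighbourhood $U$ of $\eta_0$ with $\bar U$ contained in the chart domain, note that $\psi(\eta_0)$ lies in $\psi(U)$ $P$-a.s.\ since $\eta_0$ is deterministic at $\id$, and apply Theorem~\ref{thm:local_abstract_SDE_result} (in its random-drift form of Remark~\ref{rmk:man_randm_drift}) to obtain a local strong solution up to the exit time $\tau_U>0$, together with the stated uniqueness. This gives local well-posedness for \eqref{Euler:stochastic_Lagrangian} on $T\Dmu$ in the sense asserted.

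\textbf{The main obstacle} is organisational rather than computational: one must carefully package the diffusion coefficient so that the linearity in the noise variable $V$ is exploited. The operator $V\mapsto E_V(\eta)$ is linear, and it is only \emph{after} passing to the adjoint $(e^{\psi,\ell})^\wedge$ (the map $\eta\mapsto e^{\psi,\ell}(\cdot,\eta)\in L(\VFslmu,\VFsmu)$) that Lemma~\ref{lem: expo:lin} converts the mere $C^\ell$-regularity of composition into the $C^{\ell-1,1}_{\mathrm{loc}}$-regularity of the coefficient-as-operator-valued-map that Theorem~\ref{thm:local_abstract_SDE_result} demands. I expect the delicate point to be checking that the vertical lift and the bundle trivialisation $T\psi$ are linear isomorphisms compatible with this adjunction, so that the $C^{1,1}_{\mathrm{loc}}$ regularity of $(e^{\psi,\ell})^\wedge$ transfers verbatim to $\Sigma^{T\psi}$; everything else is a direct citation of \ref{thm:em70}, Proposition~\ref{prop:VFisgood}, and Remark~\ref{rmk:man_randm_drift}.
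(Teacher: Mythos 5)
Your proposal is correct and follows essentially the same route as the paper's proof: fix the chart $\psi$ from \ref{setup:VLIFT} (restricted to a bounded domain), cite \ref{thm:em70} for the $C^{0,1}$ regularity of the drift, cite Proposition~\ref{prop:VFisgood} with $\ell=2$ for the $C^{1,1}$ regularity of the diffusion coefficient $\Sigma^\psi=(e^{\psi,2})^\wedge$, and conclude via Theorem~\ref{thm:local_abstract_SDE_result}. Your explicit appeal to Remark~\ref{rmk:man_randm_drift} to handle the time-dependent forcing term is a slightly more careful bookkeeping of a point the paper's proof leaves implicit, but it does not change the argument.
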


\begin{proof}
We want to apply Theorem~\ref{thm:local_abstract_SDE_result}. We take a chart $\psi$ as in \ref{setup:VLIFT}; restricting the domain $O$ of $\psi$, we can assume that $O$ is an open bounded set and $\psi$ is bounded with its $100$ derivatives. Then $B^\psi$ is smooth, in particular $C^{0,1}$ (in the presence of a forcing term $f$ as in \ref{thm:em70} we consider instead $B_f^\psi$ which is of class $C^{0,1}$), on $\psi(O)$. Moreover $\Sigma^\psi = (e^{\psi,2})^\wedge$ is $C^{1,1}$ on $\psi(O)$ by Proposition \ref{prop:VFisgood}. Hence the result follows from Theorem~\ref{thm:local_abstract_SDE_result}.
\end{proof}

\subsection*{Link between Eulerian and Lagrangian formulations}\addcontentsline{toc}{subsection}{Link between Eulerian and Lagrangian formulations}

In this subsection we come back to the stochastic Euler equation in Eulerian form \eqref{Euler:Lagrangian}: we prove the rigorous link between the Eulerian form and the Lagrangian form and we derive the well-posedness result for the Eulerian form. 
Here we will establish the results first only for the Euler equation without deterministic forcing term $f$. This keeps the formulae simpler. However, we stress that the same results hold also in the presence of a forcing term $f$ (by taking a trivial modification of the proof), cf.\ Remark \ref{rem:forcingallowed} below.  
Let $\Pi$ denote the Leray projector on the divergence-free vector fields (cf.\ \cite[Section 5]{MR1849348} and \cite[Appendix A]{EM70}). On the noise, we take Assumption \ref{hp:noise} with $s^\prime\ge s-1$.

\begin{defn}
A local (strong and smooth) solution to the stochastic Euler equation in Eulerian form \eqref{Euler:stochastic} is an $\mathfrak{X}^s_\mu(K)$-valued $\cF$-progressively measurable process $u=(u(t))_{[0,\tau)}$, with $\tau$ accessible stopping time and $\tau>0$ $P$-a.s., with $P$-a.s. continuous paths in $\mathfrak{X}^s_\mu(K)$, such that
\begin{align}
u(t) = u(0) -\int_0^t \Pi[\nabla_{u(r)} u(r)] \, dr + W(t), \quad \forall t\in[0,\tau).\label{eq:Euler_def}
\end{align}
\end{defn}

\begin{rem}
For $s>d/2+1$, if $t\mapsto u(t)$ is a continuous path with values in $\mathfrak{X}^s_\mu(K)$, then, by Sobolev embedding, $t\mapsto u(t)$ is a continuous path with values in $C^1(K)$ (since $K$ is compact, $C^1(K)$ is a Banach space, cf.\ e.g.\ \cite[p.24]{MR0248880}). Hence $t\mapsto \nabla_{u(t)} u(t)$ and so $t\mapsto \Pi[\nabla_{u(t)} u(t)]$ are continuous paths in $H^{s-1}(K)$, in particular the integral of $\Pi[\nabla_{u(t)} u(t)]$ makes sense. Hence the equality \eqref{eq:Euler_def} holds in $H^{s-1}$.
\end{rem}

The link between the Eulerian and the Lagrangian form is proved in the following:

\begin{thm}\label{thm:equivalence_flow_pde}
The Eulerian form \eqref{Euler:stochastic} and the Lagrangian form \eqref{Euler:stochastic_Lagrangian} of the stochastic Euler equation are equivalent, in the following sense:
\begin{itemize}
\item Fix $s>d/2+3$ and assume that $W$ satisfies \ref{hp:noise} with $s^\prime= s+2$. If $\eta$ is a solution on $[0,\tau)$ to the Lagrangian form in $T\Diff^s_\mu(K)$, with $\pi(\eta_0)=\id$, then $u(t)=\eta(t)\circ \pi(\eta(t))^{-1}$ is a solution on $[0,\tau)$ to the Eulerian form in $\mathfrak{X}^s_\mu$.
\item Conversely, fix $s>d/2+4$ and assume that $W$ satisfies \ref{hp:noise} with $s^\prime=s-1$. If $u$ is a solution on $[0,\tau)$ to the Eulerian form in $\mathfrak{X}^s_\mu$ and $\Phi$ is the unique flow solution on $[0,\tau')$ (for some accessible $\tau'\le \tau$) to the (random) ODE
\begin{align*}
d\Phi(t) = u(t,\Phi(t)) dt,\quad \Phi(0)=\id,
\end{align*}
then $u(t)\circ \Phi(t)$ is a solution on $[0,\tau')$ to the Lagrangian form on $T\Diff^{s-3}_\mu(K)$.
\end{itemize}
\end{thm}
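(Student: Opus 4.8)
The plan is to treat the two implications separately but around a common principle: in both settings the pair $\eta=(\Phi,\dot\Phi)$ has the feature that no noise enters the base equation $\dd\Phi=\dot\Phi\,\dd t$, so $\Phi$ (and hence $\Phi^{-1}$) is a finite-variation, pathwise-differentiable process in time, while the genuinely stochastic part sits in the fibre $v=\dot\Phi$. This is what lets me push the non-smooth right-translation $\eta\mapsto(\dot\Phi\circ\Phi^{-1},\Phi)$ through the stochastic calculus, provided I descend to Sobolev spaces of lower order where composition and inversion become $C^2$ by the omega-lemma type estimates underlying Lemma~\ref{lem: expo:lin} and Proposition~\ref{prop:VFisgood}. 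The loss of orders incurred there is exactly what forces the regularity gaps $s>d/2+3$ and $s>d/2+4$.

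For the first implication (Lagrangian $\Rightarrow$ Eulerian) I would proceed as follows. Near the initial point $\pi(\eta_0)=\id$ the map $f\colon \eta=(\Phi,v)\mapsto v\circ\Phi^{-1}$, regarded as a map into $\VFsmu[s-2]$, is of class $C^2$: inversion and composition each cost Sobolev orders but gain differentiability, and losing two orders suffices; this is where $s>d/2+3$ (so that $s-2>d/2+1$) enters, in contrast to the footnote noting that $f$ fails to be even $C^1$ at top regularity. I then apply the manifold It\^o formula, Theorem~\ref{thm:Ito_formula_manifold}, to $f$ and the Lagrangian solution $\eta$. The diffusion term is the clean part: since $\sigma(\eta)V=\mathrm{vl}_{T\Dmu}(\eta,V\circ\Phi)$ is vertical, $Tf\circ\sigma(\eta)V=\big.\tfrac{\dd}{\dd\epsilon}\big|_{\epsilon=0}(v+\epsilon\,V\circ\Phi)\circ\Phi^{-1}=V$, independently of $\eta$; hence $Tf\circ\sigma$ is the constant inclusion $\VFsmu[s+2]\hookrightarrow\VFsmu[s-2]$, its Stratonovich integral carries no correction term and equals $W(t)$. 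The drift $\int_0^t Tf\circ B\,\dd r$ is the deterministic Ebin--Marsden computation of the velocity of $u=v\circ\Phi^{-1}$ along the geodesic flow, which by the discussion in~\ref{setup:Eulerforce} equals $-\int_0^t \Pi[\nabla_u u]\,\dd r$. Collecting terms yields \eqref{eq:Euler_def} in $\VFsmu[s-2]$; as $u=v\circ\Phi^{-1}\in\VFsmu$ and every summand in fact lies in $H^{s-1}$, the identity holds there, so $u$ is an Eulerian solution.

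For the converse (Eulerian $\Rightarrow$ Lagrangian) the flow $\Phi$ is a functional of the whole path of $u$, so Theorem~\ref{thm:Ito_formula_manifold} cannot be applied to a state map; instead I would verify the Lagrangian SDE on $T\Dmu[s-3]$ directly from Definition~\ref{def:SDE_man_sol}. The base component $\dd\Phi=v\,\dd t$ holds by construction of the flow, with $v=u\circ\Phi=\dot\Phi$. For the fibre component I would compute $\dd v=\dd(u\circ\Phi)$ by the It\^o/product rule for the right-composition operator $R_{\Phi(t)}$, which is linear and smooth in its $H^s$-argument and finite variation in $t$; no quadratic-covariation correction appears precisely because $\Phi$ is finite variation. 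Expanding $\dd(u\circ\Phi)=(\dd u)\circ\Phi+(\nabla_u u)\circ\Phi\,\dd t$ and inserting the Eulerian equation $\dd u=-\Pi[\nabla_u u]\,\dd t+\dd W$ gives
\begin{align*}
\dd v = \big(I-\Pi\big)[\nabla_u u]\circ\Phi\,\dd t + (\dd W)\circ\Phi .
\end{align*}
Since $v\circ\Phi^{-1}=u$, the drift is exactly the acceleration part of the geodesic spray $B(\Phi,v)$ of~\ref{setup:Eulerforce}, and $(\dd W)\circ\Phi=\sigma(\eta)\bullet\dd W$, so $\eta=(\Phi,v)$ solves \eqref{Euler:stochastic_Lagrangian}. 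Making the composition rule rigorous in $H^{s-3}$ (the nonlinearity $\nabla_u u$ costs one order, the $C^2$ composition map two more) is what demands $s-3>d/2+1$, i.e.\ $s>d/2+4$, and forces the drop to $T\Dmu[s-3]$; the noise level $s'=s-1=(s-3)+2$ is then precisely the one for which such a Lagrangian solution is admissible.

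The main obstacle in both directions is the same and is genuinely the crux: the trivialisation and the composition map are not $C^1$ at top regularity, so the transfer principle for Stratonovich integrals does not apply verbatim. I expect the delicate points to be (i) establishing the precise differentiability class of $f$ and of $u\mapsto u\circ\Phi$ into the lower Sobolev spaces, with locally Lipschitz top derivative, in the style of Proposition~\ref{prop:VFisgood}; and (ii) justifying that, because $\Phi$ is finite variation, the stochastic integral commutes with right composition and no spurious It\^o corrections are generated, so that the clean formal computations above are valid.
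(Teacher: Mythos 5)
Your first implication is essentially the paper's own proof: your map $f$ is the paper's $F\colon T\Dmu[s]\to\VFsmu[s-2]$, $V\circ\Phi\mapsto V$, shown to be $C^2$ in Lemma~\ref{lem:aux:deriv}; one applies the manifold It\^o formula to $u=F(\eta)$, observes that $TF\circ\Sigma$ is the constant inclusion (so the Stratonovich integral is just $W(t)$, with no correction), identifies $TF\circ B$ with the Eulerian drift via the localised spray formula \eqref{eq:geodesic_spray_EM}, and upgrades the resulting identity from $\VFsmu[s-2]$ to $\VFsmu[s-1]$ by continuity --- exactly your steps, with the same regularity bookkeeping.

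The gap is in the converse direction, and it is concrete. First, you treat the flow $\Phi$ as given; the paper's proof constructs it: one must show that the random drift $(t,\omega,\Phi)\mapsto\comp(u(t,\omega),\Phi)$ is, in a chart, Lipschitz in $\Phi$ uniformly in $(t,\omega)$ (this requires a compactness argument, via the Wallace lemma applied to a neighbourhood of the compact set $\{tu_0\mid t\in[0,1]\}\times\{0\}$), so that the random-drift version of Theorem~\ref{thm:local_abstract_SDE_result} (Remark~\ref{rmk:man_randm_drift}) yields existence and uniqueness of $\Phi$. Second, and more importantly, the ``It\^o/product rule for the right-composition operator'' that carries your whole fibre computation --- and whose justification you explicitly defer to your list of ``delicate points'' --- is not a tool available in this framework; it is the very thing that must be proved. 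The paper's device, absent from your proposal, is to regard $(u,\Phi)$ as a \emph{single} process satisfying the Stratonovich differential $d(u,\Phi)=(-\Pi[\nabla_u u],\,u\circ\Phi)\,dt+(I,0)\bullet dW$ on the product manifold $\VFsmu[s-1]\times\Dmu[s-3]$, and then to apply the manifold It\^o formula (Theorem~\ref{thm:Ito_formula_manifold}) to the $C^2$ composition map $G(V,\Phi)=V\circ\Phi$ of Lemma~\ref{lem:aux:deriv}. That computation is the rigorous form of your heuristic (linearity of $G$ in $u$ kills the $[u,u]$-correction, finite variation of $\Phi$ kills the rest), and it is also where the drift must be identified with care: the chain-rule term is $Tu\circ\dot\Phi$, not $(\nabla_u u)\circ\Phi$ --- in a chart these differ by Christoffel terms --- so matching the derived drift with the spray $B$ requires the localised Ebin--Marsden formula \eqref{eq:geodesic_spray_EM}, rather than the informal covariant computation of \ref{setup:Eulerforce} on which your sketch relies.
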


Before we establish the theorem, we need establish differentiability and identities for certain tangent mappings first. These will be used in the proof.

\begin{lem}\label{lem:aux:deriv}
The following mappings are of class $C^2$
\begin{align*}
 G \colon \VFsmu[s-1] \times \Dmu[s-3] &\rightarrow T\Dmu[s-3] \subseteq H^{s-3}(K,TK),\quad (V,\Phi) \mapsto V\circ \Phi,\\
 F\colon T\Dmu[s] &\rightarrow \VFsmu[s-2] , V\circ \Phi \mapsto V\circ \Phi \circ \Phi^{-1} =  V.
\end{align*}
their tangent mappings are (up to canonical identification) given by:
\begin{align}
TG ((X,Y),V \circ \Phi) = TX\circ V \circ \Phi + Y_X \circ \Phi \in H^{s-3} (K,T^2K) \label{eq:TG}\\ 
T_{V\circ \Phi}F (\zeta) = T_{(V\circ \Phi, \Phi^{-1})} \mathrm{comp} (\zeta, T _\Phi\mathrm{inv} \circ T\pi (\zeta)) = (\zeta - TV \circ T\pi (\zeta))\circ \Phi^{-1}.
\label{eq:TF}
\end{align}
Here $Y_X \colon K \rightarrow T^2K$ is the vertical field locally (in a chart $\kappa$) conjugate to $k \mapsto (k,X^\kappa(k),0,Y^\kappa(k))$. 
\end{lem}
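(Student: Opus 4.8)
The plan is to view both $G$ and $F$ as composition maps and to read off the asserted $C^2$-regularity from the quantitative omega lemma of Appendix~\ref{app: Sobolev}: composing a left factor of Sobolev class $H^{r+k}$ with an inner diffeomorphism of class $\Diff^r_\mu(K)$ yields a \emph{jointly} $C^k$ map into $H^r$. This is the quantitative form of the half-Lie group property \ref{setup: HL} already exploited in Definition~\ref{defn:assomap}, and the mechanism is always the same: one buys differentiability by spending Sobolev derivatives. For $G(V,\Phi)=V\circ\Phi$ the inner diffeomorphism lies in $\Dmu[s-3]$ while the outer field $V\in\VFsmu[s-1]$ carries exactly two derivatives more than the target regularity $s-3$; hence $G$ is $C^2$ into $H^{s-3}(K,TK)$, and since $V$ is divergence free and $\Phi$ volume preserving the image lands in $T\Dmu[s-3]$.

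To obtain $TG$ I would differentiate $V\circ\Phi$ in its two slots separately. Varying $\Phi$ along a tangent vector $\delta\Phi\in T_\Phi\Dmu[s-3]$ and applying the chain rule to $k\mapsto V(\Phi(k))$ produces the fibre-derivative term $TV\circ\delta\Phi$; varying $V$ along $X\in\VFsmu[s-1]$ at fixed $\Phi$ keeps the base point $\Phi(k)$ frozen and hence contributes a purely vertical vector, namely the vertical lift of $X\circ\Phi$ at the point $V\circ\Phi$. Assembling the two contributions in a chart of $T^2K$ and using the canonical identification that interchanges the two derivative directions gives the stated expression $TX\circ V\circ\Phi+Y_X\circ\Phi$, with $Y_X$ the vertical field built from the directional data as in the statement.

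For $F(V\circ\Phi)=(V\circ\Phi)\circ\Phi^{-1}=V$ I would factor $F=\comp\circ(\iota,\mathrm{inv}\circ\pi)$, where $\iota$ sends a tangent vector to the underlying $H^s$-map $K\to TK$, $\pi\colon T\Dmu[s]\to\Dmu[s]$ is the (smooth) bundle projection, and $\mathrm{inv}$ is group inversion. Inversion is only continuous on $\Dmu[s]$, but — spending two derivatives again — is $C^2$ as a map $\Dmu[s]\to\Dmu[s-2]$; composed with $\pi$ this makes $\Phi^{-1}$ a $C^2$ function of $\zeta=V\circ\Phi$ with values in $\Dmu[s-2]$. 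The outer composition $\comp\colon H^s(K,TK)\times\Dmu[s-2]\to H^{s-2}(K,TK)$ has two spare derivatives on its left factor and is therefore $C^2$, so $F$ is $C^2$ as a composite of $C^2$ maps, with image in $\VFsmu[s-2]$.

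Finally I would compute $TF$ by the chain rule applied to this factorisation, using the derivative of inversion $T_\Phi\mathrm{inv}(\delta\Phi)=-T\Phi^{-1}\circ\delta\Phi\circ\Phi^{-1}$ together with the two partial derivatives of $\comp$. Setting $\delta\Phi=T\pi(\zeta)$, the left-slot term is $\zeta\circ\Phi^{-1}$ and the inner-slot term carries the factor $TV\circ T\pi(\zeta)$; the two collapse to $(\zeta-TV\circ T\pi(\zeta))\circ\Phi^{-1}$, as claimed. The main obstacle throughout is the index bookkeeping: one must check at each step that the derivative losses from inversion and from the inner composition add up to exactly two, so that the composites stay $C^2$ into the indicated spaces. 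It is precisely the quantitative omega-lemma statements of Appendix~\ref{app: Sobolev} — rather than the bare continuity of $\comp$, $J$ and $\mathrm{inv}$ — that make this accounting close.
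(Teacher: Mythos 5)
Your proposal is correct and takes essentially the same route as the paper's own proof: the $C^2$-regularity of $G$ and $F$ is obtained from the quantitative differentiability of composition and inversion in \ref{setup: HL} (with $F=\comp\circ(\id,\mathrm{inv}\circ\pi)$, inversion $C^2$ as a map $\Dmu \rightarrow \Dmu[s-2]$, and two spare derivatives on the left factor of each composition), \eqref{eq:TG} is assembled as the left-composition (chain-rule) term plus the right-composition (vertical-lift) term, and \eqref{eq:TF} follows from the chain rule together with $T_\Phi\mathrm{inv}(\eta)=-T\Phi^{-1}\circ\eta\circ\Phi^{-1}$. The paper merely spells out in more detail the canonical identifications you invoke implicitly, namely $TH^{s}(K,K')=H^{s}(K,TK')$ and the vertical-lift embedding of $T\VFsmu[s-1]$ into $H^{s-1}(K,T^2K)$.
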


\begin{proof}
Recall from Appendix \ref{app: Sobolev}, \eqref{tangent:ident} that we can identify the tangent manifold of the manifold of Sobolev mappings as
\begin{align*}
T H^{s} (K,K') = H^s (K,TK') \text{ for all } s \geq \text{dim } K/2 +1.
\end{align*}
As $\Dmu[s-3] \subseteq H^{s-3} (K,K)$ is a submanifold, we use the inclusion to identify the tangent bundle of $\Dmu[s-3]$ as a subbundle of $TH^{s-3}(K,K)$.

By  restricting the composition $H^{s-1}(K,TK) \times \Dmu[s-3] \rightarrow H^{s-3}(K,TK)$ to a submanifold, we obtain $G$. Hence $G$  is of class $C^2$ as a map to $T\Dmu[s-3]$ (which we have identified as $TH^{s-3}(K,TK)|_{\Dmu[s-3]}$) by \ref{setup: HL} and \ref{setup:boundary}. It is well known that the tangent map of the composition is the sum of the tangent maps of left and right composition, \ref{setup: HL} and see e.g.\ \cite[Corollary 11.6]{MR583436}. We will use this to compute an explicit formula for
$$TG \colon T\VFsmu[s-1] \times T\Dmu[s-3] \rightarrow TH^{s-3}(K,TK) = H^{s-3} (K,T^2K).$$ As $\VFsmu[s-1] \subseteq H^{s-1}(K,TK)$ is a Hilbert space, $T\VFsmu[s-1] = \VFsmu[s-1]\times \VFsmu[s-1]$ holds. To combine this with the formula for the left and right composition \ref{setup: HL}, we need to identify $T\VFsmu[s-1]$ as a subspace of $TH^{s-1}(K,TK) = H^{s-1}(K,T^2K)$. By definition, any element $X$ in $\VFsmu[s-1]$ satisfies $\pi_K \circ X = \id_K$, for $\pi_K \colon TK \rightarrow K$ the bundle projection. Hence the differential of the embedding $\VFsmu[s-1] \subseteq H^{s-1}(K,TK)$ identifies $\VFsmu[s-1]^2 \rightarrow H^{s-1} (K,T^2K), (X,Y) \mapsto Y_X$ via the vertical lift \ref{setup:VLIFT}, i.e. in charts $T^2\kappa,\kappa$ ($\kappa$ being any generic chart of $K$), $Y_X$ is conjugate to a mapping $k \mapsto (k,X^\kappa(k),0,Y^\kappa(k))$, where $X^\kappa = \text{pr}_2 \circ T\kappa \circ X \circ \kappa^{-1}$ is the principal part of $X$. Consider $V\circ \Phi \in T_\Phi \Dmu[s-3]$ with $V\in \VFsmu[s-3]$. Then the identification together with the formulae for left and right composition \ref{setup: HL} yields \eqref{eq:TG}.

We now turn to the mapping $ F\colon T\Dmu[s] \rightarrow \VFsmu[s-2] , V\circ \Phi \mapsto V\circ \Phi \circ \Phi^{-1} =  V.$
Note that $F(V\circ \Phi) =\text{comp} (V \circ \Phi , \text{inv} (\pi_{T\Dmu[s]} (V\circ \Phi))$, where $\pi_{T\Dmu[s]}$ is the bundle projection, $\text{comp} \colon H^{s} (K,TK) \times \Dmu[s-3] \rightarrow H^{s-2}(K,TK)$ the composition map and $\text{inv} \colon \Dmu[s] \rightarrow \Dmu[s-2]$ the inversion map. As all of these mappings are at least of class $C^2$ by \ref{setup: HL}, we see that $F$ is $C^2$ as a map to the closed subvectorspace $\VFsmu[s-2] \subseteq H^{s-2}(K,TK)$. Now we wish to leverage the formulae for the tangent mappings of composition and inversion in \ref{setup: HL} to obtain a formula for the tangent of $F$. To this end, we identify $\VFsmu[s-1] \subseteq H^{s}(K,TK)$ and $\VFsmu[s]\times \VFsmu[s] = T\VFsmu[s]$ with mappings taking their image in the vertical part of the double tangent bundle. Identify $T^2\Dmu[s]\subseteq H^{s}(K,T^2K)$ and apply the chain rule. Then the derivative of $F$ viewed as an element in $H^{s}(K,T^2K)$ becomes for any $\zeta$ in $T^2\Dmu[s]\subseteq H^{s}(K,T^2K)$,
\eqref{eq:TF}
The symbol '$-$' in \eqref{eq:TF} means the operation in $T_{V\circ \Phi} T\Dmu[s-1]$ and we observe that since $V$ is a vector field, \eqref{eq:TF} indeed takes its values in the vertical part of $T^2K$.
\end{proof}

\begin{proof}[Proof of Theorem \ref{thm:equivalence_flow_pde}]
We start with the second statement. We are given $u$ solution to the stochastic Euler equations on $[0,\tau)$. We start constructing the Euler flow $\Phi$: we show that there exists a unique progressively measurable process $\Phi\colon [0,\tau^\prime)\times\Omega \rightarrow \Dmu[s-1]$, where $\tau^\prime$ is a suitable positive accessible stopping time, such that, $P$-a.s.,
\begin{align}
\dot{\Phi}(t) = u(t,\Phi(t)),\quad \forall t \in[0,\tau^\prime).\label{eq:ODE_position}
\end{align}
For this, we pick a chart $\psi_\alpha$ of $\Dmu[s-1]$, with domain $O_\alpha$, such that $\psi_\alpha (O_\alpha)$ is a bounded zero neighbourhood with $\psi_{\alpha} (\id) = 0$. Then the mapping 
\begin{align*}
\xi \colon \VFsmu[s]\times \psi_\alpha (O_\alpha) \ni (V,\Phi)\mapsto T\psi_\alpha \circ \text{comp}(V,\psi_\alpha^{-1} (\Phi)) \in TO \subseteq (\VFsmu[s-1])^2
\end{align*}
is $C^1$. We observe that for every fixed $u_0 \in \VFsmu[s]$ the norm $\lVert D\xi \rVert_{\text{op}}$ is bounded on the compact set $C \coloneq \{tu_0 \mid t \in [0,1]\} \times \{0\}$. Thus we find an open convex neighbourhood $U$ of $C$ such that the norm of $D\xi$ is bounded, whence $\xi$ is of class $C^{0,1}$ (with uniform Lipschitz constant) on $U$. Applying now the Wallace Lemma \cite[3.2.10]{Eng89}, we may shrink $U$ to a neighbourhood $U^\prime \times W$ of $C$ such that, for every $V$ in $U^\prime$, $\xi (V,\cdot)$ is $C^{0,1}$ on $W$ and the Lipschitz constant is uniformly bounded in $V \in U^\prime$. As a consequence, calling $\tau^\prime$ the minimum between $\tau$ and the (accessible) exit time of $u$ from $U^\prime$, the random drift
the random drift
\begin{align*}
[0,\tau^\prime)\times \Omega \times \Dmu[s-1]\ni (t,\omega,\Phi) \rightarrow \comp(u(t,\omega),\Phi) \in T\Dmu[s-1]
\end{align*}
is, in the chart $\psi_\alpha$, $P$-a.s. Lipschitz in $\Phi$, uniformly in $t$, and also continuous in $(t,\Phi)$, because $\xi$ is continuous and the path $t\mapsto u(t,\omega)$ is continuous in $\VFsmu[s]$. Therefore we can apply Theorem \ref{thm:local_abstract_SDE_result} and Remark \ref{rmk:man_randm_drift} with this random drift and with zero diffusion coefficient: possibly taking a smaller $\tau'$, we get existence and uniqueness on $[0,\tau^\prime)$ of a progressively measurable solution $\Phi$ to \eqref{eq:ODE_position}.

As a consequence, the process $(u,\Phi)$ satisfies the Stratonovich differential, on the manifold $\VFsmu[s-1] \times \Dmu[s-3]$,
\begin{align*}
d(u,\Phi) = (-\Pi[\nabla_{u} u],u\circ \Phi) dt +(I,0) \bullet dW,
\end{align*}
where $I\colon \VFsmu[s^\prime]\rightarrow \VFsmu[s-1]$ is the inclusion map. Note that the drift can be extended to a Borel function on $\VFsmu[s-1] \times \Dmu[s-3]$ (for example setting the drift equal to $(0,u\circ \Phi)$ for $u$ not in $\VFsmu[s]$), which fits into Remark \ref{rem:invariance_SDE_map}.

In view of It\^o formula, we need an expression for the derivative of the composition map $G(V,\Phi) = V\circ \Phi$ from Lemma \ref{lem:aux:deriv}. 
To give an explicit formula, we evaluate \eqref{eq:TG} in $k\in K$ and localise in a manifold chart $\kappa$ for $K$ around $\Phi(k)$. Writing $k_\star = \kappa (\Phi(k))$, \eqref{eq:TG} shows that $\text{ev}_k (TG((X,Y),V \circ \Phi))$ equals
\begin{equation}\label{eq:localstar}
  T^2\kappa^{-1} (k_\star, X^\kappa (k_\star), V^\kappa (k_\star), DX^\kappa (k_\star , V^\kappa (k_\star)) + Y^\kappa (k_\star)).
 \end{equation}
Apply the It\^o formula to see that the process $\eta(t)=u(t)\circ \Phi(t)$ satisfies
\begin{align*}
d\eta(t) = TG((u(t),-\Pi[\nabla_{u(t)} u(t)]),u(t)\circ \Phi) dt + TG((u(t),I),0\circ \Phi) \bullet dW_t,
\end{align*}
where $TG((V,I),0\circ \Phi)$ is the linear mapping in $L(\VFsmu[s^\prime],T_{V\circ \Phi}\Dmu[s-3])$ defined by $TG((V,I),0\circ \Phi) U =TG((V,U),0\circ \Phi)$ for every $U$ in $\VFsmu[s^\prime]$. In order to identify $TG ((V,-\Pi[\nabla_V V]),V\circ \Phi)$ with the drift $B(V\circ \Phi)$ in the Lagrangian form, we recall first from \cite[Proposition 14.2]{EM70} the formula for the spray of the geodesic equation evaluated at $\eta \in T_\Phi \Dmu[s-3]$ reads
\begin{align} \label{eq:geodesic_spray_EM}
B(\eta)=T(\eta\circ \Phi^{-1})\circ \eta -\text{vl}_{T\Dmu[s-3]}(\Pi (\nabla_{\eta\circ \Phi^{-1}} \eta\circ \Phi^{-1})) \circ \Phi.
\end{align}
Now we evaluate the drift $TG ((V,-\Pi[\nabla_V V]),V\circ \Phi)$ in $k$, and localize via \eqref{eq:localstar} in the chart $\kappa$: for every $V \in \VFsmu[s], \Phi \in \Dmu[s-3]$ and $k\in K$:% the formula for $\text{ev}_k (TG ((V,-\Pi[\nabla_V V]),V\circ \Phi))$ yields
\begin{align*}%\label{eq:geodesic_spray_EM_chart}
T^2\kappa^{-1} (k_\star,V^\kappa (k_\star) ,V^\kappa (k_\star),(DV \circ V)^\kappa (k_\star)  -(\Pi[\nabla_V V])^\kappa (k_\star)) &= B(V\circ \Phi)(k)
\end{align*}
As $k$ was arbitrary, we obtain the desired identity of the drifts for $V$ in $\VFsmu[s]$ (which is enough, since $u$ lives in $\VFsmu[s]$ $P$-a.s.). For the diffusion coefficient, we argue similarly to obtain for every $V \in \VFsmu[s-1]$ and $\Phi \in \Dmu[s-1]$, for every $U$ in $\VFsmu[s^\prime]$, for every $k\in K$,
\begin{align*}
\text{ev}_k (TG((V,U),0\circ \Phi)) = T^2 \kappa^{-1}(k_\star ,V^\kappa (k_\star) ,0,U^\kappa (k_\star)) = [\Sigma(V\circ \Phi)U](k),
\end{align*}
therefore also the diffusion coefficient $TG((V,I),0\circ \Phi)$ and $\Sigma(V\circ\Phi)$ coincide. Hence we apply Lemma \ref{lem:invariance_SDE_diffeo} and Remark \ref{rem:invariance_SDE_map} to obtain the Eulerian form. The proof of the second statement is complete.

Let us now consider the first statement: We are given a solution $\eta$ to the Eulerian form on $[0,\tau)$, with $\pi(\eta_0)=\id$, and use the auxiliary map 
$F(V\circ \Phi) := V\circ \Phi \circ \Phi^{-1} =  V$ from Lemma \ref{lem:aux:deriv}.
To turn \eqref{eq:TF} into an explicit formula for the derivative of $F$ as a mapping to $\VFsmu[s]\times \VFsmu[s]$, we need to localise $\zeta \in H^{s}(K,T^2K)$ in a chart. Thus let $\kappa$ be a chart around $k$ and take the chart representation of $\zeta \circ \Phi^{-1}$ as $Z^\kappa(\kappa(k)) = (\kappa(k) , V^\kappa , Z^{1,\kappa}  , Z^{2,\kappa})$ (where we suppress the argument $\kappa(k)$ for readability), where we used the identification $T^2 O_\kappa \cong (O_\kappa \times \R^d) \times (\R^d \times \R^d)$. This allows us to obtain a local formula for $DF$ as a mapping to $\VFsmu[s]\times \VFsmu[s]$ (the image viewed locally over $O_\kappa$):%
\begin{align}\label{Ftangent:localised}
D_{V \circ \Phi } F  (\zeta)(k) = %\text{pr}_{2,TK}T^2\kappa^{-1} 
(V^\kappa(\kappa(k))) ,Z^{2,\kappa}(\kappa(k)) -DV^\kappa\circ Z^{1,\kappa} (\kappa(k))),
\end{align}
for arbitrary $k$ in $K$.
Since $F$ is $C^2$ by Lemma \ref{lem:aux:deriv}, we can apply It\^o formula to $u=F(\eta)$ and get
\begin{align*}
du = DF \circ B(u\circ \Phi) dt + DF \circ \Sigma(u\circ \Phi) \bullet dW.
\end{align*}
To identify the drift $DF \circ B(V\circ \Phi)$ with the Eulerian drift $(V,-\Pi[\nabla_V V])$, we recall again formula \eqref{eq:geodesic_spray_EM} from \cite[Proposition 14.2]{EM70} and we localize $B(V\circ \Phi) \circ \Phi^{-1}$ in a chart $\kappa$ around $k$ (suppressing $\kappa(k)$ again):
\begin{align*}
[B(V\circ \Phi) \circ \Phi^{-1}]^\kappa (\kappa(k)) = (\kappa(k),V^\kappa,V^\kappa,(DV^\kappa)\circ V^\kappa-(\Pi[\nabla_V V])^\kappa).
\end{align*}
Now evaluate $TF \circ B(V\circ \Phi)$ in $k\in K$ using \eqref{Ftangent:localised}. The principal part becomes for every $V\circ \Phi \in T\Dmu[s]$ and $k$,
\begin{align}\label{eq:TFB}
DF \circ B(V\circ \Phi) (k) &= (V^\kappa,-\Pi[\nabla_V V]^\kappa)
\end{align}
where we have suppressed the argument $\kappa (k)$ on the right hand side. Now \eqref{eq:TFB} yields the desired identity for the drift. Likewise, for the diffusion coefficient, \eqref{Ftangent:localised} reduces for every $V\circ \Phi$, for every $U$ in $\VFsmu[s^\prime]$, for every $k\in K$, to
\begin{align*}
DF ( \Sigma(V\circ \Phi),U) (k) = (V^\kappa (\kappa (k)),U^\kappa(\kappa (k))),
\end{align*}
hence $DF (\Sigma(V\circ \Phi))$ coincide with the Eulerian diffusion coefficient $(V,I)$. We can now apply Lemma \ref{lem:invariance_SDE_diffeo} and Remark \ref{rem:invariance_SDE_map} to obtain the Eulerian form. The equation holds a priori on $\VFsmu[s-2]$, but $u$ and all the integrands are continuous paths in $\VFsmu[s-2]$, hence the equation holds on $\VFsmu[s-1]$ as well. The proof is complete.
\end{proof}

\begin{rem}
	Theorem~\ref{thm:equivalence_flow_pde} above can be seen as a stochastic analogue of \cite[Thm~14.4]{EM70}, where it is proven that the deterministic incompressible Euler equation with smooth forcing on $\mathfrak{X}_\mu^s(K)$ is equivalent to the deterministic Euler flow on $\Diff^s_\mu(K)$ if $s>d/2+2$.
	Notice, however, that in our Theorem~\ref{thm:equivalence_flow_pde} we do not obtain equivalence: solutions to the stochastic Lagrangian form give solutions to the stochastic Eulerian form only for $s>d/2+3$, and the other direction requires even $s>d/2+4$.
	This comes mainly from the $C^2$ regularity necessary in the stochastic analysis to apply the It\^o formula.
	% From the point of view of \cite[Thm~14.5]{EM70} this is somewhat expected: if viscosity is added as a force in the deterministic Euler equation (i.e. the Navier--Stokes equations) then $s>d/2+5$ is required for equivalence.
\end{rem}

\begin{cor}\label{cor:locwell}
Fix $s>d/2+4$ and assume that $W$ satisfies Assumption \ref{hp:noise} with $s^\prime =s+2$. Then, for every $u_0$ in $\VFsmu$, local strong well-posedness, in the sense of Theorem~\ref{thm:local_abstract_SDE_result}, holds for the stochastic Euler equation in Eulerian form \eqref{Euler:stochastic} among $\VFsmu$-valued solutions.
\end{cor}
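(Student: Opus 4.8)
The plan is to read off local well-posedness of the Eulerian form by transporting, through the two directions of Theorem~\ref{thm:equivalence_flow_pde}, the Lagrangian well-posedness of Theorem~\ref{thm:main_Lagrangian}; the only real care needed is in the Sobolev bookkeeping. First I note that if $W$ is a $Q$-Brownian motion on $\VFsmu[s+2]$ then, composing with the continuous inclusion $\VFsmu[s+2]\hookrightarrow\VFsmu[s']$ for any $s'\le s+2$, it is also a $Q'$-Brownian motion on $\VFsmu[s']$ with $Q'$ again symmetric, positive semidefinite and trace class. Hence Assumption~\ref{hp:noise} at level $s'=s+2$ supplies the noise hypotheses at every lower regularity appearing below.

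For existence, take $\eta_0=u_0\in\VFsmu=T_{\id}\Dmu$, so that $\pi(\eta_0)=\id$. Since $s>d/2+4>d/2+1$, Theorem~\ref{thm:main_Lagrangian} yields a local strong solution $\eta$ of the Lagrangian form \eqref{Euler:stochastic_Lagrangian} on $T\Dmu$, defined on some $[0,\tau)$ with $\tau>0$ $P$-a.s. As $s>d/2+3$, the first item of Theorem~\ref{thm:equivalence_flow_pde} then shows that $u(t)=\eta(t)\circ\pi(\eta(t))^{-1}$ is an $\VFsmu$-valued solution of \eqref{Euler:stochastic} on $[0,\tau)$ with $u(0)=u_0$.

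For uniqueness, let $u$ be any $\VFsmu$-valued solution of \eqref{Euler:stochastic} on $[0,\tau)$ with $u(0)=u_0$. Because $s>d/2+4$ and the noise may be regarded at level $s'=s-1$, the second item of Theorem~\ref{thm:equivalence_flow_pde} produces the Euler flow $\Phi$ on some $[0,\tau')$ and shows that $\eta:=u\circ\Phi$ solves the Lagrangian form on $T\Dmu[s-3]$, with $\eta(0)=u_0$ and $\pi(\eta(0))=\id$. I now invoke Theorem~\ref{thm:main_Lagrangian} at the reduced regularity level $s-3$: since $s-3>d/2+1$ and the noise, regarded on $\VFsmu[s-1]=\VFsmu[(s-3)+2]$, satisfies Assumption~\ref{hp:noise}, the Lagrangian form on $T\Dmu[s-3]$ is locally well-posed and in particular has a unique solution with the given initial datum. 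Consequently two $\VFsmu$-valued solutions $u^1,u^2$ of \eqref{Euler:stochastic} sharing the datum $u_0$ give rise, via this construction (the flow being uniquely determined by the ODE $d\Phi=u(t,\Phi)\,dt$), to Lagrangian solutions $\eta^1,\eta^2$ on $T\Dmu[s-3]$ with identical initial data; hence $\eta^1=\eta^2$ on their common existence interval, and reconstructing $u^i=\eta^i\circ\pi(\eta^i)^{-1}$ yields $u^1=u^2$ there. Together with the previous paragraph this is local strong well-posedness in the sense of Theorem~\ref{thm:local_abstract_SDE_result}.

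The main obstacle is precisely this regularity bookkeeping. Passing between the Eulerian and Lagrangian pictures uses the It\^o formula applied to the transition maps $G$ and $F$ of Lemma~\ref{lem:aux:deriv}, which are only of class $C^2$, so the two directions of Theorem~\ref{thm:equivalence_flow_pde} necessarily sit at different Sobolev orders ($s$ for Lagrangian-to-Eulerian, $s-3$ for Eulerian-to-Lagrangian). The point is therefore to apply Lagrangian uniqueness not at level $s$ but at the reduced level $s-3$, and to verify that the single hypothesis $s'=s+2$ remains strong enough after all the required inclusions. The constraint $s>d/2+4$ is exactly what keeps $s-3>d/2+1$, so that the Ebin--Marsden smoothness input and Theorem~\ref{thm:main_Lagrangian} stay available at the reduced level.
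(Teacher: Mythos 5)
Your proposal is correct and takes essentially the same route as the paper: the paper's own proof of this corollary is a single sentence invoking Theorem~\ref{thm:main_Lagrangian} (Lagrangian well-posedness) together with Theorem~\ref{thm:equivalence_flow_pde} (the Eulerian--Lagrangian link), which is precisely your existence-plus-uniqueness scheme. Your extra bookkeeping --- pushing the noise down along the continuous inclusions $\VFsmu[s+2]\hookrightarrow\VFsmu[s']$ and running the Lagrangian uniqueness at the reduced level $s-3$, which is exactly what forces $s>d/2+4$ --- correctly fills in what the paper leaves implicit.
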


\begin{proof}
The result follows from the well-posedness theorem \ref{thm:main_Lagrangian} for the Lagrangian form and the link between the Eulerian and Lagrangian viewpoints Theorem \ref{thm:equivalence_flow_pde}.
\end{proof}

\begin{rem}\label{rem:forcingallowed}
The proof of Theorem \ref{thm:equivalence_flow_pde} used certain algebraic identities of the spray $B$ and its continuity. 
In the presence of an additional deterministic forcing term $f$, one has to replace the spray $B$ by the second order vector field $B_f$ discussed in \ref{setup:Eulerforce} and Theorem \ref{thm:main_Lagrangian}. 
Due to the work of Ebin and Marsden \cite[Section 11]{EM70}, similar algebraic identifies as the ones used for $B$ in the proof of Theorem \ref{thm:equivalence_flow_pde} hold for $B_f$. In addition, we have seen that for continuous $f\colon \R \rightarrow \VFsmu[s+1]$, also the second order vector field $B_f$ is locally $C^{0,1}$. Thus, the results obtained in Theorem \ref{thm:equivalence_flow_pde} and Corollary \ref{cor:locwell} hold (by the same proof) also for solutions of the stochastic Euler equation in the presence of an additional deterministic forcing term.
\end{rem}

\section{Extensions of the local well-posedness results}\label{sect:above_beyond}

In this section we discuss various natural extensions of the results of the present paper. In order to get a simple introductory presentation, we chose in many cases not to give the most general version possible with the methods developed in the present work. These topics are now discussed.

\subsection*{Sobolev spaces of different regularity}\addcontentsline{toc}{subsection}{Sobolev spaces of different regularity}

Throughout the main text our assumption was that the Sobolev spaces $H^s (K,M)$ and $\Diff^s (K)$ are of integer order $s$ and the derivatives are contained in $L^2$.
There are two natural ways to modify these requirements:
\begin{enumerate}
\item Pass to fractional order Sobolev spaces.
\item Replace derivatives in $L^2$ by derivatives in some $L^p$ for $1<p<\infty$.
\end{enumerate}
Reviewing the arguments used, none of them are tied to integer order Sobolev spaces\footnote{The exception being the description of the inner products giving the Hilbert space structure of the Sobolev mappings and the right-invariant strong Riemannian metric on $\Diff^s (K)$, cf.\ \ref{setup: volpres}. However, these spaces always admit a right-invariant strong Riemannian metric and the explicit form is irrelevant to our arguments.} whence they carry over verbatim to fractional order Sobolev spaces.
The main issue why we refrained from using fractional order Sobolev spaces is due to the fact that the underlying manifold $K$ is allowed to have a smooth boundary.
Hence our discussion has to distinguish two cases:

\begin{setup}Fractional order Sobolev spaces for $K$ without boundary:
 If $K$ has no boundary, it is no problem to define fractional order Sobolev spaces on manifolds via the approach outlined in \cite{IKT13} (note that loc.cit.\ only defines integer order Sobolev spaces on compact manifolds, but the approach carries over to fractional order, as noted in \cite[Section 5]{MR3635359}).
 Indeed one then retains all the necessary tools (e.g.\ Sobolev embedding theorems, differentiability of the composition, etc.) to carry out the constructions leading to Theorems \ref{thm:main_Lagrangian} and \ref{thm:equivalence_flow_pde}.\end{setup}
 Hence we can state the following:
  
 \begin{prop} 
 If $K$ is a compact smooth manifold without boundary. Then Theorems \ref{thm:main_Lagrangian} and \ref{thm:equivalence_flow_pde} remain valid for fractional order Sobolev index $s$ which satisfy the assumptions of these theorems. In particular, both main theorems from the introduction remain valid for fractional order Sobolev index $s$.
\end{prop}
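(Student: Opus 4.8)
The plan is to verify that every ingredient entering the proofs of Theorems~\ref{thm:main_Lagrangian} and~\ref{thm:equivalence_flow_pde} survives the passage to a fractional Sobolev index, the boundaryless hypothesis being precisely what guarantees a well-behaved fractional calculus on $K$. Concretely, I would inspect the two proofs step by step and check that each cited result is either a statement about abstract Hilbert manifolds (hence indifferent to the order of the model space), or a property of the spaces $H^s(K,N)$ and of $\Dmu$ that the approach of \cite{IKT13,MR3635359} establishes for real $s>d/2$ on a closed manifold. Nothing new needs to be proved; the work is bookkeeping of hypotheses.

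For the Lagrangian Theorem~\ref{thm:main_Lagrangian} I would track three building blocks. First, the abstract existence-and-uniqueness result Theorem~\ref{thm:local_abstract_SDE_result} is formulated for an arbitrary separable Hilbert manifold and makes no reference to the order of the Sobolev model, so it applies unchanged. Second, the regularity of the diffusion coefficient, Proposition~\ref{prop:VFisgood}, rests on the purely Banach-space Lemma~\ref{lem: expo:lin} together with the fact that right composition by an $H^{s+k}$ field is of class $C^k$; the former is order-agnostic and the latter is exactly the composition estimate available for fractional order on a closed $K$. Third, the smoothness of the geodesic spray, \ref{thm:em70}, is the original Ebin--Marsden result, which holds for real $s>d/2+1$; thus the drift is again $C^\infty$ (resp.\ $C^{0,1}_{\mathrm{loc}}$ with forcing) in charts and Theorem~\ref{thm:local_abstract_SDE_result} applies.

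For the equivalence Theorem~\ref{thm:equivalence_flow_pde} the same philosophy works. The It\^o-formula machinery of Section~\ref{sect: SDE:HMfd} (Theorem~\ref{thm:Ito_formula_manifold}, the invariance statements, Corollary~\ref{cor:change_chart}) is independent of the model space. The geometric content is Lemma~\ref{lem:aux:deriv}: the $C^2$-regularity of the maps $G$ and $F$ and their tangent formulae follow from differentiability of composition and inversion (\ref{setup: HL}) and from the identification $TH^s(K,K')=H^s(K,TK')$, both of which hold for fractional $s$ on a closed manifold. The remaining analytic inputs — the Sobolev embedding $H^s\hookrightarrow C^1$ for $s>d/2+1$, boundedness of the Leray projector $\Pi$ on $\VFsmu$, and the spray formula \eqref{eq:geodesic_spray_EM} — all have fractional counterparts in the boundaryless setting. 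Hence both directions of Theorem~\ref{thm:equivalence_flow_pde} go through for the stated fractional ranges of $s$, and Corollary~\ref{cor:locwell} transfers along with them.

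The hard part, and indeed the only genuine obstacle, will be verifying that the foundational half-Lie-group structure (differentiability and tangent formulae for composition and inversion) and the Ebin--Marsden spray smoothness truly extend to fractional order rather than merely being stated for integer $s$ in the appendix. For a closed manifold this is supplied by the composition and Sobolev estimates of \cite{IKT13,MR3635359}, which is exactly why the hypothesis ``$K$ without boundary'' cannot be relaxed here: the boundary case would require a fractional Sobolev calculus compatible with the tangency condition, which the present framework does not develop. Granting these inputs, every remaining step is verbatim, and the concluding ``in particular'' is immediate, since the two main theorems of the introduction are precisely the fractional instances of Theorem~\ref{thm:equivalence_flow_pde} and Corollary~\ref{cor:locwell}.
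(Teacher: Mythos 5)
Your proposal is correct and follows essentially the same route as the paper: the paper's proof likewise consists of observing that fractional-order Sobolev spaces on a closed manifold (constructed via \cite{IKT13}, extended to fractional order as in \cite[Section 5]{MR3635359}) retain all the necessary tools --- Sobolev embedding, differentiability of composition, the half-Lie group structure --- and that, reviewing the arguments leading to Theorems \ref{thm:main_Lagrangian} and \ref{thm:equivalence_flow_pde}, none of them is tied to integer-order Sobolev spaces, so they carry over verbatim. The only detail the paper flags that you omit is that the explicit inner product and strong right-invariant Riemannian metric of \ref{setup: volpres} (defined using $L=\id+\Delta^s$) do use integrality of $s$; this is immaterial since these manifolds always admit a strong right-invariant metric and its explicit form never enters the arguments.
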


 \begin{setup}Fractional order Sobolev spaces for $K$ with smooth boundary:
  If $K$ is allowed to have a smooth boundary, we still believe that the results stated in the boundary-less case carry over. However, to the best of our knowledge, the only source in the literature sketching the construction of a manifold of mappings for fractional Sobolev regularity on a manifold with smooth boundary is the sketch contained in \cite[Section 5]{Mic19}. We strongly believe that the cited results will allow a similar theory for equations on fractional order Sobolev spaces as in the boundary-less case. Working out the details is however beyond the scope of the present paper.
 \end{setup}
If we change the Sobolev spaces in requiring that the derivatives should be contained in $L^p$ (relaxing the requirement that $p=2$) one notices first that the spaces of Sobolev mappings are no longer (modelled on) Hilbert spaces. Instead we are in the weaker Banach space setting. It is well known that the classical Ebin-Marsden analysis can be carried out in the $L^p$ setting, \cite{BaB74}. However, also the stochastic setting outlined in Section~\ref{sect: SDE:HMfd} has to be modified since it relied on Hilbert space techniques, see \cite{BaE01} for how to do this. Though the authors think that this is possible in principle, we remark here that it would require a significant amount of work. This is not only due to the stochastic setting used. For example, we have used cut-off functions which exist on Hilbert, but not necessarily on Banach spaces \cite[Chapter III]{MR1471480}.

\subsection*{Maximal solutions and preservation of regularity}\addcontentsline{toc}{subsection}{Maximal solutions and preservation of regularity}

Our aim in the present paper was to establish a local existence and uniqueness theory for a stochastic version of the Euler equation. From the perspective of the infinite-dimensional manifold of Sobolev diffeomorphisms, all solutions to the Euler equation start at the identity element. Hence, to treat local existence and uniqueness we could restrict to a single fixed chart around the identity. As a result, the solutions we construct will (in general) not be maximal as we stop the solution once it leaves the domain of the manifold chart. 
While this is sufficient for local existence and uniqueness considerations, it might be desirable to establish the existence of maximal (in time) solutions to the stochastic differential equations. With some additional effort this is possible, see \cite[Chapter VII]{Elw82}, but again it requires some work (essentially because, even at fixed time, a solution $X(\omega)$ of an SDE can live in different chart domains for different $\omega$). In order to keep the presentation self-contained, we did not include maximality here. Moreover, some attention is needed to relate maximality at the Lagrangian and Eulerian level (to our knowledge, this point is not investigated even in the deterministic setting in \cite{EM70}). However we expect existence (and uniqueness) of a maximal solution to hold for both Lagrangian and Eulerian level.
%at least for the existence of $\Phi$ at the beginning of the proof of Theorem \ref{thm:equivalence_flow_pde}, we would need to extend the setting in \cite[Chapter VII]{Elw82} has to be slightly modified to our framework (to include random or time dependent or locally Lipschitz drifts), we did not include maximality here. However, we expect the existence (and uniqueness) of a maximal solution to the Lagrangian form \eqref{Euler:stochastic_Lagrangian} to hold, and hence also the existence of a maximal solution to the Eulerian form \eqref{Euler:stochastic}.

Once maximality is established, one can study the problem of preservation of regularity. Indeed, it is well known that the deterministic Euler equation preserves regularity of initial conditions. Namely, one has the famous
\begin{setup}[{``no loss no gain'' theorem \cite[Theorem 12.1]{EM70}}]
	\emph{Let $K$ be a compact manifold (possibly with boundary), $s> \frac{\mathrm{dim }K}{2} +1$ and $\eta$ the solution of a second order equation on $\Diff_{\mu}^s(M)$ given by a smooth right invariant second order vector field (in particular, by the geodesic spray). If $(\eta (0), \eta'(0)) \in \Diff_\mu^{s+k} (K) \times T_{\eta(0)} \Diff_{\mu}^{s+k} (K)$ then $\eta(t)$ is $H^{s+k}$ on the interior of $K$ for all $t$ in its domain of definition. }
\end{setup}
In particular, the interval of existence of the solution does neither increase nor decrease based on regularity of the initial data (that is, there cannot exist a solution which is $H^{s+k}$ up to some time, and only $H^s$ after). While we expect that a version of the theorem also holds in our setting, the argument in loc.cit.\ cannot be readily adapted to stochastic differential equations: the main reason is that this argument uses the existence of a flow solution to the ODE on the \textit{infinite}-dimensional manifold $T\Diff_{\mu}(K)$, something which is not clear to hold in the stochastic case.

\subsection*{Optimality in the Sobolev index}\addcontentsline{toc}{subsection}{Optimality in the Sobolev index}

In the deterministic setting, \cite[Theorem 14.4]{EM70} obtains equivalence of the Eulerian and Lagrangian forms at least for $s>d/2+2$, one can probably get even just $s>d/2+1$ with some attention in the proof. We expect that also in the stochastic setting this threshold can be reached and that Corollary \ref{cor:locwell} holds with the threshold $s>d/2+1$. This can be done using some technical tools, like embedding of split manifolds (possibly with boundary) and regularisation arguments. For example, in the passage from Eulerian to Lagrangian viewpoint, one may regularize the Eulerian solution and apply It\^o formula, then send regularization parameter to $0$, and so one should get an equation for position and velocity of each particle (that is, for $(\Phi(k),u(\Phi(k)))$ for each fixed $k$). Finally, using the embedding of $T\Diff^s_\mu$ as split manifold into the Hilbert space of $H^s$ mappings, one may show the equation for position and velocity $(\Phi,u(\Phi))$ as an element in $T\Diff^s_\mu(K)$. However the use of these technical tools requires a longer and careful analysis, which is outside the scope of this paper, hence we will address this result in a subsequent paper.

%This could be done by using a regularization argument in the proof of Theorem \ref{thm:equivalence_flow_pde}, namely one regularizes the solution to the Eulerian or Lagrangian form, apply It\^o formula and send the regularization parameter to zero. Another way could be to evaluate the Eulerian and Lagrangian form in a generic point $k$ of the finite-dimensional manifold $K$ and then to apply It\^o formula in the finite-dimensional setting. These methods should go through, but technical aspects appear (controlling the limit in the regularization parameter, passing from the SDE evaluated at $k$ to the infinite-dimensional evaluation), which we do not treat here.

\subsection*{Multiplicative noise}\addcontentsline{toc}{subsection}{Multiplicative noise}

In this paper, we considered additive noise as the simplest example of noise. We expect that the framework can be extended to other kinds of noise, for example a multiplicative noise of Nemytskii type, namely $g(x,u(t,x)) \bullet dW(t,x)$ for $g:TK \to \R$ sufficiently regular: in this case, the noise in the Lagrangian formulation should be of the form $\Sigma(\eta) \bullet dW = \text{vl}_{T\Dmu} \comp(g\bullet dW,\eta)$, which has similar regularity properties to the additive noise $\text{vl}_{T\Dmu} \comp(\bullet dW,\pi(\eta))$. We leave the precise analysis of Euler equations with this and other noises for future works.

\subsection*{Extending the mechanism to other Euler-Arnold equations}\addcontentsline{toc}{subsection}{Extending the mechanism to other Euler-Arnold equations}
The Euler equation is the prototypical example of a PDE which can be rewritten as an ODE on an infinite-dimensional configuration space. 
However, the same is true for a large class of PDEs: the so-called \emph{Euler-Arnold equations}. 
We refer to \cite[Example 4.18]{MR2456522} for a list of examples and references to the literature, including the Camassa-Holm and Hunter-Saxton equations.
For many of these examples one can adapt the strategy of Ebin and Marsden to obtain local well-posedness.
Our stochastic framework developed in this paper is ignorant of the deterministic (drift) part of the equation, as long as it is a (locally) Lipschitz continuous vector field on the underlying Banach manifold.
Therefore, it can be applied, with small modification, to other stochastically forced Euler--Arnold equations.

% A main point of the present paper is to present the basic tools to treat stochastic versions of the Euler equation. 
% The methods we develop are generic in that they (up to some modest amount of modification) allow to treat many if not all stochastic versions of Euler-Arnold PDEs for which the Ebin and Marsden framework works in the deterministic setting.
To demonstrate the flexibility of the approach we give here a new result for a stochastic version of the \emph{averaged Euler equations}~\cite{Sh1998,MR1849348,MaRaSh2000,MaSh2003}.
This is a fluid model where nonlinear interactions for length-scales smaller than $\alpha>0$ are neglected.
We consider a stochastically forced version of these equations
\begin{equation}\label{eq:averaged_euler_stochastic}
\left\{
\begin{aligned}
& \frac{\partial m}{\partial t} + \nabla_u m + \alpha^2 \nabla u^\top \Delta u + \nabla p = \dot W \\
& m = (\id-\alpha^2\Delta)u \\
& \operatorname{div} u = 0 , \quad u(0,\cdot) = u_0 \, .
\end{aligned}
\right.
\end{equation}
Here, $\Delta$ denotes the Laplace-de Rham operator and the new variable $m$ is interpreted as momentum.
For simplicity, let us assume that the underlying compact manifold $K$ is without boundary. In the following, we take $s>d/2+1$ and assume Assumption \ref{hp:noise} on the noise with $s'\ge s-3$. The precise definition of solution is the following one:

\begin{defn}
A local (strong and smooth) solution to the stochastic averaged Euler equation in Eulerian form \eqref{eq:averaged_euler_stochastic} is an $\mathfrak{X}^s_\mu(K)$-valued $\cF$-progressively measurable process $u=(u(t))_{[0,\tau)}$, with $\tau$ accessible stopping time and $\tau>0$ $P$-a.s., with $P$-a.s. continuous paths in $\mathfrak{X}^s_\mu(K)$, such that for all $ t\in[0,\tau)$
\begin{align*}
&u(t) - u(0) \notag  \\ 
=&\int_0^t \Pi(\id-\alpha^2 \Delta)^{-1}[\nabla_{u} (\id-\alpha^2 \Delta)u+\alpha^2 \nabla u^\top \Delta u] \, dr + (\id-\alpha^2 \Delta)^{-1} W(t).
\end{align*}
\end{defn}
As for the Euler equations, the equations \eqref{eq:averaged_euler_stochastic} can be cast instead as a second order SDE in the Lagrangian variable $\Phi\in \Diff^s_\mu(K)$ via $\dot\Phi = u\circ\Phi$. Informally, in Lagrangian coordinates the equations take the form
\begin{align*}
\ddot\Phi = B_\alpha(\Phi, \dot\Phi) + ((\id-\alpha^2\Delta)^{-1}\dot W)\circ\Phi.
\end{align*}
Rigorously, the Lagrangian formulation is the following SDE on $T\Diff^s_\mu(K)$:
\begin{align}\label{eq:averaged_euler_stochastic_lag}
d\eta = B_\alpha(\eta)dt +\Sigma_\alpha(\eta)\circ dW,
\end{align}
where $B_\alpha:T\Diff_\mu^s(K) \to T^2\Diff_\mu^s(K)$ is a smooth bundle map~\cite[Thm.~3.3]{Sh1998}\cite{MR1849348} and $\Sigma_\alpha$ is defined by
\begin{align*}
& \Sigma_\alpha:T\Diff^s_\mu(K) \to L(\mathfrak{X}^s_\mu(K),T^2\Diff^s_\mu(K)), \\ & \Sigma_\alpha(\eta)V = \text{vl}_{T\Diff^s_\mu(K)}(\eta,\text{comp}((\id-\alpha^2\Delta)^{-1}V)).
\end{align*}
Applying our stochastic Ebin and Marsden framework then yields the following results.

\begin{thm}\label{thm:averaged_lag}
	Fix $s>d/2+1$ and suppose Assumption \ref{hp:noise} with $s'\ge s$.
	%suppose that the noise takes values in the space of $H^{s}$ divergence-free (and tangential) vector fields.
	Then local strong well-posedness (in the sense of Theorem~\ref{thm:local_abstract_SDE_result}) holds for the Lagrangian formulation \eqref{eq:averaged_euler_stochastic_lag} on $T\Diff_\mu^s(K)$.
\end{thm}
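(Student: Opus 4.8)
The strategy is to reduce the averaged Euler case to exactly the same abstract machinery already assembled for the ordinary Euler equation, namely Theorem~\ref{thm:local_abstract_SDE_result} (local well-posedness of SDEs on Hilbert manifolds) together with the regularity Proposition~\ref{prop:VFisgood} for right-translated vector fields. Concretely, I would verify that the drift $B_\alpha$ and the diffusion coefficient $\Sigma_\alpha$ of \eqref{eq:averaged_euler_stochastic_lag} satisfy the hypotheses of Theorem~\ref{thm:local_abstract_SDE_result} in a single chart $\psi$ around the identity chosen as in \ref{setup:VLIFT}, with $\pi(\eta_0)=\id$. The proof should therefore mirror the proof of Theorem~\ref{thm:main_Lagrangian} essentially verbatim, the only differences being the source of smoothness of the drift and the extra factor $(\id-\alpha^2\Delta)^{-1}$ inside the diffusion.

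First I would handle the drift. By \cite[Thm.~3.3]{Sh1998} (see also \cite{MR1849348}) the averaged Euler spray $B_\alpha$ is a \emph{smooth} bundle map $T\Diff_\mu^s(K)\to T^2\Diff_\mu^s(K)$ for $s>d/2+1$; this is the analogue for the averaged equation of the Ebin--Marsden smoothness result \ref{thm:em70}. Consequently, after restricting $O$ to a bounded chart domain, the chart representative $B_\alpha^\psi$ is smooth, in particular of class $C^{0,1}$ on $\psi(O)$, which is precisely the regularity Theorem~\ref{thm:local_abstract_SDE_result} requires of the drift.

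Second I would treat the diffusion. Since $(\id-\alpha^2\Delta)^{-1}$ is a bounded linear operator on the relevant Sobolev space of divergence-free vector fields (it even gains regularity, which is why $s'\ge s$ suffices here rather than $s+2$), the map $V\mapsto (\id-\alpha^2\Delta)^{-1}V$ is continuous linear, and so $\Sigma_\alpha$ differs from the Euler diffusion $\Sigma$ only by precomposition with this fixed bounded operator. Writing $\Sigma_\alpha(\eta)V=E_{(\id-\alpha^2\Delta)^{-1}V}(\eta)$ and invoking the definition of $e^{\psi,\ell}$ from \eqref{eq:def:ekappal}, the chart representative is $\Sigma_\alpha^\psi=(e^{\psi,\ell})^\wedge\circ(\id-\alpha^2\Delta)^{-1}$; by Proposition~\ref{prop:VFisgood} the factor $(e^{\psi,\ell})^\wedge$ is $C^{\ell-1,1}_{\text{loc}}$, and composing a $C^{1,1}$ map with a bounded linear operator preserves this class, so $\Sigma_\alpha^\psi$ is $C^{1,1}$ on $\psi(O)$. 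With both coefficients in the required classes, Theorem~\ref{thm:local_abstract_SDE_result} yields existence and uniqueness up to the first exit time from $O$, giving the claimed local strong well-posedness.

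The main obstacle is not in the abstract application but in justifying the regularity input for the averaged drift: unlike the ordinary Euler spray, whose smoothness is \ref{thm:em70}, here one must rely on the cited smoothness of $B_\alpha$ and confirm that the mapping properties of $(\id-\alpha^2\Delta)^{-1}$ are compatible with the Sobolev indices in Assumption~\ref{hp:noise}. The subtle point is that $B_\alpha$ involves the operator $(\id-\alpha^2\Delta)^{-1}$ applied to nonlinear terms of the form $\nabla_u m+\alpha^2\nabla u^\top\Delta u$ with $m=(\id-\alpha^2\Delta)u$; one must check that the regularity-gaining property of $(\id-\alpha^2\Delta)^{-1}$ exactly compensates the two derivatives lost in these terms so that $B_\alpha$ genuinely maps into $T^2\Diff_\mu^s(K)$ and is smooth as a bundle map, which is the content of the cited references and is the only place where the structure of the averaged equation (as opposed to plain Euler) really enters.
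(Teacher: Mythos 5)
Your proposal is correct and follows essentially the same route as the paper's own proof: smoothness of the averaged spray $B_\alpha$ is taken from \cite{MR1849348} (so its chart representative is in particular $C^{0,1}$ on a bounded chart domain), the diffusion $\Sigma_\alpha$ localises to a $C^{1,1}$ section by Proposition~\ref{prop:VFisgood} precisely because $(\id-\alpha^2\Delta)^{-1}$ gains two derivatives (which is why $s'\ge s$ suffices instead of $s'\ge s+2$), and then Theorem~\ref{thm:local_abstract_SDE_result} applies. The paper's proof is a terser statement of exactly this argument, so there is nothing to add.
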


Note that the regularity assumption on the noise is lower than what we get for the stochastic Euler equations.
This is because the relation between the momentum $m$ and the velocity field $u$ has a regularizing effect (hence \emph{averaged} Euler equations).
We also obtain a corresponding result in Eulerian variables:

\begin{thm}\label{thm:averaged_eul}
	Fix $s>d/2+4$ and suppose Assumption \ref{hp:noise} with $s'\ge s$.
	%suppose that the noise takes values in the space of $H^{s}$ divergence-free (and tangential) vector fields. 
	Then local strong well-posedness (in the sense of Theorem~\ref{thm:local_abstract_SDE_result}) holds for the stochastic averaged Euler equation~\eqref{eq:averaged_euler_stochastic} on $\mathfrak{X}^s_\mu(K)$.
	%with $u\in \mathfrak{X}^s(K)$ and $m \in \mathfrak{X}^{s-2}(K)$.
\end{thm}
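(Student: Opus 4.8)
The plan is to follow the proof of Corollary~\ref{cor:locwell} in structure: I would deduce the statement from the Lagrangian well-posedness of Theorem~\ref{thm:averaged_lag} together with an averaged-Euler analogue of the Eulerian--Lagrangian equivalence of Theorem~\ref{thm:equivalence_flow_pde}. All the abstract machinery---the It\^o formula for manifold-valued processes (Theorem~\ref{thm:Ito_formula_manifold}) and the invariance results (Lemma~\ref{lem:invariance_SDE_diffeo} and Remark~\ref{rem:invariance_SDE_map})---applies unchanged, so the only genuinely new work is to re-establish the equivalence with the spray $B$ replaced by the averaged spray $B_\alpha$ and the diffusion $\Sigma$ replaced by $\Sigma_\alpha$.

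For existence I would argue in the Lagrangian-to-Eulerian direction, as in the first part of Theorem~\ref{thm:equivalence_flow_pde}. Invoking Theorem~\ref{thm:averaged_lag} with $s'\ge s$ and initial datum $\eta_0\in\VFsmu$ satisfying $\pi(\eta_0)=\id$ (so that $\eta_0=u_0$) produces a local strong solution $\eta$ to \eqref{eq:averaged_euler_stochastic_lag} on some $[0,\tau)$. Setting $u=F(\eta)=\eta\circ\pi(\eta)^{-1}$ with the $C^2$ map $F$ of Lemma~\ref{lem:aux:deriv} and applying Theorem~\ref{thm:Ito_formula_manifold} yields
\begin{align*}
du = DF\circ B_\alpha(\eta)\,dt + DF\circ\Sigma_\alpha(\eta)\bullet dW ,
\end{align*}
so it remains to identify the two coefficients with the Eulerian ones. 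The diffusion is immediate from the localised formula \eqref{Ftangent:localised}: since $\Sigma_\alpha(\eta)V$ is the vertical lift of $\comp((\id-\alpha^2\Delta)^{-1}V,\pi(\eta))$, its $Z^{1,\kappa}$-component vanishes and $DF\circ\Sigma_\alpha(\eta)V$ collapses to $(\id-\alpha^2\Delta)^{-1}V$, matching the additive Eulerian noise $(\id-\alpha^2\Delta)^{-1}W$. Here the regularisation is decisive: since $(\id-\alpha^2\Delta)^{-1}$ gains two derivatives, a noise in $\VFsmu[s']$ with merely $s'\ge s$ already yields a coefficient of the $C^{1,1}$ regularity that the non-averaged case attained only for $s'\ge s+2$, which explains the weaker noise hypothesis.

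The drift identification is the step I expect to be the main obstacle. In place of the Ebin--Marsden formula \eqref{eq:geodesic_spray_EM} I would use the explicit expression for the averaged spray $B_\alpha$ from \cite[Thm.~3.3]{Sh1998} and \cite{MR1849348}, localise $B_\alpha(V\circ\Phi)\circ\Phi^{-1}$ in a chart $\kappa$ around an arbitrary $k\in K$, and push the result through \eqref{Ftangent:localised}; the bookkeeping must confirm that the velocity component equals $V$ and that the principal part reduces to the integrand $\Pi(\id-\alpha^2\Delta)^{-1}[\nabla_V(\id-\alpha^2\Delta)V+\alpha^2\nabla V^\top\Delta V]$ appearing in the definition of Eulerian solution. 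The higher-order derivatives in $\nabla_V(\id-\alpha^2\Delta)V$ and the extra nonlinear term $\alpha^2\nabla V^\top\Delta V$ make this matching more delicate than in the plain Euler case, and, exactly as in Theorem~\ref{thm:equivalence_flow_pde}, this is where the threshold $s>d/2+4$ enters---both to keep $F$ (and, in the converse direction, $G$) of class $C^2$ and to guarantee continuity of all localised quantities; as there, the identity holds a priori in a space of two lower Sobolev orders and is then upgraded by continuity. Once the coefficients are matched, Lemma~\ref{lem:invariance_SDE_diffeo} and Remark~\ref{rem:invariance_SDE_map} transfer the equation to the Eulerian side. For uniqueness I would run the converse argument: given an Eulerian solution $u$, construct the flow $\Phi$ solving the random ODE $\dot\Phi=u(t,\Phi)$ via Theorem~\ref{thm:local_abstract_SDE_result} and Remark~\ref{rem:invariance_SDE_map}, set $\eta=G(u,\Phi)=u\circ\Phi$, and repeat the computation with the map $G$ of Lemma~\ref{lem:aux:deriv} in place of $F$ to see that $\eta$ solves \eqref{eq:averaged_euler_stochastic_lag}; uniqueness in Theorem~\ref{thm:averaged_lag} then forces uniqueness of $u$. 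Assembling both directions gives the equivalence, and combining it with Theorem~\ref{thm:averaged_lag} yields the claim exactly as in Corollary~\ref{cor:locwell}.
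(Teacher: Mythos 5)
Your proposal is correct and takes essentially the same approach as the paper: the paper's own proof of Theorem~\ref{thm:averaged_eul} consists of the single statement that it ``follows the same steps as in Section~\ref{sect: EM-SEuler}, with obvious changes'', which is precisely the combination you describe of Theorem~\ref{thm:averaged_lag} with an averaged analogue of the equivalence Theorem~\ref{thm:equivalence_flow_pde}, run in both directions and concluded as in Corollary~\ref{cor:locwell}. Your elaboration of the coefficient identifications (in particular that the vertical-lift structure of $\Sigma_\alpha$ makes $DF\circ\Sigma_\alpha$ collapse to $(\id-\alpha^2\Delta)^{-1}$, and that this regularisation is what permits the weaker noise hypothesis $s'\ge s$) is a faithful, more detailed rendering of what the paper leaves implicit.
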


\begin{proof}[Proof of Theorem~\ref{thm:averaged_lag}]
	%In Lagrangian coordinates the equations take the form
%	\[
%		\nabla_{\dot\Phi}{\dot\Phi} = F(\Phi, \dot\Phi) + ((\id-\alpha^2\Delta)^{-1}\dot W)\circ\Phi.
%	\]
	From \cite{MR1849348} we know that the drift $B_\alpha$ is a smooth spray on $\Diff_\mu^s(K)$.
	Due to the regularization operator $(\id-\alpha^2\Delta)^{-1}$, the diffusion coefficient $\Sigma_\alpha$, localised in a chart around the origin, is a $C^{1,1}$ section, by Proposition \ref{prop:VFisgood}.
	%$(1-\alpha^2\Delta)^{-1}\dot W$ is $H^{s+2}$ noise as we need in our theory.
	Hence the result follows from Theorem~\ref{thm:local_abstract_SDE_result}.
	%Furthermore, since our theory in Section~\ref{sect: EM-SEuler} only uses that the drift term is smooth (or even just Lipschitz), we readily obtain the stated result.
\end{proof}

\begin{proof}[Proof of Theorem~\ref{thm:averaged_eul}]
	The proof follows the same steps as in Section~\ref{sect: EM-SEuler}, with obvious changes.
\end{proof}

\appendix

\section{Stochastic integration on Hilbert spaces}\label{App:stochastic}

In this appendix we recall basic concepts and construction from stochastic integration on infinite-dimensional spaces and stochastic differential equations. Also here we take the main results and the approach from \cite{Elw82,BaE01}, taking also some facts from \cite{daPaZ} and \cite{BrNeVeWe2008}.

We fix a probability space $(\Omega,\mathcal{A},P)$ and a filtration $(\mathcal{F}_t)_t$ as in \ref{setup:filtration_standard}. In the following, $H$ and $E$ are separable Hilbert spaces.

\begin{setup}\label{setup:stochproccess} A stochastic process $X \colon I \times \Omega \rightarrow G$ on a time interval $I=[a,b]$ or $I=[a,+\infty)$ is a collection indexed by time of random variables $X_t\colon \Omega\rightarrow G$, $t\in I$, where $(G,\cG)$ is a measurable space. Strictly speaking, for every $t$, $X_t$ is an equivalence class, but we can also consider a modification, or version, $\tilde{X}$ of $X$: this is a map defined on $[a,t]\times \Omega$ such that, for every $t$, $\tilde{X}_t$ is a representative of $X$; we often use the same symbol for $X$ and $\tilde{X}$. A process $X$ is called 
\begin{itemize}
	\item adapted if, for each $t$, $X_t$ is $\cF_t$-measurable, and
	\item progressively measurable if, for each $t$, $X$ restricted to $[a,t]\times\Omega$ is $\cB([a,t])\otimes \cF_t$-measurable.
\end{itemize}
\end{setup}

\subsection*{Brownian motion and It\^o integral}

As a first example we recall the definition and the construction of a Brownian motion on a separable Hilbert space.

\begin{setup}[$Q$-Wiener process]\label{setup:QWiener}
Let $Q$ be a symmetric, positive semidefinite, trace-class operator on $E$. An $E$-valued $Q$-Wiener process, or $Q$-Brownian motion, with respect to $\cF$ is an $E$-valued progressively measurable process $W$ satisfying
\begin{itemize}
\item $W_0=0$ $P$-a.s.;
\item for every $s<t$, $W_t-W_s$ is a centred normal random variable with variance $(t-s)Q$ (that is, for every $v$ in $E$, $E[e^{i\lan v,W_t-W_s\ran}] = e^{-\lan v,Qv\ran/2}$);
\item for every $s<t$, $W_t-W_s$ is independent of $\cF_s$;
\item $W$ has a.s. continuous paths (that is, $P$-a.s. $t\mapsto W_t$ is continuous).
\end{itemize}
By \cite[Propositions 4.3, 4.4]{daPaZ}, for any symmetric, positive semidefinite, trace-class operator $Q$ on $E$, there exists a $Q$-Brownian motion $W$ which is given by
\begin{align*}
W_t = \sum_k \sqrt{\lambda_k} W^k_t e_k,
\end{align*}
where $(e_k)_k$ is an orthonormal basis of eigenvectors of $Q$, with non-negative eigenvalues $(\lambda_k)_k$.
\end{setup}

\begin{ex}[Trace-class Brownian motion]\label{ex:BM_trace_class}
Let $(W^k)_{k \in \N}$ be independent real Brownian motions and $\sigma_k$ be elements of $E$ such that
\begin{align*}
\sum_k \|\sigma_k\|_E^2 <\infty .
\end{align*}
Then the series $W_t = \sum_k \sigma_k W^k_t$ converges, in $L^2(\Omega)$ for every $t$ fixed and also $P$-a.s., uniformly in $t$, and it defines an $E$-valued Brownian motion, with covariance matrix $Q= \sum_k \sigma_k\sigma_k^*$, that is, $\langle Qa,b \rangle = \sum_k \langle \sigma_k,a \rangle \langle \sigma_k,b \rangle$. This result is proved in \cite[Propositions 4.3, 4.4 and Theorem 4.5]{daPaZ}, in the case of $\sigma_k$ orthogonal basis of eigenvectors of $Q$, but the proof can be extended in the same line to the case of possibly not orthogonal $\sigma_k$, as here.
\end{ex}

\begin{ex}[Cylindrical Brownian motion]
To define a cylindrical Brownian motion $\overline{W}$ on a separable Hilbert space $\overline{H}$, we set formally $\overline{W}=\sum_k W^k f_k$, where $(f_k)_k$ is a orthonormal basis of $\bar{H}$ and $W^k$ are independent real Brownian motions, defined on some filtered probability space $(\Omega,\cA,(\cF_t)_t,P)$ with complete and right-continuous filtration. Note that the above sum does not converge in $\bar{H}$.

Let now $E$ be a separable Hilbert space, such that there exists a dense embedding $i \colon \bar{H}\rightarrow E$ which is Hilbert-Schmidt, that is
\begin{align*}
\|i\|_{HS}^2:= \tr[i^*i]= \sum_k\|i f_k\|_E^2 <\infty.
\end{align*}
Then $W:=i(\overline{W})$ is an $E$-valued Brownian motion, in the sense that $\sum_k f_k W_k$ converges in $E$ as in Example \ref{ex:BM_trace_class}. See \cite[Section 4.1.2]{daPaZ} for more facts on cylindrical Brownian motion.
\end{ex}

%\subsection*{It\^o stochastic integral}\addcontentsline{toc}{subsection}{It\^o stochastic integral}
For stochastic differential equations (SDEs) and stochastic partial differential equations  (SPDEs), one has to deal with objects of the form
\begin{align*}
\int_a^b \xi_r dW_r,
\end{align*}
where $W$ is a Brownian motion. The problem is that $W$ is not differentiable in time, hence the above integral cannot be defined as Riemann-Stieltjes integral; other extensions, for example using Young integration theory, also do not work here. However, using probability theory, we can make sense of this integral by approximation via Riemann sums and an isometry property. Here we recall the construction of this integral, called the \emph{It\^o integral}. We follow mostly \cite[Section 4.2]{daPaZ}.

In the following, $W$ is a $Q$-Wiener process on $E$ (with respect to $\cF$) as in \ref{setup:QWiener}. For simplicity of exposition, we assume for the moment that $Q$ is actually positive definite, explaining later how to extend the results to $Q$ only positive semidefinite.

\begin{defn}\label{defn:newnorm}
For $Q$ positive definite, we define a norm on $L(E,H)$:
\begin{align*}
\|A\|_{L_{2,Q}}^2 := \tr[(AQ^{1/2})(AQ^{1/2})^*].
\end{align*}
It is easy to show, using the property $\tr[(AQ^{1/2})(AQ^{1/2})^*] = \tr[Q^{1/2}A^*AQ^{1/2}]$, that
\begin{align}
\|A\|_{L_{2,Q}}\le \tr[Q]^{1/2}\|A\|_L\label{eq:L_LQ}.
\end{align}
\end{defn}
Note also that $L(E,H)$ is not complete with respect to the $L_{2,Q}$ norm.
% We call $\cN^Q([a,b])$ the space of progressively measurable processes $\xi:[a,b]\times \Omega\to L(E;H)$ such that
% \begin{align*}
% \int_a^b \|\xi_t\|_{L_{2,Q}}^2 dt <\infty \quad P-\text{a.s.}.
% \end{align*}
\begin{setup}[Operator valued processes]
We denote by $\cM^Q([a,b])$ the space of progressively measurable processes $\xi\colon [a,b]\times \Omega\to L(E,H)$ such that
\begin{align*}
\|\xi\|_{\cM^Q}^2:= E \int_a^b \|\xi_t\|_{L_{2,Q}}^2 dt <\infty.
\end{align*}
It is a normed space endowed with the norm from Definition \ref{defn:newnorm}, but it is not complete. The completion will include unbounded operators, but in view of Stratonovich integration to be defined below, we do not need this. 

Here and in what follows, a $L(E,H)$-valued map is (Borel) measurable if it is measurable to $\cB(L(E,H))$. %, where $\cB(L(E,H))$ is the Borel $\sigma$-algebra generated by the $L(E,H)$ norm; 
This applies also to the definition of progressively measurable processes \ref{setup:stochproccess} ($\cG=\cB(L(E,H))$ here). Any such $L(E,H)$-valued Borel measurable map is also measurable with respect to the Borel $\sigma$-algebra generated by the $L_{2,Q}$ norm (because the $L_{E,H}$ topology is stronger than the $L_{2,Q}$ topology); see Remark \ref{rmk:measurability_operator} later for more information on measurability for $L(E,H)$-valued maps.

We say that a process $\xi\colon[a,b]\times \Omega\to L(E,H)$ is elementary if it can be written as
\begin{align*}
\xi(t) = \sum_{j=0}^{n-1}\xi_j \one_{[t_j,t_{j+1}[}(t)
\end{align*}
where $a=t_0\le t_1< \ldots < t_n=b$ and, for every $j$, $\xi_j$ is $\cF_{t_j}$-measurable. 
Elementary processes in $\cM^Q([a,b])$ are dense in $\cM^Q([a,b])$ (see \cite[Proposition 4.22]{daPaZ} and Remark \ref{rmk:progrmeas_pred} below).
\end{setup}

For an elementary process $\xi$ in $\cM^Q([a,b])$, define the It\^o stochastic integral $I_{a,b}(\xi)$ as
\begin{align*}
I_{a,b}(\xi) := \int_a^b \xi_r dW_r :=\sum_{j=0}^{n-1}\xi_j(W_{t_{j+1}}-W_{t_j}).
\end{align*}
There hold (\cite[Proposition 4.20]{daPaZ})
\begin{align}
&E I_{a,b}(\xi) = 0,\label{eq:Ito_zero_mean}\\
&E [\|I_{a,b}(\xi)\|^2] = \int_a^b \|\xi_t\|_{L_{2,Q}}^2 dt \le C_Q \int_a^b \|\xi_t\|_{L(E,H)}^2 dt.\label{eq:Ito_isometry}
\end{align}
Hence we can extend the It\^o integral $I_{a,b}$ to a linear isometry on $\cM^Q([a,b])$, such that \eqref{eq:Ito_zero_mean} and \eqref{eq:Ito_isometry} hold for $I_{a,b}(\xi)$ for all $\xi$ in $\cM^Q([a,b])$. We use the notation (sometimes omitting $a$)
\begin{align*}
I_{a,b}(\xi) = \int_a^b \xi_r dW_r.
\end{align*}
The It\^o integral is additive on time intervals, that is $I_{[a,b]}(\xi)+I_{[b,c]}(\xi) = I_{[a,c]}(\xi)$.

For a process $\xi$ in $\cM^Q([a,b])$, we can consider the stochastic process on $[a,b]$ defined by the It\^o integral $I_{a,t}(\xi)$ for $t$ in $[a,b]$. There exists a version, still denoted by $I_t(\xi)$, of this process which is progressively measurable and is a continuous martingale, that is, for $P$-a.e. $\omega$, $t\mapsto I_t(\xi)$ is continuous as $E$-valued map and, for every $s<t$,
\begin{align*}
E[I_t(\xi)\mid \cF_s] = I_s(\xi)
\end{align*}
(in \cite{daPaZ} this is a consequence of the completeness of the space $\mathcal{M}_T^2(H)$ of continuous square integrable martingales). Moreover, the Burkholder-Davis-Gundi (BDG) inequality holds (\cite[Theorem 4.36]{daPaZ}): for every $1\le p<\infty$,
\begin{align}
E[\sup_{t\in[a,b]} \|I_t(\xi)\|^p] \le C_p E\left[\left(\int_a^b \|\xi_t\|_{L_{2,Q}}^2 dt\right)^{p/2}\right].\label{eq:BDG}
\end{align}

We also mention a property of later use: for every $\cF_a$-measurable random variable
\begin{align}
Z I_t(\xi) = I_t(Z\xi),\label{eq:F0_int}
\end{align}
as it can be verified easily for elementary processes.

Finally, all the above-mentioned facts hold for $Q$ which is only positive semidefinite, with the following change: one has to take the quotient space of $L(E,H)$ with respect to the $L_{2,Q}$ seminorm (which is not a norm in this general case).

\begin{rem}\label{rmk:progrmeas_pred}
The book \cite{daPaZ} uses a slightly different class of integrands, namely predictable processes (see \cite[Section 3.3]{daPaZ}) rather than progressively measurable ones. However one can show that, for every progressively measurable process $\xi$, there exists a predictable process $\tilde{\xi}$ with $\xi_t(\omega)=\tilde{\xi}_t(\omega)$ for $dt\otimes P$-a.e. $(t,\omega)$ (see \cite[Section 2.3, page 60]{Ku1990} for a proof in the finite-dimensional case, which works also in our case). With this identification, the results in \cite{daPaZ} hold in our context.

The paper \cite{BaE01} uses a more general setting for Banach space-valued processes and with abstract Wiener space. However, as known classically, one can reduce that setting to our case by setting $(i,\bar{H},E)$ as abstract Wiener space with $\bar{H}=Q^{1/2}E$, with the norm induced by $Q^{1/2}$, and $i\colon \bar{H}\rightarrow E$ the embedding. Note that, if $(e_k)_k$ is a complete orthonormal basis on $E$ of eigenvectors of $Q$ with eigenvalues $(\lambda_k)_k$, $Q^{1/2}e_k = \lambda_k^{1/2}e_k$ is a complete orthonormal basis on $\bar{H}$ and $W$ is a cylindrical Wiener process on $\bar{H}$.
\end{rem}

\begin{rem}\label{rmk:measurability_operator}
Concerning measurability for $L(E,H)$-valued maps, one can actually show that the Borel $\sigma$-algebra $\cB(L(E,H))$ on $L(E,H)$ generated by the $L_{E,H}$ norm and the Borel $\sigma$-algebra on $L(E,H)$ generated by the $L_{2,Q}$ norm coincide (more precisely, when $Q$ is not strictly positive, the Borel $\sigma$-algebras are taken on the quotient space and are generated by the quotient norms): the main point of the proof is to show that the $L_{E,H}$ norm is lower semi-continuous in the $L_{2,Q}$ topology. However we do not need, and we do not use, this fact here.
\end{rem}

\subsection*{It\^o processes and It\^o formula}\addcontentsline{toc}{subsection}{It\^o processes and It\^o formula}

An It\^o process is the sum of a deterministic integral and a stochastic integral. For an It\^o process, a stochastic chain rule, called the It\^o formula, holds. It differs from the classical chain rule (for smooth objects) by a second order term.

\begin{defn}
Let $H$ be a separable Hilbert space.
An $H$-valued process $\xi$ on $[a,b]$ is called an It\^o process if there exist progressively measurable processes $B\colon [a,b]\times\Omega \to H$, $S\colon [a,b]\times\Omega \to L(E,H)$, with 
\begin{align*}
\int_a^b E\|B_t\| dt <\infty, \qquad \text{ and } \qquad
\int_a^b E[\|S_t\|_{L_{2,Q}}^2] dt <\infty,
\end{align*}
such that, $P$-a.s.,
\begin{align*}
\xi_t = \xi_a +\int_a^t B_r dr +\int_a^t S_r dW_r,\quad \forall t\in[a,b].
\end{align*}
The processes $B$ and $S$ are called drift part and diffusion part of $\xi$. We write in short
\begin{align*}
d\xi_t = B_tdt +S_tdW_t.
\end{align*}
\end{defn}
For It\^o process a chain rule is available, known as It\^o formula, which contains a second order correction. The following version of It\^o formula is taken from \cite[Theorem 2.4]{BrNeVeWe2008}, see also \cite[Theorem 2.16]{BaE01}:

\begin{thm}[It\^o formula]\label{thm:Ito_formula}
Assume that $\xi$ is an $H$-valued It\^o process with drift part and diffusion part $B$ and $S$ respectively. Let $\tilde{H}$ be another separable Hilbert space and let $f\colon [0,T]\times H\to \tilde{H}$ be a function such that $f(t,\cdot)$ is twice (Fr\'echet) differentiable for each $t$ in $[a,b]$, $f(\cdot,x)$ is differentiable for each $x$ in $H$, and $f$, $\partial_t f$, $D_x f$, $D_x^2 f$ are continuous and bounded. Then it holds, $P$-a.s.\ and for all $ t\in [a,b]$,
\begin{align*}
f(t,\xi_t) &= f(a,\xi_a) +\int_a^t \partial_t f(r,\xi_r) + D_x f(r,\xi_r) B_r dr\\
&\quad+\int_a^t D_x f(r,\xi_r) S_r dW_r +\frac12 \int_a^t \tr[D^2_x f(r,\xi_r)(S_rQ^{1/2})(S_rQ^{1/2})^*] dr.
\end{align*}
\end{thm}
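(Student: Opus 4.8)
The plan is to prove the formula by the classical method of Taylor expansion along partitions of $[a,t]$ whose mesh tends to zero, carefully adapting the finite-dimensional It\^o argument to the Hilbert-space setting so that the second-order term produces the trace correction. First note that all four terms in the statement are well-defined under the hypotheses: the bound $\|D_xf(r,\xi_r)S_r\|_{L_{2,Q}}\le \|D_xf\|_\infty\|S_r\|_{L_{2,Q}}$ shows the stochastic-integral integrand lies in $\cM^Q$, and $|\tr[D_x^2f(r,\xi_r)(S_rQ^{1/2})(S_rQ^{1/2})^*]|\le \|D_x^2f\|_\infty\|S_r\|_{L_{2,Q}}^2$ is integrable in $(r,\omega)$. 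I would then reduce to the case of bounded coefficients by stopping at $\tau_R=\inf\{t\ge a:\|\xi_t\|+\int_a^t(\|B_r\|+\|S_r\|_{L_{2,Q}}^2)\,dr\ge R\}$, proving the identity on $[a,t\wedge\tau_R]$ and letting $R\to\infty$; since $f$ and its derivatives are already bounded, after this reduction $B$, $S$ and the range of $\xi$ are uniformly controlled.

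Fix a partition $a=t_0<\dots<t_n=t$ (write $\Delta t_k=t_{k+1}-t_k$) and telescope $f(t,\xi_t)-f(a,\xi_a)=\sum_k\bigl(f(t_{k+1},\xi_{t_{k+1}})-f(t_k,\xi_{t_k})\bigr)$. Splitting each increment as $\bigl(f(t_{k+1},\xi_{t_{k+1}})-f(t_k,\xi_{t_{k+1}})\bigr)+\bigl(f(t_k,\xi_{t_{k+1}})-f(t_k,\xi_{t_k})\bigr)$, the first (time) part equals $\int_{t_k}^{t_{k+1}}\partial_tf(r,\xi_{t_{k+1}})\,dr$ and converges, by continuity and dominated convergence, to $\int_a^t\partial_tf(r,\xi_r)\,dr$. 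For the second (space) part I would Taylor-expand to second order in $\Delta_k\xi:=\xi_{t_{k+1}}-\xi_{t_k}$. In the first-order term $\sum_kD_xf(t_k,\xi_{t_k})\Delta_k\xi$ I substitute $\Delta_k\xi=\int_{t_k}^{t_{k+1}}B_r\,dr+\int_{t_k}^{t_{k+1}}S_r\,dW_r$: the drift contribution tends to $\int_a^tD_xf(r,\xi_r)B_r\,dr$, while the diffusion contribution equals the It\^o integral $\int_a^t\Psi^n_r\,dW_r$ of the elementary-type integrand $\Psi^n_r=D_xf(t_k,\xi_{t_k})S_r$ for $r\in[t_k,t_{k+1})$; since $\Psi^n\to D_xf(\cdot,\xi)S$ in $\cM^Q$, the It\^o isometry \eqref{eq:Ito_isometry} gives convergence to $\int_a^tD_xf(r,\xi_r)S_r\,dW_r$.

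The main obstacle is the second-order term $\tfrac12\sum_kD_x^2f(t_k,\xi_{t_k})(\Delta_k\xi,\Delta_k\xi)$, which must generate the trace correction. Expanding $\Delta_k\xi$ again, the drift--drift and drift--diffusion contributions are controlled, after localization, by $\|B\|_\infty^2\sum_k(\Delta t_k)^2$ and by a Cauchy--Schwarz estimate of order $\sum_k(\Delta t_k)^{3/2}$ respectively, both vanishing with the mesh. The essential piece is $\tfrac12\sum_kD_x^2f(t_k,\xi_{t_k})(\delta_k,\delta_k)$ with $\delta_k:=\int_{t_k}^{t_{k+1}}S_r\,dW_r$. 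Using the covariance structure of the $Q$-Wiener process one computes, for the $\cF_{t_k}$-measurable bilinear form $D_x^2f(t_k,\xi_{t_k})$,
\begin{align*}
E\bigl[D_x^2f(t_k,\xi_{t_k})(\delta_k,\delta_k)\mid\cF_{t_k}\bigr]
=E\Bigl[\textstyle\int_{t_k}^{t_{k+1}}\tr\bigl[D_x^2f(t_k,\xi_{t_k})(S_rQ^{1/2})(S_rQ^{1/2})^*\bigr]\,dr\,\Big|\,\cF_{t_k}\Bigr],
\end{align*}
so the conditional means sum to a Riemann sum converging, by continuity of $r\mapsto D_x^2f(r,\xi_r)$, to the desired $\tfrac12\int_a^t\tr[D_x^2f(r,\xi_r)(S_rQ^{1/2})(S_rQ^{1/2})^*]\,dr$. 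The fluctuations $\zeta_k$, defined as $D_x^2f(t_k,\xi_{t_k})(\delta_k,\delta_k)$ minus its $\cF_{t_k}$-conditional mean, are martingale differences, whence $E[\|\sum_k\zeta_k\|^2]=\sum_kE[\|\zeta_k\|^2]\le\|D_x^2f\|_\infty^2\sum_kE[\|\delta_k\|^4]$; the Gaussian fourth-moment bound $E[\|\delta_k\|^4]=O((\Delta t_k)^2)$ (via the BDG inequality \eqref{eq:BDG} with $p=4$ and boundedness of $S$) forces this to vanish with the mesh. The bookkeeping of the trace-class $Q$ and the infinite eigenbasis sum is exactly where the $L_{2,Q}$-norm and the assumption $\tr[Q]<\infty$ enter; this is the genuinely infinite-dimensional feature absent from the finite-dimensional argument.

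Finally I would control the Taylor remainders $R_k$, bounded by $\tfrac12\,\omega(|\Delta_k\xi|)\,|\Delta_k\xi|^2$ with $\omega$ a modulus of continuity of $D_x^2f$. Here a second infinite-dimensional subtlety appears: boundedness of $D_x^2f$ does \emph{not} imply uniform continuity on bounded sets. I would circumvent this pathwise: for fixed $\omega$ the range $\{\xi_r:r\in[a,t]\}$ is compact (continuous image of a compact interval), so $D_x^2f$ is uniformly continuous on a neighbourhood of it; since $\sup_k|\Delta_k\xi|\to0$ by path-continuity and $\sum_k|\Delta_k\xi|^2$ stays bounded (converging to the quadratic variation $\int_a^t\|S_r\|_{L_{2,Q}}^2\,dr$), one obtains $\sum_k|R_k|\to0$ almost surely along a subsequence of partitions. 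Collecting the limits of all terms then yields the stated identity $P$-a.s., and a routine argument removes the subsequence and the localization.
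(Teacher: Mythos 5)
The paper never proves Theorem~\ref{thm:Ito_formula}: it is imported verbatim from the literature (\cite[Theorem 2.4]{BrNeVeWe2008}, see also \cite[Theorem 2.16]{BaE01}), so any proof you give is automatically a different route. Yours is the classical partition-and-Taylor argument transplanted to Hilbert spaces, and it identifies exactly the points where infinite dimension matters: the $L_{2,Q}$-bookkeeping with $\tr Q<\infty$ that makes the trace term finite, the Hilbert-space BDG inequality \eqref{eq:BDG} for the fourth-moment bound on $\delta_k$, and the fact that uniform continuity of $D_x^2f$ must be recovered pathwise from compactness of $\{\xi_r : r\in[a,t]\}$ rather than from boundedness of $D_x^2f$ (boundedness of the second derivative does not give uniform continuity on bounded sets in infinite dimensions). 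The trade-off is the expected one: your argument keeps the appendix self-contained at the cost of several pages of estimates, whereas the paper's citation leans on a result established in greater generality and keeps the appendix a toolbox rather than a treatise.

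Two steps need repair, both fixable with tools you already invoke. First, stopping at $\tau_R=\inf\{t\ge a:\|\xi_t\|+\int_a^t(\|B_r\|+\|S_r\|_{L_{2,Q}}^2)\,dr\ge R\}$ bounds the \emph{integrals} of $B$ and $\|S\|_{L_{2,Q}}^2$, not the integrands, so the bounds $\|B\|_\infty^2\sum_k(\Delta t_k)^2$ and $E[\|\delta_k\|^4]=O((\Delta t_k)^2)$ are not available. Replace them by
\begin{equation*}
\sum_k\Bigl\|\int_{t_k}^{t_{k+1}}B_r\,dr\Bigr\|^2\le\Bigl(\max_k\int_{t_k}^{t_{k+1}}\|B_r\|\,dr\Bigr)\int_a^t\|B_r\|\,dr,\qquad
\sum_k\Bigl(\int_{t_k}^{t_{k+1}}\|S_r\|_{L_{2,Q}}^2\,dr\Bigr)^2\le\Bigl(\max_k\int_{t_k}^{t_{k+1}}\|S_r\|_{L_{2,Q}}^2\,dr\Bigr)R,
\end{equation*}
which vanish as the mesh goes to zero by absolute continuity of the Lebesgue integral together with dominated convergence; the same device handles the drift--diffusion cross term. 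Second, the sum of the conditional means $E[\,\cdot\mid\cF_{t_k}]$ is \emph{not} literally a Riemann sum. Define instead $\zeta_k' := D_x^2f(t_k,\xi_{t_k})(\delta_k,\delta_k)-\int_{t_k}^{t_{k+1}}\tr[D_x^2f(t_k,\xi_{t_k})(S_rQ^{1/2})(S_rQ^{1/2})^*]\,dr$; these still have vanishing $\cF_{t_k}$-conditional mean, so your orthogonality-plus-BDG argument kills $\sum_k\zeta_k'$ in $L^2$, and now the compensator itself is the Riemann-type sum that converges (by continuity of $D_x^2f$, path continuity of $\xi$, and dominated convergence) to the trace integral. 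With these adjustments the proof closes as you describe.
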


% \begin{thm}[It\^o formula]\label{thm:Ito_formula}
% Assume that $\xi$ is an $H$-valued It\^o process with drift part and diffusion part $B$ and $S$ resp.. Let $\varphi:[0,T]\times H\to \R$ be a function such that $\varphi(t,\cdot)$ is twice differentiable for each $t$ in $[a,b]$, $\varphi(\cdot,x)$ is differentiable for each $x$ in $H$, and $\varphi$, $\partial_t \varphi$, $D_x \varphi$, $D_x^2 \varphi$ are globally bounded and uniformly continuous on bounded sets of $[0,T]\times H$. Then it holds, $P$-a.s.,
% \begin{align*}
% \varphi(t,\xi_t) &= \varphi(a,\xi_a) +\int_a^t \partial_t\varphi(r,\xi_r) + D_x \varphi(r,\xi_r) B_r dr\\
% &\quad+\int_a^t D\varphi(r,\xi_r) S_r dW_r +\frac12 \int_a^t \tr[D^2_x\varphi(r,\xi_r)(S_rQ^{1/2})(S_rQ^{1/2})^*] dr, \quad \forall t\in [a,b].
% \end{align*}
% \end{thm}

In short, the It\^o formula reads
\begin{align*}
df(t,\xi_t) = [\partial_t f(t,\xi_t) + D_x f(t,\xi_t) B_t] dt &+ D_x f(t,\xi_t) S_t dW_t +\\ &+\frac12 \tr[D^2_x f(t,\xi_t)(S_tQ^{1/2})(S_tQ^{1/2})^*] dt.
\end{align*}
Here, for a complete orthonormal basis $(e_k)_k$ of $E$,
\begin{align*}
\tr[D^2_x f(r,\xi_r)(S_rQ^{1/2})(S_rQ^{1/2})^*] = \sum_k D^2_x f(r,\xi_r)[S_rQ^{1/2}e_k][S_rQ^{1/2}e_k].
\end{align*}

\subsection*{Localization up to random times}\addcontentsline{toc}{subsection}{Localization up to random times}

As we will see the solution of an SDE may live only up to a finite time, which depends on the specific realization of the Brownian motion, hence this time is random. Also exit times from a given set for a process are random. Here we introduce a suitable notion of random times and its basic properties (see for example \cite[Section 3.5]{Ba2017}).

\begin{defn}
A stopping time $\tau$ is a map $\tau\colon \Omega\to I\cup\{+\infty\}$ such that, for every $t$, the set $\{\tau\le t\}$ belongs to $\cF_t$ (that is, we can decide if $\tau$ is $\le t$ or not looking only up to time $t$). We set
\begin{align*}
\cF_\tau = \{ A \in \cF_\infty \mid A\cap \{\tau\le t\}\in \cF_t\, \forall t\}
\end{align*}
the $\sigma$-algebra associated to $\tau$.

A stopping time is called accessible if there exists a sequence of stopping times $\tau_n$, non-decreasing, with $\tau_n<\tau$ $P$-a.s. for every $n$, such that $\tau_n\nearrow \tau$ $P$-a.s..
\end{defn}

Deterministic times are stopping times, the minimum and the maximum of two stopping times are also stopping times, the pointwise non-decreasing limit of stopping times is also a stopping time. A special example of stopping time is the exit time $\tau_U$ of a progressively measurable process $\xi$ from an open set $U$ in a metric space $G$ (endowed with its Borel $\sigma$-algebra), namely
\begin{align*}
\tau_U := \inf\{t\ge 0\mid \xi_t\notin U\}.
\end{align*}
If $\xi$ has continuous paths and $\xi_0$ is in $U$ $P$-a.s., then $\tau_U$ is accessible, taking $\tau_n=\tau_{U_n}$ the first exit times from $U_n$, where $U_n=\{x\in G\mid \text{dist}(x,U^c)> 1/n\}$.

Assume that the time interval is $I=[0,T]$. For a process $\xi$ in $\cM^Q([0,T])$ and a stopping time $\tau$, we have (see \cite[Lemma 4.24]{daPaZ})
\begin{align}
I_{0,\tau}(\xi) = I_{0,T}(\xi \one_{t\le \tau}).\label{eq:Itoint_stopping}
\end{align}

\begin{setup}\label{setup:Proc:stop} When the time interval is $I=[0,T]$ or $I=[0,+\infty)$, we call
\begin{align*}
[0,\tau)\times\Omega := \{(t,\omega)\in I\times\Omega \mid t< \tau(\omega)\}.
\end{align*}
A process $\xi$ on $[0,\tau)$ is a map $[0,\tau)\times \Omega\rightarrow G$, where $(G,\mathcal{G})$ is a measurable space, such that, for every $t$ in $I$, the map $\bar{\xi}_t := \xi_t \one_{\{t<\tau\}}+y_0\one_{\{t\ge\tau\}}$ is a random variable on $(\Omega,\cA,P)$, where $y_0$ is some element of $G$ (this definition of process on $[0,\tau)$ and the next definitions are independent of the choice of $y_0$); see the notation paragraph in \cite{BaE01} for a similar definition. It is called adapted, resp. progressively measurable if the process $\bar{\xi}$ is adapted, resp. progressively measurable. Analogous definitions can be given for $[0,\tau]\times \Omega$ (assuming that $\tau$ is finite $P$-a.s.) and processes defined on this space.
\end{setup}

\begin{setup}\label{setup:Proc:extension}
In the situation of \ref{setup:Proc:stop} we define spaces of processes up to stopping time.
\begin{itemize}
\item If $\tau$ is a stopping time with $\tau<\infty$ $P$-a.s., we let $\cN^Q([0,\tau])$ be the space of progressively measurable processes $\xi \colon [0,\tau]\times \Omega\to L(E,H)$ such that
\begin{align*}
\int_0^\tau \|\xi_t\|_{L_{2,Q}}^2 dt <\infty \quad P\text{-a.s.}.
\end{align*}
For $\xi$ in $\cN^Q([0,\tau])$, we can take stopping times
\begin{align*}
\tau_n=\inf\{t\in I \mid \int_0^t\|\xi_s\|^2_{L_{2,Q}}ds\ge n\}\wedge n,
\end{align*}
then $\tau_n\nearrow\tau$ and $\tau_n=\tau$ definitely $P$-a.s.. Moreover the process $\xi^n_s = \xi_s\one_{\{s\le \tau_n\}}$, $s\in I$, is in $\cM^Q(I)$ and hence we can define $I_{0,t}(\xi^n)$, $t\in I$. By \eqref{eq:Itoint_stopping}, $P$-a.s., for every $t$ in $[0,\tau]$, $I_{0,t}(\xi_n)$ is definitely constant in $n$, hence we define the It\^o integral for $\xi$ as the process on $[0,\tau]$ given by
\begin{align*}
I_t(\xi) = \lim_n I_{0,t}(\xi^n),\quad t\in [0,\tau]
\end{align*}
(setting $I_t(\xi)=0$ in the $P$-null set where the limit does not exists for some $t$).
% If $\tau$ is a stopping time with $\tau<\infty$ $P$-a.s., we call $\cM^Q([0,\tau])$ the space of progressively measurable processes $\xi\colon [0,\tau]\times \Omega\to L(E,H)$ such that
% \begin{align*}
% \|\xi\|_{\cM^q}^2:= E \int_0^\tau \|\xi_t\|_{L_{2,Q}}^2 dt <\infty.
% \end{align*}
% For $\xi$ in $\cM^Q([0,\tau])$, we define the It\^o integral of $\xi$ as the stochastic process on $[0,\tau]$ defined by
% \begin{align*}
% I_t(\xi) = I_{0,T}(\xi \one_{[0,\tau\wedge t]}),\quad t \in [0,\tau].
% \end{align*}
\item If $\tau$ is an accessible (not necessarily finite) stopping time and $\tau_n<\tau$ is a non-decreasing sequence of stopping time with $\tau^n\nearrow\tau$, we call $\cN^Q_{loc}([0,\tau))$ the space of progressively measurable processes $\xi\colon [0,\tau)\times \Omega\to L(E,H)$ such that
\begin{align*}
\int_0^{\tau_n} \|\xi_t\|_{L_{2,Q}}^2 dt <\infty \quad \text{for every }n,\quad P-\text{a.s.}.
\end{align*}
For $\xi$ in $\cN^Q_{loc}([0,\tau))$, we can define $I_t(\xi)$ for $t\le \tau_n$, for all $t$, hence we can define the It\^o integral before $\tau$, $I_t(\xi)$ for $t< \tau$. By \eqref{eq:Itoint_stopping}, this integral is well-defined ($P$-a.s. for all $t$ in $[0,\tau)$) and independent of the sequence $\tau_n$ chosen.
\end{itemize}
\end{setup}
The definition of It\^o process and It\^o formula can be extended to processes defined up to a $P$-a.s.\ finite stopping time $\tau$. An $H$-valued process $\xi$ on $[0,\tau]$ is called an It\^o process if there exist progressively measurable processes $B\colon [0,\tau]\times\Omega \to H$, $S\colon[0,\tau]\times\Omega \to L(E,H)$, with $B$ in $L^1_{P-a.s.}([0,\tau])$ and $S$ in $\cN^Q([0,\tau])$, that is
\begin{align*}
&\int_0^\tau \|B_t\|_H dt <\infty \quad P-\text{a.s.},\\
&\int_0^\tau \|S_t\|_{L_{2,Q}}^2 dt <\infty \quad P-\text{a.s.},
\end{align*}
such that, $P$-a.s.,
\begin{align*}
\xi_t = \xi_a +\int_a^t B_r dr +\int_a^t S_r dW_r,\quad \forall t\in[0,\tau].
\end{align*}
For such $\xi$, It\^o formula holds as in Theorem \ref{thm:Ito_formula}, replacing $[a,b]$ with $[0,\tau]$; actually the assumption of global boundedness on $f$ and its derivatives can be removed. This extension of It\^o formula follows again from \cite[Theorem 2.4]{BrNeVeWe2008} applied to $\xi_{t\wedge \tau}\one_{\{t\le \tau\}}$.
% can be replaced by $f$ bounded on bounded sets. This extension of It\^o formula follows again from Theorem 4.32 in DaPrato-Zabczyk applied to $\xi_{t\wedge \tau}1_{t\le \tau}$.

Similarly, given $\tau$ is an accessible stopping time and $\tau_n<\tau$ is a non-decreasing sequence of stopping time with $\tau^n\nearrow\tau$, an $H$-valued process $\xi$ on $[0,\tau)$ is called an It\^o process if there exist progressively measurable processes $B\colon [0,\tau)\times\Omega \to H$, $S\colon [0,\tau)\times\Omega \to L(E,H)$, with $B$ in $L^1_{P-a.s.,loc}([0,\tau))$ and $S$ in $\cN^Q_{loc}([0,\tau))$, that is
\begin{align*}
&\int_0^{\tau_n} \|B_t\|_H dt <\infty \quad \text{for every }n,\quad P-\text{a.s.},\\
&\int_0^{\tau_n} \|S_t\|_{L_{2,Q}}^2 dt <\infty \quad \text{for every }n,\quad P-\text{a.s.},
\end{align*}
such that, $P$-a.s.,
\begin{align*}
\xi_t = \xi_a +\int_a^t B_r dr +\int_a^t S_r dW_r,\quad \forall t\in[0,\tau).
\end{align*}
Also for such $\xi$, It\^o formula holds as in Theorem \ref{thm:Ito_formula}, replacing $[a,b]$ with $[0,\tau)$ and removing the boundedness assumption on $f$ and its derivatives. This follows applying It\^o formula to $\xi_t \one_{\{t\le \tau_n\}}$ and letting $n\to \infty$.

\subsection*{Stochastic differential equations in It\^o form}\addcontentsline{toc}{subsection}{Stochastic differential equations in It\^o form}

In this subsection we give the main local well-posedness result for stochastic differential equations in It\^o form, under locally Lipschitz coefficients. We consider the SDEs only on an open set where the coefficients are Lipschitz.

Having an $E$-valued $Q$-Brownian motion $W$ as in \ref{setup:QWiener} (under the setting in \ref{setup:filtration_standard}), we consider the It\^o SDE
\begin{align}
\begin{aligned}\label{eq:SDE_Ito}
&dX_t = b(t,X_t) dt +\sigma(t,X_t) dW_t,\\
&X_0 = \zeta.
\end{aligned}
\end{align}
Here the coefficients $b\colon [0,+\infty)\times H\rightarrow H$, $\sigma \colon [0,+\infty)\times H\rightarrow L(E,H)$, called resp. drift and diffusion coefficient, are given Borel functions, the initial datum $\zeta\colon \Omega\rightarrow H$ is a $\cF_0$-measurable random variable. A progressively measurable process $X$, defined on $[0,\tau)$ for some accessible stopping time $\tau$ with $\tau>0$ $P$-a.s., is called a local strong solution of \eqref{eq:SDE_Ito}, if $b(t,X_t)$, $\sigma(t,X_t)$ are resp. in $L^1_{P-a.s.,loc}([0,\tau))$ and in $\cN^Q_{loc}([0,\tau))$ and there holds, $P$-a.s.,
\begin{align}
X_t = \zeta +\int_0^t b(r,X_r) dr +\int_0^t \sigma(r,X_r) dW_r,\quad \forall t\in[0,\tau). \label{eq:SDE_Ito_def}
\end{align}
This definition extends to progressively measurable processes $X$, defined on the closed interval $[0,\tau]$ for some $P$-a.s. finite stopping time $\tau$ with $\tau>0$ $P$-a.s., requiring that $b(t,X_t)$, $\sigma(t,X_t)$ are resp. in $L^1_{P-a.s.}([0,\tau])$ and in $\cN^Q([0,\tau])$ and that \eqref{eq:SDE_Ito_def} holds for all $t$ in $[0,\tau]$. For notational simplicity, we also consider solutions where $\tau$ can be zero on a non-zero probability set (obviously a solution defined only at the initial time does not carry useful information).

\begin{rem}
The word ``strong'' here has to be intended in the probabilistic sense and refers to the fact that the solution is defined on a given filtered probability space, with a given Brownian motion; it is not to be confused with the notion of strong solutions in the analytic sense (roughly speaking, regular solutions).
\end{rem}

\begin{thm}\label{thm:wellpos_SDE_Ito}
Let $U$ be a bounded open set in $H$ and assume that, locally uniformly in $t$, $b(t,\cdot)$ is bounded and Lipschitz on $\bar{U}$ and $\sigma(t,\cdot)$ is $L_{2,Q}$-bounded and Lipschitz on $\bar{U}$, that is, for every $T>0$,
\begin{align*}
&\|b(t,x)\|_H\le C_T,\quad \|b(t,x)-b(t,y)\|_H\le C_T\|x-y\|_H, \quad \forall x,y\in \bar{U},\,\forall t\in[0,T],\\
&\|\sigma(t,x)\|_{L_{2,Q}}\le C_T,\quad \|\sigma(t,x)-\sigma(t,y)\|_{L_{2,Q}}\le C_T\|x-y\|_H, \quad \forall x,y\in \bar{U},\,\forall t\in[0,T].
\end{align*}
Then existence and uniqueness up to the first exit time from $U$ hold, that is, for every $T>0$: there exists a solution $X$ on $[0,\tau_U\wedge T]$, where $\tau_U$ is the exit time of $X$ from $U$, of the SDE \eqref{eq:SDE_Ito} and, if $\tilde{X}$ is another solution defined on $[0,\tilde{\tau}]$, then $\tilde{X}=X$ on $[0,\tilde{\tau}\wedge\tau_U\wedge T]$ $P$-a.s.; moreover $\tau_U>0$ $P$-a.s. on the set $\{\zeta\in U\}$.
\end{thm}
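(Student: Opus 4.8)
The plan is to reduce the local problem on $U$ to a global one with globally Lipschitz coefficients, solve that by a fixed-point argument, and then localize by stopping at the exit time from $U$. First I would extend the coefficients: since $\bar U$ is bounded and $b(t,\cdot)$, $\sigma(t,\cdot)$ are bounded and Lipschitz on $\bar U$ locally uniformly in $t$ (with constant $C_T$ on $[0,T]$), I would produce Borel functions $\tilde b\colon [0,\infty)\times H\to H$ and $\tilde\sigma\colon[0,\infty)\times H\to L(E,H)$ which agree with $b,\sigma$ on $[0,T]\times\bar U$ and are globally bounded and globally Lipschitz (for $\tilde\sigma$ in the $L_{2,Q}$ seminorm of Definition~\ref{defn:newnorm}), with constants controlled by $C_T$. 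For $\tilde b$ this is a Kirszbraun extension of an $H$-valued Lipschitz map; for $\tilde\sigma$ one first passes to the quotient of $L(E,H)$ by the $L_{2,Q}$-seminorm, which embeds isometrically into a Hilbert space, and then extends there, finally composing with the $1$-Lipschitz nearest-point retraction onto a large ball to restore global boundedness. Because any solution of \eqref{eq:SDE_Ito} that stays in $U$ only sees $b,\sigma$ on $\bar U$, it suffices to treat the extended equation and stop at the exit time from $U$.

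Next I would establish global existence and uniqueness for the extended SDE on each $[0,T]$ by Picard iteration. Setting $X^0_t\equiv\zeta$ and
\[
X^{n+1}_t = \zeta + \int_0^t \tilde b(r,X^n_r)\,dr + \int_0^t \tilde\sigma(r,X^n_r)\,dW_r,
\]
I would work in the Banach space of progressively measurable processes with norm $\|X\|_T^2 := E[\sup_{t\le T}\|X_t\|_H^2]$ (using a Bielecki-weighted variant, or iterating over subintervals, to cover the whole interval at once). The drift increment is controlled by Cauchy--Schwarz in time together with the global Lipschitz bound, while the diffusion increment is controlled by the Burkholder--Davis--Gundy inequality \eqref{eq:BDG} combined with \eqref{eq:Ito_isometry}; the resulting estimate $\|X^{n+1}-X^n\|_T^2 \le C(T)\|X^n-X^{n-1}\|_T^2$ gives a contraction for the weighted norm, hence a unique fixed point $\tilde X$ with $P$-a.s.\ continuous paths (continuity passing to the limit from the continuous Picard iterates). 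Uniqueness of the global solution follows from the same Lipschitz estimates via Gronwall's inequality applied to $E[\sup_{r\le t}\|X_r-X'_r\|^2]$.

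Finally I would localize. Let $\tau_U$ be the first exit time of $\tilde X$ from $U$; by continuity of paths it is accessible, and $\tau_U>0$ $P$-a.s.\ on $\{\zeta\in U\}$, exactly as in the discussion of exit times in \ref{setup:Proc:stop}. Since $\tilde b=b$, $\tilde\sigma=\sigma$ on $\bar U$ and $\tilde X_t\in U$ for $t<\tau_U$, the restriction of $\tilde X$ to $[0,\tau_U\wedge T]$ solves \eqref{eq:SDE_Ito} in the required sense, using \eqref{eq:Itoint_stopping} to identify the stopped stochastic integral with the integral of the stopped integrand. For the uniqueness claim, given another solution $\tilde X$ on $[0,\tilde\tau]$, both processes remain in $U$ on $[0,\tilde\tau\wedge\tau_U\wedge T)$ and there satisfy the same integral equation with Lipschitz data; stopping at the exit times and applying \eqref{eq:Ito_isometry}, \eqref{eq:BDG} and Gronwall forces them to coincide $P$-a.s.\ on that interval.

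The step I expect to be the main obstacle is the coefficient extension in the operator-valued, infinite-dimensional setting: one must construct a global Lipschitz extension of $\sigma$ measured in the $L_{2,Q}$ seminorm, which is only a seminorm when $Q$ is merely positive semidefinite, so one must pass to the quotient (pre-)Hilbert space before invoking a Kirszbraun-type argument, and then verify that the extended coefficients remain jointly Borel in $(t,x)$ — ensuring joint measurability of the extension (for instance through an explicit McShane-type formula applied componentwise, or a measurable selection) is the delicate point. Once $\tilde b,\tilde\sigma$ are in hand, the fixed-point and Gronwall estimates are routine given the Itô isometry and the BDG inequality already recorded in the appendix.
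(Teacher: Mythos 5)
Your overall strategy (globalize the coefficients, solve the global equation, localize at the exit time, uniqueness via BDG and Gronwall) matches the paper's, but the globalization step — which you yourself single out as the main obstacle — contains a genuine gap rather than just a delicate point. The Kirszbraun--Valentine extension of $\sigma$ is performed after passing to the quotient of $L(E,H)$ by the $L_{2,Q}$-seminorm and then to the completion of that quotient; but, as the paper remarks right after Definition~\ref{defn:newnorm}, $L(E,H)$ is \emph{not} complete in the $L_{2,Q}$-norm and its completion contains (classes of) unbounded operators. So your extended coefficient $\tilde\sigma(t,x)$ need not be representable by a bounded operator $E\to H$, and the extended SDE is then not an equation inside the framework the paper has set up: the spaces $\cM^Q$, $\cN^Q$, the It\^o integral, and the very definition of a solution of \eqref{eq:SDE_Ito} are all formulated for $L(E,H)$-valued integrands only. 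To run your Picard iteration you would first have to enlarge the integration theory to Hilbert--Schmidt-type integrands (this is the general Da Prato--Zabczyk setting, but it is precisely what the appendix deliberately avoids developing). On top of this, a Kirszbraun extension is non-unique and there is no McShane-type formula for Hilbert-space-valued targets, so joint Borel measurability of $(t,x)\mapsto(\tilde b,\tilde\sigma)(t,x)$ requires a measurable-selection argument that you would need to supply; your hypotheses give Lipschitz constants only locally uniformly in $t$, so the selection must also be done coherently in $t$.

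The paper's proof avoids both problems by truncating instead of extending. It takes $U_n=\{x\in H \mid \operatorname{dist}(x,U^c)>1/n\}$ and $C^1$ Lipschitz bump functions $\varphi_n$ with $\varphi_n\equiv 1$ on $U_n$, $\varphi_n\equiv 0$ on $U^c$ (such bumps exist on Hilbert spaces, \cite[15.9 (2)]{MR1471480}), and sets $b^n=b\,\varphi_n$, $\sigma^n=\sigma\,\varphi_n$. These truncations are automatically jointly Borel and remain $L(E,H)$-valued, since their values are scalar multiples of values of $b$ and $\sigma$; they are globally bounded and globally Lipschitz, so the classical global well-posedness theorem \cite[Theorem 7.2]{daPaZ} applies directly (no Picard iteration is redone). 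The price of the cutoff route is that no single cutoff can be $\equiv 1$ on all of $U$ while vanishing on $U^c$, so the paper must let $n\to\infty$: consistency of the $X^n$ up to $\tau_{U_n}$ follows from Lemma~\ref{lem:uniq_SDE_Ito}, and the solution is the limit, extended by continuity to $\tau_U$ using boundedness of $U$ and of the coefficients. Your localization and uniqueness steps (stopping, \eqref{eq:Itoint_stopping}, \eqref{eq:Ito_isometry}, \eqref{eq:BDG}, Gronwall) are essentially identical to the paper's Lemma~\ref{lem:uniq_SDE_Ito} and are fine; if you replace the extension step by cutoff multiplication (or first extend the appendix's integration theory to Hilbert--Schmidt integrands), the rest of your argument goes through.
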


The uniqueness part of the Theorem \ref{thm:wellpos_SDE_Ito} can be extended for two SDEs which coincide on $U$. For this, we consider another SDE
\begin{align*}
\begin{aligned}
&d\tilde{X}_t = \tilde{b}(t,\tilde{X}_t) dt +\tilde{\sigma}(t,\tilde{X}_t) dW_t,\\
&\tilde{X}_0 = \tilde{\zeta},
\end{aligned}
\end{align*}
with $\tilde{b}$ and $\tilde{\sigma}$ given Borel functions and $\tilde{\zeta}$ $\cF_0$-measurable.

\begin{lem}\label{lem:uniq_SDE_Ito}
Let $U$ be a bounded open set in $H$. Assume that, locally uniformly in $t$, $b(t,\cdot)$, $\tilde{b}(t,\cdot)$ coincide on $\bar{U}$ and are bounded and Lipschitz on $\bar{U}$ and $\sigma(t,\cdot)$, $\tilde\sigma(t,\cdot)$ coincide on $\bar{U}$ and are $L_{2,Q}$-bounded and Lipschitz on $\bar{U}$; assume also that $\zeta$ and $\tilde{\zeta}$ coincide on some $\cF_0$-measurable set $\Omega_0$. Let $X$, $\tilde{X}$ be two solutions, on $[0,\tau]$ and $[0,\tilde{\tau}]$ resp., of the SDEs driven by $b$, $\sigma$ and $\tilde{b}$, $\tilde{\sigma}$ resp.. Then $X$ and $\tilde{X}$ coincide on $[0,\tau\wedge \tilde{\tau}\wedge \tau_U]\times\Omega_0$ $P$-a.s., where $\tau_U$ is the first exit time of $X$ (or $\tilde{X}$) from $U$.
\end{lem}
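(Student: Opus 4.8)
The plan is to run the classical pathwise-uniqueness argument (It\^o isometry together with Gronwall's lemma), working only where both solutions remain in $\bar{U}$, so that the coincidence and Lipschitz hypotheses on the coefficients apply. Let $\tau_U^X$, $\tau_U^{\tilde{X}}$ denote the first exit times of $X$, $\tilde{X}$ from $U$, and set $\rho:=\tau\wedge\tilde{\tau}\wedge\tau_U^X\wedge\tau_U^{\tilde{X}}$. On $[0,\rho)$ both $X_r,\tilde{X}_r$ lie in $U\subseteq\bar{U}$, so $b=\tilde{b}$, $\sigma=\tilde{\sigma}$ on $\bar{U}$ and the Lipschitz bounds give, for $r<\rho$ and $t\le T$,
\begin{align*}
\|b(r,X_r)-\tilde{b}(r,\tilde{X}_r)\|_H &=\|\tilde{b}(r,X_r)-\tilde{b}(r,\tilde{X}_r)\|_H\le C_T\|X_r-\tilde{X}_r\|_H,\\
\|\sigma(r,X_r)-\tilde{\sigma}(r,\tilde{X}_r)\|_{L_{2,Q}} &\le C_T\|X_r-\tilde{X}_r\|_H.
\end{align*}
Subtracting the two equations \eqref{eq:SDE_Ito_def} and multiplying by $\one_{\Omega_0}$---permissible by \eqref{eq:F0_int} since $\Omega_0\in\cF_0$---kills the initial term because $\zeta=\tilde{\zeta}$ on $\Omega_0$, and expresses $Y_t:=(X_t-\tilde{X}_t)\one_{\Omega_0}$ through the integrals of these differences alone.

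Next I would choose a localizing sequence $\rho_n\nearrow\rho$ making the stopped integrands elements of $\cM^Q$, fix $T>0$, and set $g(t):=E[\|Y_{t\wedge\rho_n}\|_H^2]$, which is finite since $\bar{U}$ is bounded and the coefficients are bounded there. Applying Cauchy--Schwarz to the drift integral and the isometry \eqref{eq:Ito_isometry} to the diffusion integral, together with the two estimates above, yields
\begin{align*}
g(t)\le 2C_T^2(T+1)\int_0^t g(r)\,dr,\qquad t\in[0,T].
\end{align*}
As $Y_0=(\zeta-\tilde{\zeta})\one_{\Omega_0}=0$, Gronwall's lemma forces $g\equiv0$ on $[0,T]$; letting $n\to\infty$, $T\to\infty$ and invoking path-continuity gives $X_t=\tilde{X}_t$ for all $t\in[0,\rho)$ on $\Omega_0$, $P$-a.s.

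Finally I would reconcile $\rho$ with the interval $[0,\tau\wedge\tilde{\tau}\wedge\tau_U]$ in the statement. On the part of $\Omega_0$ where $\zeta\notin U$ that interval degenerates to $\{0\}$ and there is nothing to prove, so suppose $\zeta=\tilde{\zeta}\in U$. Since $X=\tilde{X}$ on $[0,\rho)\cap\Omega_0$, continuity gives $\tilde{X}_{\tau_U^{\tilde{X}}}=X_{\tau_U^{\tilde{X}}}$; were $\tau_U^{\tilde{X}}<\tau\wedge\tilde{\tau}\wedge\tau_U^X$ on a positive-probability subset of $\Omega_0$, then $X_{\tau_U^{\tilde{X}}}\in U$ (being strictly before $X$'s own exit), contradicting $\tilde{X}_{\tau_U^{\tilde{X}}}\in\partial U$. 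Hence $\tau_U^{\tilde{X}}\ge\tau\wedge\tilde{\tau}\wedge\tau_U^X$ on $\Omega_0$ and $\rho=\tau\wedge\tilde{\tau}\wedge\tau_U^X$ there; the symmetric argument handles the choice $\tau_U=\tau_U^{\tilde{X}}$, and continuity extends coincidence to the closed endpoint. I expect the real difficulty to lie not in the Gronwall estimate, which is routine, but precisely in this bookkeeping---guaranteeing that both solutions remain in $\bar{U}$, the region where the hypotheses are available---together with the use of the indicator identity \eqref{eq:F0_int} restricting to $\Omega_0$ and the localization up to the accessible stopping time $\rho$.
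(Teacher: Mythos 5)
Your proposal is correct and follows essentially the same route as the paper's proof: subtract the two equations, restrict to $\Omega_0$ via \eqref{eq:F0_int} and stop via \eqref{eq:Itoint_stopping}, use the coincidence and Lipschitz bounds on $\bar{U}$, and close with Gronwall (you use Cauchy--Schwarz plus the It\^o isometry \eqref{eq:Ito_isometry} on $E\|Y_{t\wedge\rho_n}\|^2$ where the paper uses H\"older plus BDG \eqref{eq:BDG} on the expected supremum --- an immaterial difference here). Your explicit reconciliation of the two exit times $\tau_U^X$, $\tau_U^{\tilde X}$ at the end is a welcome bit of rigor that the paper's proof leaves implicit in its phrase ``the first exit time of $X$ (or $\tilde X$)''.
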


\begin{proof}
We call $\rho = \tau\wedge \tilde{\tau}\wedge \tau_U$. The difference $(X_{t\wedge \rho}-\tilde{X}_{t\wedge \rho})\one_{\Omega_0}$ satisfies the equation
\begin{align*}
(X_{t\wedge \rho}-\tilde{X}_{t\wedge \rho})\one_{\Omega_0} &= \int_0^t [b(r,X_r)-b(r,\tilde{X}_r)]\one_{r\le \rho}\one_{\Omega_0} dr\\
&\quad +\int_0^t [\sigma(r,X_r)-\sigma(r,\tilde{X}_r)]\one_{\{r\le \rho\}}\one_{\Omega_0} dW_r,
\end{align*}
where we have used \eqref{eq:F0_int} and \eqref{eq:Itoint_stopping}. We take the expectation of the supremum of the squared norm: by H\"older inequality for the drift and BDG inequality \eqref{eq:BDG} for the diffusion, we get
%\begin{align*}
%E\sup_{t\in[0,T]} \|X_{t\wedge \rho}-\tilde{X}_{t\wedge \rho}1_{\Omega_0}\|^2 \le C E \sup_{t\in[0,T]}\left\|\int_0^t [b(r,X_r)-b(r,\tilde{X}_r)]1_{r\le \rho}1_{\Omega_0} dr\right\|^2 +C E\sup_{t\in[0,T]}\left\|\int_0^t [\sigma(r,X_r)-\sigma(r,\tilde{X}_r)]1_{r\le \rho}1_{\Omega_0} dW_r\right\|^2.
%\end{align*}
%By H\"older inequality and BDG inequality \eqref{eq:BDG}, we obtain
\begin{align*}
E\sup_{t\in[0,T]} \|X_{t\wedge \rho}-\tilde{X}_{t\wedge \rho}\|^2 \one_{\Omega_0} &\le C \int_0^T E \|b(r,X_r)-b(r,\tilde{X}_r)\|^2 \one_{\{r\le \rho\}}\one_{\Omega_0} dr\\
&\quad +C \int_0^t E\|\sigma(r,X_r)-\sigma(r,\tilde{X}_r)\|_{L_2,Q}^2 \one_{\{r\le \rho\}}\one_{\Omega_0} dr.
\end{align*}
By the Lipschitz property of $b$ and $\sigma$ on $U$, we obtain
\begin{align*}
E\sup_{t\in[0,T]} \|X_{t\wedge \rho}-\tilde{X}_{t\wedge \rho}\|^2 \one_{\Omega_0} \le C \int_0^T E \sup_{t\in[0,r]} \|X_{t\wedge \rho}-\tilde{X}_{t\wedge \rho}\|^2 \one_{\Omega_0} dr
\end{align*}
(the constant $C$ possibly depending on $T$). We apply Gronwall inequality and get
\begin{align*}
E\sup_{t\in[0,T]} \|X_{t\wedge \rho}-\tilde{X}_{t\wedge \rho}\|^2 \one_{\Omega_0} \le 0,
\end{align*}
that is uniqueness.
\end{proof}

\begin{proof}[Proof of Theorem \ref{thm:wellpos_SDE_Ito}]
Uniqueness is a particular case of Lemma \ref{lem:uniq_SDE_Ito}. For existence, we take, for any positive integer $n$,
\begin{align*}
U_n = \{ x\in H \mid \text{dist}(x,U^c)> 1/n \}\subseteq U.
\end{align*}
For each $n$ we deduce from \cite[15.9 (2)]{MR1471480} that there exists a map $\varphi_n\colon H\rightarrow \R$, $C^1$ with both $\varphi_n$ and its derivative globally Lipschitz continuous (whence bounded), such that $\varphi_n\equiv 1$ on $U_n$ and $\varphi_n\equiv 0$ on $U^c$. Therefore the coefficients
\begin{align*}
b^n(t,x) = b(t,x)\varphi_n(x),\quad \sigma^n(t,x) = \sigma(t,x)\varphi_n(x)
\end{align*}
are globally Lipschitz and globally bounded on $H$, uniformly in $t$ in $[0,T]$. Hence the SDE on $H$
\begin{align*}
dX^n_t = b^n(t,X^n_t)dt +\sigma^n(t,X^n_t) dW_t
\end{align*}
admits a (unique) solution $X^n$, by classical well-posedness theory (see e.g. \cite[Theorem 7.2]{daPaZ}, applied with $A=0$ there). For any $m>n$, since $b^m=b^n$ and $\sigma^n=\sigma^m$ on $U_n$, by Lemma \ref{lem:uniq_SDE_Ito} $X^m$ and $X^n$ coincide up to the first exit time $\tau_{U_n}$ from $U^n$. Hence, calling $\tau_U$ the limit of the non-decreasing sequence $\tau_{U_n}$, $P$-a.s., for each $t<\tau_U$, the sequence $X^n_t \one_{\{t\le \tau_n\}}$ is definitely constant, in particular the limit
\begin{align*}
X_t = \lim_n X^n_t,\quad t<\tau_U
\end{align*}
is well-defined $P$-a.s. and $\tau_U$ is the (possibly infinite) exit time of $X$ from $U$. Using \eqref{eq:Itoint_stopping}, one can check easily that $X$ is a solution to \eqref{eq:SDE_Ito} on $[0,\tau_U)$. Finally, since $U$ is bounded and $b$ and $\sigma$ are bounded, if $\tau_U$ is finite, then $X$ is extended by continuity to $\tau_U$ and this extension is still a solution to \eqref{eq:SDE_Ito}. This shows existence for the SDE \eqref{eq:SDE_Ito}. The fact that $\tau_U>0$ $P$-a.s. on $\{\zeta\in U\}$ follows from the continuity of paths of $X$. The proof is complete.
\end{proof}

Note that, by \eqref{eq:L_LQ}, if the diffusion coefficient $\sigma$ is Lipschitz in the $L(E,H)$ norm on some set $U$ (locally uniformly in $t$), then it is also Lipschitz in the $L_{2,Q}$ norm (locally uniformly in $t$), hence the previous results apply.

\begin{rem}\label{rmk:Ito_random_drift}
The definition of solution to an SDE and the existence and uniqueness Theorem \ref{thm:wellpos_SDE_Ito} can be extended to the case of a random drift, under the following assumption: given an accessible stopping time $\tau$, the drift $b\colon [0,\tau)\times\Omega\times H \rightarrow H$ is such that
\begin{itemize}
\item for every $x$ in $\bar{U}$, $b(\cdot,\cdot,x)$ is progressively measurable, and
\item it holds $P$-a.s.: for every $t$ in $[0,\tau)$, $b(t,\omega,\cdot)$ is Lipschitz continuous on $\bar{U}$, uniformly with respect to $(t,\omega)$.
\end{itemize}
Then existence and uniqueness for the SDE \eqref{eq:SDE_Ito} hold on $[0,\tau_U\wedge \tau)$. The proof proceeds as in the proof of Theorem \ref{thm:wellpos_SDE_Ito} (noting that \cite[Theorem 7.2]{daPaZ} applies also to random drifts).
\end{rem}

\subsection*{Stratonovich integral and Stratonovich differential equations}\addcontentsline{toc}{subsection}{Stratonovich integral and Stratonovich differential equations}

Here we recall the Stratonovich stochastic integral and Stratonovich SDEs. While not a martingale, the Stratonovich integral has the advantage of a classical chain rule, without second order terms, hence it is more suited to be extended to the manifold case. The price to pay is an additional degree of regularity for well-posedness of SDEs driven by Stratonovich integrals.

\begin{defn}Given an $E$-valued $Q$-Brownian motion $W$, \ref{setup:QWiener} (under the setting in \ref{setup:filtration_standard}) and a $P$-a.s. finite stopping time $\tau$, we introduce the space $\cN([0,\tau])$ of $L(E,H)$-progressively measurable processes such that
\begin{align*}
\int_0^\tau \|\xi_t\|_{L(E,H)}^2 dt <\infty\quad P-\text{a.s.}.
\end{align*}
\end{defn}

Let $\xi$ be an It\^o process on $[0,\tau]$ of the form
\begin{align*}
d\xi_t = B_t dt +S_t dW_t,
\end{align*}
where $B$ is progressively measurable and in $L^1_{P-a.s.}([0,\tau])$ and $S$ is in $\cN([0,\tau])$ (note that this requirement is stronger than just $S$ in $\cN^Q([0,\tau])$).

\begin{defn}
Let $\xi$ be as before and let $g\colon H\to L(E,H')$ be a $C^1$ map, where $H'$ is another separable Hilbert space. The Stratonovich stochastic integral of $g(\xi)$ in $dW$ is the $H'$-valued progressively measurable process on $[0,\tau]$ defined by
\begin{align}
\int_0^t g(\xi_r) \bullet d W_r := \int_0^t g(\xi_r) dW_r +\frac12 \int_0^t \tr[g'(\xi_r)S_rQ] dr, \quad t\in[0,\tau].\label{eq:Strat_def}
\end{align}
where, for $(e_k)_k$ complete orthonormal basis of $E$,
\begin{align*}
\tr[g'(\xi_r)S_rQ] = \sum_k [g'(\xi_r)S_rQ^{1/2}e_k][Q^{1/2}e_k].
\end{align*}
\end{defn}

For $x,v$ in $H$, $g'(x).v$ is in $L(E,H')$, so the It\^o-Stratonovich correction, that is the last term in \eqref{eq:Strat_def}, makes sense. It is well-defined and progressively measurable as
\begin{align*}
H\times L(E,H)\ni (x,S) \mapsto \tr[g'(x)SQ]\in H'
\end{align*}
is continuous: indeed, taking $(e_k)_k$ basis of eigenfunctions of $Q$ with eigenvalues $\lambda_k$, we have
\begin{align*}
\sum_k \|[g'(x)SQ^{1/2}e_k][Q^{1/2}e_k]\|_{H'}&\le \|g'(x)\|_{L(H,L(E,H'))} \|S\|_{L(E,H)} \sum_k\lambda_k\\
&= \|g'(x)\|_{L(H,L(E,H'))} \|S\|_{L(E,H)} \tr[Q]
\end{align*}
and similarly
\begin{align*}
\sum_k \|[(g'(x)-g'(y))SQ^{1/2}e_k][Q^{1/2}e_k]\|_{H'}\le \|g'(x)-g'(y)\|_{L(H,L(E,H'))} \|S\|_{L(E,H)} \tr[Q].
\end{align*}

\begin{rem}\label{rem:Strat_def}
By this very definition, the Stratonovich integral is defined in terms of $\xi$, $g$ and $S$ separately: even if $g(\xi)=h(\eta)$ for another function $h$ and It\^o process $\eta$, the definition for the Stratonovich integral of $h(\eta)$ may give a different object. Actually one can show that the integral depends only on $g(\xi)$, regardless the way it is written, using the concept of quadratic variation (see also \cite[Theorem 3.7]{BaE01} without using quadratic variation). But we will not use this fact and understand the Stratonovich integral as function of $\xi$, $g$ and $S$.
\end{rem}
%One can also verify that the Stratonovich integral does not depend on the particular way the integrand is written: precisely, if $\varphi:H\to \tilde{H}$ is a $C^2$ map between Hilbert spaces, $h\colon \tilde{H}\mapsto L(E,H')$ is a $C^1$ function, $\xi$ is an It\^o process on $H$ and $\eta=\varphi(\xi)$, $g=h(\varphi)$, then
%\begin{align*}
%\int_0^t h(\eta) \bullet dW = \int_0^t g(\xi) \bullet dW.
%\end{align*}

For $\tau$ accessible stopping time, the above definition can be extended to It\^o processes $\xi$ on $[0,\tau)$ in the form
\begin{align*}
d\xi_t = B_t dt +S_t dW_t,
\end{align*}
where $B$ is progressively measurable and in $L^1_{P-a.s.,loc}([0,\tau))$ and $S$ is in $\cN_{loc}([0,\tau))$, the space of progressively measurable processes with
\begin{align*}
\int_0^{\tau_n} \|S_t\|_{L(E,H)}^2 dt <\infty\quad \text{for every }n,\quad P-\text{a.s.}.
\end{align*}

Now we consider Stratonovich stochastic differential equations, in this case autonomous, namely
\begin{align}
\begin{aligned}\label{eq:SDE_Strat}
&dX_t = b(X_t)dt +\sigma(X_t) \bullet dW_t,\\
&X_0 = \zeta.
\end{aligned}
\end{align}
Also here the drift $b\colon H\rightarrow H$ and the diffusion coefficient $\sigma\colon H\rightarrow L(E,H)$ are given functions assumed continuous and in $C^1$ resp., the initial datum $\zeta\colon \Omega\rightarrow H$ is a $\cF_0$-measurable random variable. A progressively measurable process $X$, defined on $[0,\tau)$ for some accessible stopping time $\tau$ with $\tau>0$ $P$-a.s., is called a local strong solution if it has $P$-a.s. continuous paths and there holds, $P$-a.s. for all $t\in[0,\tau)$
\begin{align}
X_t = \zeta +\int_0^t b(X_r) dr +\int_0^t \sigma(X_r) dW_r +\frac12 \int_0^t \tr[\sigma'(X_r)\sigma(X_r)Q] dr . \label{eq:SDE_Strat_def}
\end{align}
This definition extends to progressively measurable processes $X$, defined on the closed interval $[0,\tau]$ for some $P$-a.s. finite stopping time $\tau$ with $\tau>0$ $P$-a.s., requiring that \eqref{eq:SDE_Strat_def} holds for all $t$ in $[0,\tau]$. Moreover, if $X$ takes values $P$-a.s. in an open subset $U$ of $H$, it is enough that $b$ and $\sigma$ are defined and in $C^0$, in $C^1$ resp. on $U$.

Note that, since $b$ and $\sigma$ are resp. in $C$ and in $C^1$, all the integrals are well-defined. Note also that, if $X$ is a solution, then $X$ is an It\^o process and the last two terms in \eqref{eq:Strat_def} are the Stratonovich integral
\begin{align*}
\int_0^t \sigma(X_r) \bullet dW_r = \int_0^t \sigma(X_r) dW_r +\frac12 \int_0^t \tr[\sigma'(X_r)\sigma(X_r)Q] dr.
\end{align*}

We can extend the above definitions and properties to Stratonovich differentials, namely, on $[0,\tau)$ (or $[0,\tau]$),
\begin{align*}
dX = Bdt +\sigma(X_t) \bullet dW_t,
\end{align*}
where $\sigma$ is $C^1$ and $B$ is progressively measurable and in $L^1_{P-a.s.,loc}([0,\tau))$\\ (or in $L^1_{P-a.s.}([0,\tau])$): that is, $X$ satisfies
\begin{align}
X_t = X_0 +\int_0^t B_r dr +\int_0^t \sigma(X_r) dW_r +\frac12 \int_0^t \tr[\sigma'(X_r)\sigma(X_r)Q] dr, \quad \forall t\in[0,\tau).\label{eq:diff_Strat_def}
\end{align}

\begin{thm}[It\^o formula for Stratonovich differentials]\label{thm:Ito_formula_Strat}
Assume that $X$ satisfies \eqref{eq:diff_Strat_def}, with $B$, $\sigma$ as above. Let $f\colon H\to \tilde{H}$ be a $C^2$ function, where $\tilde{H}$ is another Hilbert space. Then it holds, $P$-a.s.,
\begin{align}
f(X_t) &= f(\zeta) +\int_0^t D_x f(X_r) B_r dr +\int_0^t D_x f(X_r) \sigma(X_r) \bullet dW_r, \quad \forall t\in [0,\tau).\label{eq:Ito_formula_Strat}
\end{align} 
\end{thm}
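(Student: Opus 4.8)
The plan is to reduce the statement to the It\^o formula of Theorem~\ref{thm:Ito_formula} (in the version valid up to a stopping time, recorded just after that theorem) and then to absorb the second-order It\^o correction together with the It\^o--Stratonovich correction already present in \eqref{eq:diff_Strat_def} into the single Stratonovich correction that defines $\int_0^t D_x f(X_r)\sigma(X_r)\bullet dW_r$ via \eqref{eq:Strat_def}. First I would observe that $X$ is genuinely an It\^o process: rewriting \eqref{eq:diff_Strat_def} in It\^o form, its drift part is $\beta_r := B_r + \tfrac12 \tr[\sigma'(X_r)\sigma(X_r)Q]$ and its diffusion part is $S_r := \sigma(X_r)$. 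The extra drift term is locally integrable because $\sigma \in C^1$ and $X$ has continuous paths, so $r\mapsto \tr[\sigma'(X_r)\sigma(X_r)Q]$ is continuous (by the same trace-class estimate that makes \eqref{eq:Strat_def} well-defined), and $S \in \cN_{loc}([0,\tau)) \subseteq \cN^Q_{loc}([0,\tau))$ by \eqref{eq:L_LQ}. Hence Theorem~\ref{thm:Ito_formula}, applied to the $C^2$ (time-independent) map $f$, yields $P$-a.s.
\begin{align*}
f(X_t) = f(X_0) &+ \int_0^t D_x f(X_r) B_r\, dr + \tfrac12\int_0^t D_x f(X_r)\,\tr[\sigma'(X_r)\sigma(X_r)Q]\, dr \\
&+ \int_0^t D_x f(X_r)\sigma(X_r)\, dW_r + \tfrac12\int_0^t \tr[D_x^2 f(X_r)(\sigma(X_r)Q^{1/2})(\sigma(X_r)Q^{1/2})^*]\, dr.
\end{align*}

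The heart of the argument is a Leibniz computation for the $C^1$ map $g\colon H \to L(E,\tilde H)$, $g(x) := D_x f(x)\,\sigma(x)$, whose Stratonovich correction (with diffusion $S_r=\sigma(X_r)$) is exactly what should appear on the right of \eqref{eq:Ito_formula_Strat}. Writing $w_k := \sigma(X_r)Q^{1/2}e_k$ and using the product rule $g'(x)v = (D_x^2 f(x)v)\sigma(x) + D_x f(x)(\sigma'(x)v)$, I would expand
\begin{align*}
\tr[g'(X_r)\sigma(X_r)Q] &= \sum_k \big(g'(X_r)w_k\big)(Q^{1/2}e_k) \\
&= \sum_k D_x^2 f(X_r)[w_k][w_k] + D_x f(X_r)\Big(\sum_k (\sigma'(X_r)w_k)(Q^{1/2}e_k)\Big),
\end{align*}
where in the first sum I used $\sigma(X_r)Q^{1/2}e_k = w_k$ and in the second the continuity and linearity of the operator $D_x f(X_r)$ to pull it out of the absolutely convergent series. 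The first sum is precisely $\tr[D_x^2 f(X_r)(\sigma(X_r)Q^{1/2})(\sigma(X_r)Q^{1/2})^*]$ and the second equals $D_x f(X_r)\,\tr[\sigma'(X_r)\sigma(X_r)Q]$, so that
\begin{align*}
\tfrac12\tr[g'(X_r)\sigma(X_r)Q] = \tfrac12\tr[D_x^2 f(X_r)(\sigma(X_r)Q^{1/2})(\sigma(X_r)Q^{1/2})^*] + \tfrac12 D_x f(X_r)\,\tr[\sigma'(X_r)\sigma(X_r)Q].
\end{align*}
This identifies the sum of the two correction integrals in the It\^o expansion above with the single Stratonovich correction of $g$.

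Finally I would collect terms: by the very definition \eqref{eq:Strat_def} of the Stratonovich integral,
\begin{align*}
\int_0^t D_x f(X_r)\sigma(X_r)\, dW_r + \tfrac12\int_0^t \tr[g'(X_r)\sigma(X_r)Q]\, dr = \int_0^t D_x f(X_r)\sigma(X_r)\bullet dW_r,
\end{align*}
so the It\^o expansion collapses to \eqref{eq:Ito_formula_Strat} (with $X_0=\zeta$). The remaining routine checks are that $g(X) = D_x f(X)\sigma(X) \in \cN_{loc}([0,\tau))$, so its Stratonovich integral is defined, which follows from continuity of $D_x f$ and $\sigma$ along the continuous path $X$, and that all the traces converge absolutely, which follows from the trace-class estimate for $Q$ as in \eqref{eq:Strat_def}. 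I expect the main obstacle to be purely bookkeeping: keeping straight which slot of the multilinear maps $D_x^2 f$ and $g'$ each copy of $w_k=\sigma(X_r)Q^{1/2}e_k$ occupies, so that the product rule for $g'$ separates cleanly into exactly the It\^o second-order term and the $D_x f$-image of the It\^o--Stratonovich correction carried in the drift of $X$.
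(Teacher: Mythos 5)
Your proposal is correct and follows essentially the same route as the paper's proof: apply Theorem~\ref{thm:Ito_formula} (in its stopped, unbounded version) to the It\^o form of $X$, then establish the trace identity \eqref{eq:Ito_form_proof} via the Leibniz rule for $D_x(D_xf\,\sigma)$ evaluated at $v=\sigma(X_t)Q^{1/2}e_k$, paired with $Q^{1/2}e_k$ and summed over $k$, so that the two correction terms merge into the single Stratonovich correction. Your additional remarks on absolute convergence of the series and on pulling the bounded operator $D_xf(X_r)$ out of it are exactly the justification the paper leaves implicit.
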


\begin{proof}
We apply It\^o formula \ref{thm:Ito_formula}, extended to possibly unbounded $C^2$ functions and to processes defined up to $\tau$, as seen in the localization subsection \ref{setup:Proc:extension}, to the It\^o process $X$ in \eqref{eq:diff_Strat_def} and to $f$:
\begin{align*}
df(X_t) &= D_xf(X_t)B_t dt +D_xf(X_t)\sigma(X_t) dW_t +\frac12 D_xf(X_t) \tr[\sigma'(X_t)\sigma(X_t)Q] dt +\\
&\quad +\frac12 \tr[D^2_xf(X_t)(\sigma(X_t)Q^{1/2})(\sigma(X_t)Q^{1/2})^*] dt
\end{align*}
To prove \eqref{eq:Ito_formula_Strat}, we have to show that
\begin{align}
\begin{aligned}\label{eq:Ito_form_proof}
\tr[D_x(D_xf \sigma)(X_t)\sigma(X_t)Q] &= D_xf(X_t) \tr[\sigma'(X_t)\sigma(X_t)Q]\\
&\quad + \tr[D^2_xf(X_t)(\sigma(X_t)Q^{1/2})(\sigma(X_t)Q^{1/2})^*].
\end{aligned}
\end{align}
Now we note that $D_xf \sigma$ is in $C^1(H,L(E,H))$ and, for any $x,v$ in $H$, it holds
\begin{align*}
D_x(D_xf \sigma)(x)[v] = D_xf(x)[\sigma'(x) v] +D^2_xf(x)[v][\sigma(x)]
\end{align*}
as equality in $L(E,H)$. We take $x=X_t$, $v=\sigma(X_t)Q^{1/2}e_k$ and compose with $Q^{1/2}e_k$: we obtain, for each $k$, $P$-a.s. for every $t<\tau$,
\begin{align*}
D_x(D_xf \sigma)(X_t)[\sigma(X_t)Q^{1/2}e_k][Q^{1/2}e_k] &= D_xf(X_t)[\sigma'(X_t)\sigma(X_t)Q^{1/2}e_k][Q^{1/2}e_k]\\
&\quad +D^2_xf(x)[\sigma(X_t)Q^{1/2}e_k][\sigma(X_t)Q^{1/2}e_k].
\end{align*}
Summing over $k$, we get \eqref{eq:Ito_form_proof}. The proof is complete.
\end{proof}

The local well-posedness result requires the diffusion coefficient $\sigma$ to have one more degree of regularity (with respect to the analogue result in the It\^o context):

\begin{thm}\label{thm:wellpos_SDE_Strat}
Let $U$ be a bounded open set in $H$ and assume that $b$ is in $C^{0,1}(\bar{U},H)$, that is Lipschitz on $\bar{U}$, and $\sigma$ is in $C^{1,1}(\bar{U},L(E,H))$, that is $C^1$ from $\bar{U}$ to $L(E,H)$, with $\sigma'$ Lipschitz from $\bar{U}$ to $L(H,L(E,H))$. Then existence and uniqueness up to the first exit time from $U$ hold, that is, for every $T>0$: there exists a solution $X$ on $[0,\tau_U\wedge T]$, where $\tau_U$ is the exit time of $X$ from $U$, of the SDE \eqref{eq:SDE_Strat} and, if $\tilde{X}$ is another solution defined on $[0,\tilde{\tau}]$, then $\tilde{X}=X$ on $[0,\tilde{\tau}\wedge\tau_U\wedge T]$ $P$-a.s.; moreover $\tau_U>0$ $P$-a.s. on the set $\{\zeta\in U\}$.
\end{thm}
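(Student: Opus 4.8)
The plan is to reduce the Stratonovich equation \eqref{eq:SDE_Strat} to an equivalent It\^o equation and then apply the It\^o-form well-posedness result, Theorem~\ref{thm:wellpos_SDE_Ito}. Comparing the definition of a local Stratonovich solution \eqref{eq:SDE_Strat_def} with the It\^o solution concept \eqref{eq:SDE_Ito_def}, a process $X$ with paths in $U$ solves \eqref{eq:SDE_Strat} on $[0,\tau)$ (or $[0,\tau]$) if and only if it solves the It\^o SDE with the same diffusion coefficient $\sigma$ and the corrected drift
\begin{align*}
\tilde{b}(x) := b(x) + \frac12 \tr[\sigma'(x)\sigma(x)Q].
\end{align*}
It therefore suffices to verify that $\tilde{b}$ and $\sigma$ meet the hypotheses of Theorem~\ref{thm:wellpos_SDE_Ito} on $\bar{U}$; existence, uniqueness and positivity of the exit time then transfer verbatim to the Stratonovich equation.

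Since $\sigma \in C^{1,1}(\bar{U},L(E,H))$ and $\bar{U}$ is bounded, both $\sigma$ and $\sigma'$ have bounded range on $\bar{U}$ (a Lipschitz map on a bounded set is bounded), and $\sigma$ is Lipschitz on $\bar{U}$. By \eqref{eq:L_LQ} this makes $\sigma$ both $L_{2,Q}$-bounded and $L_{2,Q}$-Lipschitz on $\bar{U}$, so the diffusion hypothesis of Theorem~\ref{thm:wellpos_SDE_Ito} holds. It remains to control the corrector $c(x) := \frac12 \tr[\sigma'(x)\sigma(x)Q]$. Boundedness follows from the trace estimate recorded after \eqref{eq:Strat_def}, namely
\begin{align*}
\|c(x)\|_H \le \tfrac12 \tr[Q]\, \|\sigma'(x)\|_{L(H,L(E,H))}\, \|\sigma(x)\|_{L(E,H)}.
\end{align*}
For the Lipschitz bound I would split
\begin{align*}
\sigma'(x)\sigma(x) - \sigma'(y)\sigma(y) = \sigma'(x)\bigl(\sigma(x)-\sigma(y)\bigr) + \bigl(\sigma'(x)-\sigma'(y)\bigr)\sigma(y)
\end{align*}
and apply the same trace estimate termwise, using that $\sigma'$ is bounded, $\sigma$ is Lipschitz, $\sigma$ is bounded, and $\sigma'$ is Lipschitz. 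This shows $c$, and hence $\tilde{b} = b + c$, is bounded and Lipschitz on $\bar{U}$.

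With the hypotheses verified, Theorem~\ref{thm:wellpos_SDE_Ito} furnishes a solution $X$ on $[0,\tau_U\wedge T]$ to the It\^o SDE with drift $\tilde{b}$ and diffusion $\sigma$, unique up to the first exit time from $U$, with $\tau_U>0$ $P$-a.s.\ on $\{\zeta\in U\}$; by the equivalence noted in the first paragraph this is exactly the desired Stratonovich statement. Uniqueness transfers because any two Stratonovich solutions are It\^o solutions of the one corrected SDE, to which the uniqueness half of Theorem~\ref{thm:wellpos_SDE_Ito} (via Lemma~\ref{lem:uniq_SDE_Ito}) applies. The one genuinely new point compared to the It\^o case is the Lipschitz estimate for the It\^o--Stratonovich corrector $c$: it is precisely here that the extra degree of regularity $\sigma \in C^{1,1}$ (rather than merely $C^{0,1}$) is consumed, since one needs $\sigma'$ itself to be Lipschitz for $c$ to be Lipschitz.
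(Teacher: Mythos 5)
Your proposal is correct and follows essentially the same route as the paper's own proof: rewrite the Stratonovich equation in It\^o form with the corrected drift, check that $\sigma$ is $L_{2,Q}$-bounded and Lipschitz via \eqref{eq:L_LQ}, and show the It\^o--Stratonovich corrector is bounded and Lipschitz by the same product-splitting and trace estimate, then invoke Theorem~\ref{thm:wellpos_SDE_Ito} and Lemma~\ref{lem:uniq_SDE_Ito}. Your closing observation that the extra regularity $\sigma\in C^{1,1}$ is consumed exactly in the Lipschitz estimate for the corrector matches the paper's rationale precisely.
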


As for the It\^o SDE, we can extend uniqueness to two SDEs which coincide on $U$. For this, we consider the Stratonovich SDE
\begin{align*}
\begin{aligned}
&d\tilde{X}_t = \tilde{b}(X_t)dt +\tilde{\sigma}(X_t) \bullet dW_t,\\
&\tilde{X}_0 = \tilde{\zeta}.
\end{aligned}
\end{align*}

\begin{lem}\label{lem:uniq_SDE_Strat}
Let $U$ be a bounded open set in $H$. Assume that $b$, $\tilde{b}$ coincide on $\bar{U}$ and are in $C^{0,1}$ on $\bar{U}$ and $\sigma$, $\tilde\sigma$ coincide on $\bar{U}$ and are $C^{1,1}$ (as $L(E,H)$-valued maps) on $\bar{U}$; assume also that $\zeta$ and $\tilde{\zeta}$ coincide on some $\cF_0$-measurable set $\Omega_0$. Let $X$, $\tilde{X}$ be two solutions, on $[0,\tau]$ and $[0,\tilde{\tau}]$ resp., of the SDEs driven by $b$, $\sigma$ and $\tilde{b}$, $\tilde{\sigma}$ resp.. Then $X$ and $\tilde{X}$ coincide on $[0,\tau\wedge \tilde{\tau}\wedge \tau_U]\times\Omega_0$ $P$-a.s., where $\tau_U$ is the first exit time of $X$ (or $\tilde{X}$) from $U$.
\end{lem}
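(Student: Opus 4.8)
The plan is to reduce the statement to the It\^o uniqueness result Lemma \ref{lem:uniq_SDE_Ito} by rewriting each Stratonovich SDE in It\^o form. By the very definition of a solution, \eqref{eq:SDE_Strat_def}, a solution $X$ of the Stratonovich equation driven by $(b,\sigma)$ is an It\^o process satisfying
\begin{align*}
X_t = \zeta + \int_0^t \hat{b}(X_r)\,dr + \int_0^t \sigma(X_r)\,dW_r, \qquad \hat{b}(x) := b(x) + \tfrac12 \tr[\sigma'(x)\sigma(x)Q],
\end{align*}
and similarly $\tilde X$ solves the It\^o equation driven by $(\hat{\tilde b},\tilde\sigma)$ with $\hat{\tilde b}(x) := \tilde b(x) + \tfrac12\tr[\tilde\sigma'(x)\tilde\sigma(x)Q]$. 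Hence it suffices to verify that the It\^o data $(\hat b,\sigma)$ and $(\hat{\tilde b},\tilde\sigma)$ satisfy the hypotheses of Lemma \ref{lem:uniq_SDE_Ito} on $\bar U$; the conclusion $X=\tilde X$ on $[0,\tau\wedge\tilde\tau\wedge\tau_U]\times\Omega_0$ then follows directly.

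For the diffusion this is immediate: $\sigma=\tilde\sigma$ on $\bar U$ by hypothesis, and since $\sigma\in C^{1,1}(\bar U,L(E,H))$ it is bounded and Lipschitz in operator norm on the bounded set $\bar U$, hence $L_{2,Q}$-bounded and Lipschitz by \eqref{eq:L_LQ}. For the drifts, I would first observe that $\sigma=\tilde\sigma$ on the open set $U$ forces $\sigma'=\tilde\sigma'$ on $U$, and then on $\bar U$ by continuity of the derivatives; together with $b=\tilde b$ on $\bar U$ this yields $\hat b=\hat{\tilde b}$ on $\bar U$. What remains --- and what I expect to be the only genuine computation --- is to check that the It\^o--Stratonovich correction $x\mapsto \tr[\sigma'(x)\sigma(x)Q]$ is bounded and Lipschitz on $\bar U$ (and likewise for $\tilde\sigma$).

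I would establish this using the trace bounds already recorded in the definition of the Stratonovich integral. Writing the difference as
\begin{align*}
\tr[\sigma'(x)\sigma(x)Q] - \tr[\sigma'(y)\sigma(y)Q] = \tr[(\sigma'(x)-\sigma'(y))\sigma(x)Q] + \tr[\sigma'(y)(\sigma(x)-\sigma(y))Q]
\end{align*}
and applying the estimate $\sum_k\|[g'(x)SQ^{1/2}e_k][Q^{1/2}e_k]\|_H \le \|g'(x)\|_{L(H,L(E,H))}\|S\|_{L(E,H)}\tr[Q]$ to each summand, the norm of the difference is bounded by $\big(\|\sigma'(x)-\sigma'(y)\|\,\|\sigma(x)\| + \|\sigma'(y)\|\,\|\sigma(x)-\sigma(y)\|\big)\tr[Q]$. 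Since $\sigma\in C^{1,1}(\bar U)$ makes both $\sigma$ and $\sigma'$ bounded and Lipschitz on the bounded set $\bar U$, every factor is controlled, so the correction is Lipschitz; boundedness follows at once from the same trace bound applied to a single term. With these properties in hand, $(\hat b,\sigma)$ and $(\hat{\tilde b},\tilde\sigma)$ meet the assumptions of Lemma \ref{lem:uniq_SDE_Ito}, and the Gronwall estimate carried out in its proof gives $X=\tilde X$ on $[0,\tau\wedge\tilde\tau\wedge\tau_U]\times\Omega_0$, $P$-a.s.
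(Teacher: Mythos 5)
Your proposal is correct and follows essentially the same route as the paper: both rewrite the Stratonovich equations in It\^o form (the correction term being built into the definition \eqref{eq:SDE_Strat_def}), reduce to Lemma \ref{lem:uniq_SDE_Ito}, and verify its hypotheses by the same decomposition $\tr[(\sigma'(x)-\sigma'(y))\sigma(x)Q]+\tr[\sigma'(y)(\sigma(x)-\sigma(y))Q]$ together with the trace bound involving $\tr[Q]$. Your explicit remark that $\sigma=\tilde\sigma$ on $U$ forces $\sigma'=\tilde\sigma'$ on $\bar U$ by continuity, so that the It\^o drifts coincide, is a small point the paper leaves implicit, and it is a welcome clarification rather than a deviation.
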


\begin{proof}[Proof of Theorem \ref{thm:wellpos_SDE_Strat} and Lemma \ref{lem:uniq_SDE_Strat}]
The equation \eqref{eq:SDE_Strat} reads in It\^o form:
\begin{align*}
dX = \left(b(X)+\frac12\tr[\sigma'(X)\sigma(X)Q]\right)dt +\sigma(X)dW.
\end{align*}
Hence the results follow from Theorem \ref{thm:wellpos_SDE_Ito} and Lemma \ref{lem:uniq_SDE_Ito} once we show that the above coefficients satisfy the Lipschitz assumptions for the It\^o SDEs. The diffusion coefficient $\sigma$ is $C^{1,1}$ on $\bar{U}$, in particular $\sigma'$ is Lipschitz, hence bounded on $\bar{U}$ because $\bar{U}$ is bounded; therefore $\sigma$ is also Lipschitz on $\bar{U}$ and so bounded, in the $L(E,H)$ topology. Since the $L(E,H)$-topology is stronger than the $L_{2,Q}$ topology, $\sigma$ is also bounded and Lipschitz in the $L_{2,Q}$ topology. The function $b$ is Lipschitz, and so bounded, on $\bar{U}$, by assumption. It remains to show that the It\^o-Stratonovich correction $\tr[\sigma'\sigma Q]$ is Lipschitz on $\bar{U}$. For every $x,y$ in $\bar{U}$, taking $(e_k)_k$ basis of eigenvectors of $Q$ with eigenvalues $\lambda_k$, we have
\begin{align*}
&\|\tr[(\sigma'(x)\sigma(x)-\sigma'(y)\sigma(y)) Q]\|_H \le \sum_k \|[(\sigma'(x)\sigma(x)-\sigma'(y)\sigma(y))Q^{1/2}e_k][Q^{1/2}e_k]\|_H\\
\le& \sum_k \left(\|[(\sigma'(x)-\sigma'(y))\sigma(x)Q^{1/2}e_k\|_{L(E,H)} \right. + \\ & \left. \hphantom{\sum_k (\|[(\sigma'(x)-\sigma'(y))}+\|[\sigma'(y)(\sigma(x)-\sigma(y))Q^{1/2}e_k\|_{L(E,H)}\right) \|Q^{1/2}e_k\|_E\\
\le& \sum_k \left(\|\sigma'(x)-\sigma'(y)\|_{L(H,L(E,H))}\|\sigma(x)Q^{1/2}e_k\|_H + \right.\\ & \left.\hphantom{\sum_k (\|[(\sigma'(x)-\sigma'(y))} +\|\sigma'(y)\|_{L(H,L(E,H))}\|(\sigma(x)-\sigma(y))Q^{1/2}e_k\|_H\right) \|Q^{1/2}e_k\|_E\\
\le& \sum_k \left(\|\sigma'(x)-\sigma'(y)\|_{L(H,L(E,H))}\|\sigma(x)\|_{L(E,H)} + \right. \\ & \left. \hphantom{\sum_k (\|[(\sigma'(x)-\sigma'(y))} +\|\sigma'(y)\|_{L(H,L(E,H))}\|\sigma(x)-\sigma(y)\|_{L(E,H)}\right) \|Q^{1/2}e_k\|_E^2\\
\le& C\|x-y\| \tr[Q],
\end{align*}
where in the last inequality we used that $\sigma$ and $\sigma'$ are Lipschitz and bounded on $\bar{U}$. Hence the It\^o-Stratonovich correction is Lipschitz, and so bounded, on $\bar{U}$. %The proof is complete.
\end{proof}

\begin{rem}\label{rmk:Strat_random_drift}
As for the It\^o SDEs, the definition of solution to a Stratonovich SDE and the existence and uniqueness Theorem \ref{thm:wellpos_SDE_Strat} can be extended to the case of a random drift, under the same assumptions of Remark \ref{rmk:Ito_random_drift}.
\end{rem}

\section{Essentials on spaces of Sobolev maps as infinite-dimensional manifolds}\label{app: Sobolev}

In this appendix we recall the construction of manifolds of mappings. 
Here we follow the classical exposition of \cite{EM70,MR0248880} and recall the relevant constructions.
Let us begin by recalling the construction of Sobolev type sections of vector bundles

\begin{setup}
In addition to our usual requirements we set $d \coloneq \text{dim } K$ and fix a Riemannian metric $g_K$ on $K$. Its associated volume form is denoted by $\mu$.
Now $\pi_E \colon E \rightarrow K$ will be a smooth vector bundle over $K$ together with a Riemannian structure $g_E$ on $E$.
Further, we let $N$ be a finite-dimensional manifold without boundary together with a Riemannian metric $g_N$ and Riemannian exponential map $\exp_N \colon TN  \supseteq \Omega \rightarrow N$. Again $H^s(K,N)$ denotes the space of $H^s$-maps $f\colon K \rightarrow N$ (cf.\ Definition \ref{defn: nonlinear:Sobolev}).
\end{setup}

Albeit we chose auxiliary Riemannian structures, the constructions in this appendix do not depend on the specific choice. In particular, every choice yields the same space of Sobolev sections (see e.g.\ \cite[\S 8]{MR0248880}) we are about to define now.

\subsection*{Sobolev sections of a vector bundle}\addcontentsline{toc}{subsection}{Sobolev sections of a vector bundle}

We recommend the survey in \cite[Appendix B]{MR2030823} on spaces of Sobolev sections. For the readers convenience, the necessary definitions and main results are repeated now.

\begin{setup}[Spaces of Sobolev sections]\label{setup: Sobolevsect}
Denote by $L^2 (E)$ the set of all Borel-measurable sections $X$ of $\pi_E\colon E\rightarrow M$ such that 
\begin{align}
\lVert X\rVert_{L^2} = \left(\int g_E (X(k),X(k)) \mathrm{d} \mu\right)^{\frac{1}{2}} < \infty. \label{L2norm}
\end{align}
Note that $L^2(E)$ neither depends on the choice of $\mu$ nor on the choice of $g_E$ (cf.\ \cite[p.25]{MR0248880}).
For $s\in \N$ we recall from \cite[\S 2]{MR0248880} (cf.\ \cite[\S 1]{MR583436}) that taking (truncated) Taylor expansions in charts gives rise to the $s$-jet bundle $J^s (E)$. Denoting by $\Gamma^s (E)$ the $C^s$-sections of $E$, there is a continuous linear map $j^s \colon \Gamma^s (E) \rightarrow \Gamma^0 (J^s (E))$, the $s$\emph{-jet extension}.\footnote{The mapping $j^s(X)$ sends $X$ to the family of iterated tangent maps $T_k^i X$, for $i\in \N_0, i\leq s$ and $k\in K$.}.
For $s > \frac{d}{2}$ we define now the \emph{space of $H^s (E)$ (Sobolev) sections} as the completion of the space 
$$\Gamma^{s,2}(K,E) \coloneq \{ X \in \Gamma^s (E) \mid j^s (X) \in L^2 (J^s(E))\}$$
with respect to the norm \eqref{L2norm}. If $E=TM$ is the tangent bundle, we write $\mathfrak{X}^s(K) \coloneq H^s (TM)$ for the \emph{space of $H^s$-Sobolev vector fields}. 

It is clear from the construction, that $H^s(E)$ is a Banach space (whose norm sums up the $L^2$-norms \eqref{L2norm} of the jets $j^s(X)$) and even a Hilbert space (whose inner product is induced by the $L^2$ inner product on the jet spaces).
\end{setup}

Contrary to our treatment of $H^s$-morphisms between manifolds (which were defined as being $H^s$ in suitable charts), we defined the Sobolev sections as a completion with respect to an $L^2$-inner product. This immediately establishes the Hilbert space property, but lacks a convenient description in local charts. It is well known \cite[Section 2]{EM70}, \cite[Appendix B]{MR2030823} that instead one can also define Sobolev sections using local trivialisations, being mindful of the warning \ref{warning} we need to take some care in showing that indeed $H^s (E) \subseteq H^s (K,E)$.

\begin{setup}[Localisation in charts]\label{setup: loc:Sob:sect}
 Let $\pi_E \colon E \rightarrow K$ be a vector bundle with typical fibre $F$. Due to the compactness of $K$ we can choose a finite set $\{x_k\} \subseteq K$ together with an atlas $(U_k,\kappa_k)_{1\leq k \leq \ell}$ of charts and an atlas of bundle trivialisation $\{\Psi_k\}$ of $E$ such that 
 \begin{enumerate}
 \item $\overline{U}_k \subseteq \pi_E (\text{dom} \kappa_k)$,
 \item $\kappa_k (U_k) = B_R (\kappa_k (x_k)) \cap \overline{\R}^d_+$,\footnote{Here the closed half space $\overline{\R}^d_+$ in $\R^d$ is needed as we allow $K$ to have smooth boundary, cf.\ \cite{MR2954043}. If $x_k \not \in \partial K$, we have $B_R (\kappa_k (x_k)) \subseteq \overline{\R}^d_+$, while for $x_k \in \partial K$ we may assume $\kappa_k(x_k) = 0$.} where $B_R(\kappa_k (x_k))$ is the open $R$-ball around $\kappa_k (x_k)$, for some $R >0$, 
 \item the operator norms of $\{T^r_u \Psi_k, T^r_u \kappa_k\}_{u\in U_k, 1\leq r \leq s, 1\leq k \leq \ell}$ are uniformly bounded (w.r.t.\ $g_E,g_K$).
 \item for any $1\leq i,j\leq \ell$ the boundaries of $\kappa_i(U_i \cap U_j)$ are piecewise smooth. 
 \end{enumerate}
 To construct such an atlas $(U_k,\kappa_k)$, one easily adapts the construction \cite[Lemma 3.1]{IKT13} (using the mapping $f \colon K \rightarrow \R , x \mapsto 0$, shrinking the charts obtained there to ensure that they are contained in a bundle trivialisation). Note that the cited construction was only carried out in the case of manifolds without boundary. However, the construction carries over to the smooth boundary case, by using a Riemannian metric adapted to the boundary, cf.\ \ref{setup:boundary}.
 
 Denote by $H^s (\kappa_k (U_k), F)$ the completion of $C^\infty (\kappa_k (U_k), F)$\footnote{If $\partial K \neq \emptyset$ the choice of charts and the Whitney extension theorem, cf.\ e.g.\ \cite{1801.04126v4}, entail that smooth functions on $\kappa (U)$ are exactly the restrictions of smooth functions on $B_R (\kappa_k (x_k))$.} with respect to 
\begin{equation}\label{eq: local:norm}
\langle f,g\rangle_{H^s, U_k} \coloneq \sum_{|\alpha| = r \in \N_0, r\leq s} \int_{\kappa_k (U_k)} \langle D^\alpha f(x), D^\alpha g (x)\rangle \, \mathrm{d} x,
\end{equation}
where $D^\alpha$ is the iterated partial derivative with respect to a multiindex $\alpha$.\footnote{Note that $B_R (\kappa(x)) \cap \overline{\R}^d_+$ is non-compact, whence a new definition of the $H^s$-space is needed.} The space $H^s (\kappa_k (U_k),F)$ coincide with the usual $H^s$-Sobolev space, cf.\ \cite[4.2]{MR2453959} and \cite[4.5]{MR1163193}. In the boundary case, we can invoke the Calder\'{o}n extension theorem \cite[\S 3 Theorem 2]{MR0198494} and \cite{MR0324726} to see that $H^s (B_R (0) \cap \overline{\R}^d_+,F)$ also coincides with the usual Sobolev space.

Now \cite[Remark B.1]{MR2030823} implies that $X \in H^s (E)$ if and only if the principal part of the representative $\Psi_k \circ X \circ \kappa_k^{-1}$ is contained in $H^s (\kappa_k (U_k),F)$. Moreover, loc.cit.\ shows that the sum of the inner products \eqref{eq: local:norm} is equivalent to the inner product inducing the Hilbert space structure of $H^s(E)$. As a consequence of \eqref{eq: local:norm}, cf.\ \cite[Eq.\ (69) and Section 4.1]{IKT13},
the mapping 
\begin{equation}\label{eq: loc:iso}
H^s(E) \rightarrow \prod_{1\leq k \leq \ell} H^s (\kappa_k (U_k), F),\quad X \mapsto (\text{pr}_2 \circ \Psi_k \circ X \circ \kappa_k^{-1})_{1\leq k\leq \ell},
\end{equation}
induces a Hilbert space isomorphism of $H^s(E)$ onto its image.
\end{setup}

\begin{lem} \label{lem: char:Sobosect}
For $s > \frac{d}{2}$, $K$ compact (possibly with smooth boundary) and $\pi \colon E \rightarrow K$ a finite rank bundle. We have 
$$H^s (E) = \{ f \in H^s (K,E) \mid \pi \circ f = \id_K\}.$$
Moreover, $H^s(E)$ is separable.
\end{lem}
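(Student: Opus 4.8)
The plan is to reduce both descriptions of the space to one and the same local condition, using the adapted atlas and bundle trivialisations $\{(U_k,\kappa_k,\Psi_k)\}_{1\le k\le \ell}$ constructed in \ref{setup: loc:Sob:sect}, whose transition data have uniformly bounded derivatives (condition (3) there). This bounded geometry is exactly what resolves the chart-dependence issue flagged in Warning \ref{warning}: relative to this fixed atlas the notion of being locally $H^s$ is well defined and coincides with Hilbert-space membership in $H^s(E)$.

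First I would settle the set equality. Let $f\colon K\to E$ satisfy $\pi\circ f=\id_K$. In the bundle trivialisation $\Psi_k$ over $U_k$ (together with the base chart $\kappa_k$), the chart representative of $f$ has the form $x\mapsto (x,g_k(x))$, where $g_k\coloneq \text{pr}_2\circ \Psi_k\circ f\circ \kappa_k^{-1}$; the first component is the identity precisely because $f$ is a section. Since a map into a product is $H^s$ if and only if each component is, and the identity is smooth, the local representative is of class $H^s$ (in the sense of Definition \ref{defn: nonlinear:Sobolev}) if and only if its principal part $g_k$ lies in $H^s(\kappa_k(U_k),F)$. On the other hand, \cite[Remark B.1]{MR2030823}, recalled in \ref{setup: loc:Sob:sect}, states that $f\in H^s(E)$ if and only if every $g_k$ belongs to $H^s(\kappa_k(U_k),F)$. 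The two conditions therefore agree chart by chart, and since the chosen charts cover $K$, this gives $H^s(E)=\{f\in H^s(K,E)\mid \pi\circ f=\id_K\}$.

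For separability I would invoke the Hilbert space isomorphism \eqref{eq: loc:iso} onto its image,
\[
H^s(E)\longrightarrow \prod_{1\le k\le \ell} H^s(\kappa_k(U_k),F),\qquad X\mapsto (g_k)_{1\le k\le \ell}.
\]
Each factor is an ordinary Sobolev space on a bounded (possibly half-)ball $\kappa_k(U_k)\subseteq \overline{\R}^d_+$ with values in the finite-dimensional fibre $F$ — identified with the classical Sobolev space via the Calder\'on extension theorem as in \ref{setup: loc:Sob:sect} — and such spaces are separable. A finite product of separable metric spaces is separable, and any subspace of a separable metric space is again separable; hence $H^s(E)$, being topologically isomorphic to a subspace of this product, is separable.

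The main technical point is the first reduction: one must be certain that membership of a section in the manifold $H^s(K,E)$ is faithfully detected by the principal parts $g_k$ in the chosen trivialisations, and that this matches the completion-based definition of $H^s(E)$. This is exactly where the bounded-geometry atlas of \ref{setup: loc:Sob:sect} and the identification \cite[Remark B.1]{MR2030823} of local representatives with the Hilbert structure \eqref{eq: local:norm} are indispensable; without the uniform control on the transition maps the local and global notions need not coincide, as Warning \ref{warning} cautions.
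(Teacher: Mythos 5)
Your argument for the set equality contains a genuine gap in the inclusion $\{f\in H^s(K,E)\mid \pi\circ f=\id_K\}\subseteq H^s(E)$. You assert that the representative of $f$ in the adapted charts $(\kappa_k,\Psi_k)$ is of class $H^s$ ``(in the sense of Definition \ref{defn: nonlinear:Sobolev}) if and only if its principal part $g_k$ lies in $H^s(\kappa_k(U_k),F)$''. The ``if'' direction is fine: the adapted charts serve as witnessing charts, so $g_k\in H^s$ for all $k$ implies $f\in H^s(K,E)$. But the ``only if'' direction is precisely what needs proof and is not supplied. Definition \ref{defn: nonlinear:Sobolev} only guarantees, for each point, the existence of \emph{some} pair of charts in which the local representative is $H^s$; these witnessing charts need not be the adapted ones. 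Passing from arbitrary witnessing charts to the fixed atlas is exactly the chart-dependence problem of Warning \ref{warning}, and the bounded geometry of the adapted atlas does not resolve it by itself: condition (3) of \ref{setup: loc:Sob:sect} controls the transition maps \emph{within} the fixed atlas, not the transition from an unknown witnessing chart to an adapted one. Your stated justification (``a map into a product is $H^s$ if and only if each component is, and the identity is smooth'') only decomposes the representative once one is already working in the adapted trivialisation; it says nothing about the chart change. Acknowledging in your final paragraph that this is ``the main technical point'' does not close it.

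What is missing is an invariance statement of the following type: a map which is locally of class $H^s$ (in the sense of existence of witnessing charts) localises to $H^s$-mappings in any sufficiently well-adapted cover. This is how the paper argues: given a section $X\in H^s(K,E)$, one restricts the bundle trivialisations to a relatively compact neighbourhood $O$ of the image of $X$, obtaining a \emph{fine cover} in the sense of \cite[Definition 3.2]{IKT13}, and then invokes \cite[Lemma 3.2]{IKT13} to conclude that the principal parts of $X$ in these charts are $H^s$, whence $X\in H^s(E)$ via \cite[Remark B.1]{MR2030823}. Some citation or proof of such a lemma is indispensable for your chart-by-chart reduction to be valid. Your separability argument, on the other hand, is correct, and in fact treats the boundary and boundary-less cases uniformly through the isomorphism \eqref{eq: loc:iso} together with the Calder\'on identification of $H^s\bigl(B_R(\kappa_k(x_k))\cap \overline{\R}^d_+,F\bigr)$ with the classical Sobolev space, whereas the paper handles $\partial K\neq\emptyset$ separately by passing to the double $\tilde{K}$ and using the restriction map; but this correct portion does not repair the gap in the set equality.
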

\begin{proof}
We have already seen in \ref{setup: loc:Sob:sect} that a section is in $H^s(E)$ if and only if localises in charts an $H^s$-Sobolev map. Hence $H^s(E) \subseteq H^s(K,E)$. For the converse take an element $X$ in $H^s(K,E)$ which is also a section of $\pi_E$. Restricting the bundle trivialisations to a relatively compact neighbourhood $O$ of the image of $X$ we obtain a fine cover $((U_k),(\text{dom } \Psi_k \cap O)_k,X)$ of $X$ (see \cite[Definition 3.2]{IKT13}. Then \cite[Lemma 3.2]{IKT13} implies that $X$ localises in these charts to an $H^s$-mapping, whence $X \in H^s (E)$ by the above. This proves the first statement of the Lemma.

To prove that $H^s(E)$ is separable, let us assume first that $\partial K = \emptyset$. Then the isomorphism \eqref{eq: loc:iso} identifies $H^s(E)$ with a subspace of a finite product of Sobolev spaces on open balls in euclidean space. These Sobolev spaces are well known to be separable Hilbert spaces \cite[Theorem 3.6]{MR2424078}, whence $H^s(E)$ is separable as a subspace of a metrizable separable space \cite[4.1.16 Corollary]{Eng89}. 
If $\partial K \neq \emptyset$ we embed $K$ into its double $\tilde{K}$ and note that the smooth vector bundle $E \rightarrow K$ extends to a smooth vector bundle $\tilde{E} \rightarrow \tilde{K}$ by \cite[X: Theorem 5]{MR0198494}. Due to the Calderon extension theorem, the restriction map $H^s (\tilde{E}) \rightarrow H^s (\tilde{E}\mid_K) \cong H^s(E)$ is a continuous surjective mapping \cite[X: Theorem 7 (Restriction theorem)]{MR0198494}. We deduce that $H^s(E)$ is separable as the continuous image of the separable space $H^s(\tilde{E})$ (\cite[1.4.11 Corollary]{Eng89}.
\end{proof}

\begin{rem}\label{rem: IKT}
In \cite{IKT13} Sobolev sections are described via the characterisation in Lemma \ref{lem: char:Sobosect}. This leads to a natural notion of Sobolev mappings allowing to treat fractional Sobolev exponents on manifolds without boundary (see e.g.\ \cite{MR3635359} for a discussion). Though we followed the older approach in \cite{MR0248880}, the proofs of Lemma \ref{lem: char:Sobosect} and the approach in \ref{setup: loc:Sob:sect} follows the characterisation in local charts.
\end{rem}

\subsection*{Manifolds of $H^s$-mappings}\addcontentsline{toc}{subsection}{Manifolds of Sobolev mappings}
We now endow the set $H^s (K,N)$ with a manifold structure. To this end let us first consider spaces of sections covering an $H^s$-map.
\begin{defn}\label{def:sectsp}
Consider $f \in H^s (K,N)$ and define 
$$H^s_f (K,TN) \coloneq \{X \in H^s (K,TN) \mid \pi_N \circ X = f\}.$$
We endow $H^s_f (K,TN)$ with the unique Hilbert space structure turning the obvious bijection $H^s_f (K,TN) \cong H^s (f^*TN)$ into an isomorphism of Hilbert spaces.
\end{defn}

\begin{setup}[Canonical $H^s$-charts]\label{setup: charts}
With the help of the Riemannian exponential mapping (and shrinking $\Omega$ if necessary), we obtain a diffeomorphism
$E \coloneq (\pi_N, \exp_N) \colon TN \supseteq \Omega \rightarrow E(\Omega) \subseteq N \times N$
onto an open neighbourhood $E(\Omega)$ of the diagonal in $N \times N$. Shrinking $\Omega$ we may assume that $E(\Omega)$ is symmetric with respect to interchanging the components of $N\times N$.
Define for $f \in H^s (K,N)$ the set 
$$\mathcal{U}_f \coloneq \{g \in H^s (K,N) \mid (f,g) \in E(\Omega)\}.$$
together with a map  
$$\varphi_f \colon \mathcal{U}_f \rightarrow H^s_f (K,TN),\quad g \mapsto E^{-1} (f,g).$$
Clearly, the inverse of this mapping is 
$$\varphi_f^{-1} \colon H_f^s (K,TN) \supseteq O_f \rightarrow H^s (K,N) , g \mapsto E \circ g,$$
where $O_f = \{g \in H^s_f (K,TN)\mid g (K) \subseteq \Omega\}$. 
We shall now assume that $f\in C^\infty (K,N)$. In light of \ref{setup: loc:Sob:sect}, we can invoke the Sobolev embedding theorem \cite[Corollary after theorem 9.2]{MR0248880} to see that the topology on $H_f^s (K,TN)$ is finer than the topology induced by the compact-open topology via the inclusion $H^s_f(K,TN) \subseteq H^s(K,TN) \subseteq C^0 (K,TN)$. Thus $O_f$ is open in the Hilbert space $H^f_s (K,TN)$. We call $(\varphi_f,\mathcal{U}_f)$ a \emph{canonical chart} around $f \in C^\infty (K,N)$.

We claim now that the domains of the maps $\{\varphi_f\}_{f\in C^\infty (K,N)}$ cover $H^s (K,N)$. To this end recall that, due to our choice of $s$, $H^s(K,N) \subseteq C^0(K,N)$. As smooth mappings are dense in $ C^0(M,N)$ (with respect to the compact open topology), every $C^0$ neighborhood of an $H^s$-map contains a smooth map. Choosing a suitable neighborhood, we find for every $g \in H^s (K,N)$ a suitable $f\in C^\infty (K,N) \cap \mathcal{U}_g$. Since $E(\Omega)$ is symmetric, we deduce that $g \in \mathcal{U}_f$. Since the domains of the charts $\{\varphi_f\}_{f\in C^\infty (K,N)}$ cover $H^s (K,N)$ we can endow $H^s (K,N)$ with the identification topology induced by all canonical charts. 
One can easily check that the identification $H_f^s (K,N) \cong H^s (f^*TN)$ identifies the change of charts $\varphi_g \circ \varphi_f^{-1}$ with the postcomposition $F_* \colon H^s (\Omega_{f,g}) \rightarrow H^s (g^*TN)$, where $F$ is a (smooth) fibre preserving map. Hence for $f,g \in C^\infty (K,N)$ the change of charts is smooth due to \cite[Theorem 13.4]{MR0248880}. In particular, the canonical charts are homeomorphisms onto their image and thus form a $C^\infty$-atlas for the Hilbert manifold $H^s(K,N)$. 

Further, one identifies the smooth curves into the manifold of Sobolev morphisms. This allows one to identify tangent space and the tangent manifold as
\begin{equation}
T_f H^s (K,N) = H^s_f (K,TN) \cong H^s (f^*TN) \quad TH^s (K,N) = H^s (K,TN).\label{tangent:ident}
\end{equation}
Moreover, this construction is compatible with the natural choice of charts. We refer to \cite[Section 2 and Appendix A]{BaHaM19} for a detailed account.
\end{setup}

Note that the manifold topology on $H^s(K,N)$ coincides with the Sobolev $H^s$-topology see \cite{MR1636569,MR1163193}. This is proved for example in \cite[Section 3]{IKT13}. A priori it is not clear from our construction that the manifold topology on $H^s(K,N)$ is Hausdorff. However, it will follow directly from the following lemma:

\begin{lem}\label{lem: Sobolevemb}
For $\ell\geq 0$ the inclusion
$\iota_\ell \colon H^{s+\ell} (K,N) \rightarrow C^\ell (K,N)$ is smooth.
\end{lem}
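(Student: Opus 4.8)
The plan is to reduce the smoothness of $\iota_\ell$ to the elementary fact that a continuous linear map between Banach spaces is $C^\infty$. The mechanism is that the manifold structures on $H^{s+\ell}(K,N)$ (from \ref{setup: charts}) and on $C^\ell(K,N)$ are both built, via canonical charts indexed by smooth maps $f \in C^\infty(K,N)$, from the \emph{same} Riemannian exponential diffeomorphism $E = (\pi_N, \exp_N)$.

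First I would record that, since $s > d/2$, the Sobolev embedding theorem \cite[Corollary after Theorem 9.2]{MR0248880} yields a continuous linear inclusion $H^{s+\ell}(f^*TN) \hookrightarrow C^\ell(f^*TN)$ of the section spaces over any $f \in C^\infty(K,N)$. In particular $\iota_\ell$ is well defined on the level of sets, and each canonical chart domain $\mathcal{U}_f \subseteq H^{s+\ell}(K,N)$ is sent by $\iota_\ell$ into the corresponding $C^\ell$-canonical chart domain, because both are cut out by the identical pointwise condition $(f,g) \in E(\Omega)$.

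Next I would compute the local representative of $\iota_\ell$ in a pair of canonical charts. Writing $\varphi_f$ for the $H^{s+\ell}$-chart and $\varphi_f^{C}$ for the analogous $C^\ell$-chart, both have inverse $X \mapsto \exp_N \circ X$ and both invert via $g \mapsto E^{-1}(f,g)$. Hence in the representative $\varphi_f^{C} \circ \iota_\ell \circ \varphi_f^{-1}$ the two exponential maps cancel and one is left precisely with the inclusion $X \mapsto X$ from the open set $O_f \subseteq H^{s+\ell}(f^*TN)$ into $C^\ell(f^*TN)$, i.e.\ the restriction of the continuous linear embedding from the first step. Continuous linear maps between Banach spaces are smooth, so every local representative of $\iota_\ell$ is smooth; as the canonical charts cover both manifolds and the changes of charts are smooth (\ref{setup: charts}), $\iota_\ell$ is smooth.

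The main (and essentially only) point requiring care is the compatibility of the two atlases used in the middle step: one must check that the $C^\ell$-manifold structure on $C^\ell(K,N)$ is built from the \emph{same} $\Omega$ and $E$ as the Sobolev structure, so that $\varphi_f^{C}$ really does restrict $\varphi_f$ and the exponentials cancel. Once this is noted---together with the fact that the fibrewise constraint $X(K) \subseteq \Omega$ defining $O_f$ is identical in both models---the argument needs no estimates beyond the Sobolev embedding invoked at the outset, and the conclusion is immediate.
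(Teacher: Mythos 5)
Your proposal is correct and follows essentially the same route as the paper: both reduce the claim to the observation that the canonical charts of $C^\ell(K,N)$ and $H^{s+\ell}(K,N)$ are built from the same exponential-map construction, so that $\iota_\ell$ conjugates in charts to the continuous linear (hence smooth) Sobolev inclusion $H^{s+\ell}(f^*TN) \rightarrow C^\ell(f^*TN)$. Your additional care about the two atlases sharing the same $\Omega$ and $E$ is exactly the point the paper settles by citing the construction of canonical charts for $C^\ell(K,N)$.
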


\begin{proof}
The map $\iota_\ell$ makes sense as every $H^{s+\ell}$ map is $C^\ell$ due to the Sobolev embedding theorem.
Now, we just need to note that the canonical charts for $C^\ell (K,N)$ are constructed similarly to the ones for $H^{s+\ell} (K,N)$, the only difference being that they are defined on spaces of $C^\ell$ sections (cf.\ e.g.\ \cite[Appendix A]{1811.02888}). 
Hence the $\iota_\ell$ conjugates in canonical charts to the inclusion $\text{Sob}_\ell \colon H^{s+\ell} (f^*TN) \rightarrow C^\ell (f^*TN)$ which is continuous linear (whence smooth) due to the Sobolev embedding theorem \cite[Corollary after theorem 9.2]{MR0248880}.
\end{proof}

In general, the composition of $H^s$-maps will not yield an $H^s$-map, so composition might be ill defined. To remedy this we will require from now on $s > \frac{d}{2} + 1$ and work instead of all $H^s$-morphisms with the $H^s$-diffeomorphisms:
$$\Diff^s (K) := \{\Phi \in H^s (K,K) \mid \Phi \text{ is bijective and }\Phi^{-1} \in H^s (K,K) \}$$ 
Sticking with our approach we will assume first that $K$ has no boundary. Then we discuss the necessary changes for the general case. This distinction is only relevant for historic reasons, taking a more elaborate approach would yield similar results in the boundary case.\footnote{\cite[l.-7 p.109]{EM70} states that "$H^s(K,K)$ is not a manifold; it has infinite-dimensional corners". However, as \cite{MR583436} proves this presents no problem for $C^k$-maps, and the $H^s$-statement is similar.}

\subsubsection*{Case 1: The underlying manifold has no boundary}

If $K$ has no boundary, one can prove (cf.\ \cite[p.107]{EM70} or \cite{IKT13} for a modern reference) that $\Diff^s (K)$ is an open subset of $H^s (K,K)$ and composition is well-defined and turns $\Diff^s (K)$ into a topological group. 
Moreover, it is known (see \cite{IKT13}) that the composition is differentiable on certain subspaces:

\begin{setup}[Differentiability properties of the composition]\label{setup: HL}
The composition map 
$$\text{Comp} \colon H^{s+\ell} (K,N) \times \Diff^s (K) \rightarrow H^{s} (K,N), (\zeta,\Phi) \mapsto \zeta \circ \Phi$$
is a $C^\ell$-mapping for all $\ell \in \N_0$.
Analysing this further, one can prove that for $\Phi\in \Diff^s (K)$ the right multiplication
$$R_\Phi \colon \Diff^{s}(K) \rightarrow \Diff^s (K),\quad \xi \mapsto \xi \circ \Phi$$
is smooth, and left composition with $\zeta \in H^{s+\ell}(K,N)$, $\ell \in \N$
$$L_\zeta \colon \Diff^s (K) \rightarrow H^{s} (K,N),\quad \Phi \mapsto \zeta \circ \Phi$$
is only a a $C^\ell$-map. Using the identification $TH^s (K,N) \cong H^s (K,TN)$ in \eqref{tangent:ident} the derivatives can be identified as 
\begin{align*}
TR_\Phi (X) = X \circ \Phi,  \qquad TL_\zeta (\eta) = T\zeta \circ \eta = L_{T\zeta} (\eta).
\end{align*}
Thus $\Diff^s (K)$ is only a topological group, but not a Lie group. In particular, the inversion map $\iota \colon \Diff^{s} (K) \rightarrow \Diff^s (K)$ is only continuous. Considering inversion as a mapping $\text{inv} \colon \Diff^{s+\ell}(K) \rightarrow \Diff^s (K)$ it is of class $C^\ell$ with first derivative (for $\ell \geq 1$) given by the formula \cite[p. 108]{EM70} (cf.\ \cite[Proposition 11.13]{MR583436})
$$T_\Phi \text{inv} (\eta) = - (T\Phi^{-1}) \circ \eta \circ \Phi^{-1}, \quad \eta \in T_\Phi \Diff^s (K)$$
The topological group $\Diff^s(K)$ is a Hilbert manifold such that right multiplication is smooth, whence $\Diff^s (K)$ is a so called half Lie group, see \cite{MR3836195}.
\end{setup}

\subsubsection*{Case 2: The underlying manifold has smooth boundary}
We discuss now the case of $K$ having smooth boundary and follow \cite{EM70} (though a direct approach as in \cite{MR583436} yields the same).

\begin{setup}\label{setup:boundary}
 Embed $K$ into its double $\tilde{K}$ \cite[Theorem 6.3]{MR0198479}. We can endow $\tilde{K}$ with a Riemannian metric $\tilde{g}$ such that $\partial K \subseteq \tilde{K}$ becomes a totally geodesic submanifold \cite[Lemma 6.4]{EM70}.
 Constructing the canonical manifold structure on $H^s (K,\tilde{K})$ with respect to $\tilde{g}$, the canonical $H^s$-chart around $\Phi \in \Diff^s (K)$ induces a submanifold chart for $\Diff^s (K)$ mapping the closed subspace
 $$\mathfrak{X}^s_{\Phi, \partial} (K) \coloneq \{X \in H^s (K,T\tilde{K}) \mid \pi \circ X=\Phi, \quad X(k) \in T_k \partial K,\  \forall k \in \partial K\}$$
 to $\Diff^s (K) \subseteq H^s (K,\tilde{K})$ \cite[Lemma 6.6]{EM70}.
 We note the following important facts:
 %\begin{itemize}
 %\item 
 If $K$ has smooth boundary, $\Diff^s (K)$ is a closed submanifold of $H^s (K,\tilde{K})$ and the canonical charts with respect to the Riemannian metric $\tilde{g}$ restrict to submanifold charts. As a consequence a direct calculation shows
 \begin{enumerate}
 \item The differentiability properties of the composition maps from \ref{setup: HL} carry over. In particular, $\Diff^s (K)$ is a half Lie group.
 \item Applying the identification \eqref{tangent:ident} to the composition map on the submanifold $\Diff^s (K)$, the identities from \ref{setup: HL} for the derivatives of the  composition mappings are available.
 \end{enumerate}
 \end{setup}

The point of the above definition is that we can use the Hodge theory developed in \cite[Section 7]{EM70} to translate the Euler equation on manifolds with boundary to our infinite-dimensional setting. To this end consider the group of volume preserving diffeomorphisms (with respect to the volume form $\mu$, which for $K$ with boundary is assumed to be induced by the restriction of the Riemannian metric $\tilde{g}$ in \ref{setup:boundary}).

\begin{setup}[Volume preserving $H^s$ diffeomorphisms]\label{setup: volpres}
The $H^s$-diffeomorphisms for $s > \tfrac{d}{2} +1$ act by pullback on the differential forms on $K$. Hence we can consider the subgroup 
$$\Dmu:= \{\Phi \in \Diff^s (K) \mid \Phi^* \mu = \mu\}$$
of \emph{volume preserving $H^s$-diffeomorphisms}. It is well known that $\Dmu$ is a closed submanifold of $\Diff^s (K)$ \cite[Theorem 4.2 and Theorem 8.1]{EM70}, whence it is a half Lie group and in particular, a topological group. Furthermore, $\Dmu$ is a Hilbert manifold modelled on a separable Hilbert space. We can thus apply the Birkhoff-Kakutani theorem \cite[Theorem 8.3]{HaR79} to see that $\Dmu$ is metrizable. Since the model space is a separable Hilbert space, we see that $\Dmu$ has a connected, separable (whence second countable \cite[Corollary 4.1.16]{Eng89}) open identity neighbourhood $U$. Whence the identity component $\Diff_\mu^s (K)_\circ = \bigcup_{n\in \N} U^n$ \cite[Theorem 7.4]{HaR79} (and thus every component) of $\Dmu$ is second countable, whence separable.\footnote{Our proofs for the separability and metrizability of the manifold $\Dmu$ (resp.\ its components) heavily exploited that $\Dmu$ is a topological group. If one wants to avoid using such arguments, a direct proof using either the strong Riemannian metric (see below) or working with the Sobolev $H^s$-topology on $H^s(K,K)$ will yield similar results.} 
\smallskip

\textbf{The tangent bundle of the volume preserving $H^s$-diffeomorphisms}
As in the finite-dimensional setting we can construct a tangent bundle $T\Dmu$ of $\Dmu$ \cite[III. \S 2]{MR1666820}. Since $\Dmu$ is metrizable, so is $T\Dmu$ (using \cite[Proposition 29.4]{MR1471480}, $T\Dmu$ is paracompact and locally metrizable, whence metrizable by the Smirnov metrization theorem \cite[Ex. 5.4.A]{Eng89}). Moreover, as the identity component $\Dmu_\circ$ is open and separable, the same holds for $T\Dmu_\circ \subseteq T\Dmu$ (Note that $T\Dmu$ is second countable as in any trivialisation $T\Dmu_\circ$ is homeomorphic to a product of second countable spaces (the model space is separable!) and we can cover $T\Dmu$ with a countable cover of such neighbourhoods as $\Dmu$ is second countable.).
The tangent space at the identity can be identified as the space $$\mathfrak{X}^s_\mu(K):= \{ X \in \mathfrak{X}^s(K) \mid \text{div} (X) = 0, X(k) \in T_k \partial K, \ \forall k \in \partial K\}$$ of divergence free $H^s$-vector fields (where we suppress in the notation the condition of being tangential to the boundary). Due to the loss of derivatives in forming the Lie bracket of vector fields, $\mathfrak{X}^s_\mu(K)$ is not a Lie algebra. 
For $s \in \N$ the manifold $\Dmu$ possess a strong right invariant Riemannian metric\footnote{Recall that a Riemannian metric is strong, if it induces the topology on every tangent space. While strong metrics retain properties of finite-dimensional Riemannian metrics, also new phenomena occur, see \cite{MR1666820,MR1330918} for introductions to strong Riemannian metrics in infinite dimensions.}. To this end, one defines an inner product on $\mathfrak{X}^s_\mu(K)$ via
$$\langle X, Y\rangle_{H^s,L} := \int g_K(X, L \circ Y)\mathrm{d} \mu,$$
where $L = (\id + \Delta^s)$ and $\Delta u = (\delta \mathrm{d}u^\flat + \mathrm{d} \delta u^\flat)^\sharp$ is the positive definite Hodge Laplacian. Then 
$$g_{H_s} (X_\varphi ,Y_\varphi) := \langle X_\varphi \circ \varphi^{-1} , Y_\varphi \circ \varphi^{-1}\rangle_{H^s,L}, \quad X_\varphi, Y_\varphi \in T_\varphi \Dmu^s(K)$$ is a smooth right invariant Riemannian metric \cite[p.140]{EM70}. Note that smoothness of the metric is actually quite surprising as the inversion map is only continuous (see \cite[Section 6.1]{MR3635359} for a detailed discussion).
\end{setup}

\addcontentsline{toc}{section}{References}
\bibliography{EM_SPDE}

\end{document}